\newtheorem{theorem}{Theorem}[section]
\newtheorem{condition}[theorem]{Condition}
\newtheorem{corollary}{Corollary}
\newtheorem{criterion}{Criterion}[section]
\newtheorem{definition}{Definition}[section]
\newtheorem{lemma}{Lemma}[section]
\newtheorem{problem}{Problem}[section]
\newtheorem{proposition}{Proposition}[section]
\newenvironment{proof}[1][Proof]{\noindent\textbf{#1.} }{\ \rule{0.5em}{0.5em}}
\begin{document}

\title{Automorphisms of the category of finitely generated free groups of
the some subvariety of the variety of all groups}
\author{R. Barbosa Fernandes \and A. Tsurkov \\
Mathematical Department, CCET,\\
Federal University of Rio Grande do Norte (UFRN),\\
Av. Senador Salgado Filho, 3000,\\
Campus Universit\'{a}rio, Lagoa Nova, \\
Natal - RN - Brazil - CEP 59078-970,\\
ruanbarbosafernandes@gmail.com,\\
arkady.tsurkov@gmail.com}
\maketitle

\begin{abstract}
In universal algebraic geometry the category $\Theta ^{0}$ of the finite
generated free algebras of some fixed variety $\Theta $ of algebras and the
quotient group $\mathfrak{A/Y}$ are very important. Here $\mathfrak{A}$ is a
group of all automorphisms of the category $\Theta ^{0}$ and $\mathfrak{Y}$
is a group of all inner automorphisms of this category.

In the varieties of all the groups, all the abelian groups \cite%
{PlotkinZhitom}, all the nilpotent groups of the class no more then $n$ ($%
n\geq 2$) \cite{TsurkovNilpotent}\ the group $\mathfrak{A/Y}$ is trivial. B.
Plotkin posed a question: "Is there a subvariety of the variety of all the
groups, such that the group $\mathfrak{A/Y}$ in this subvariety is not
trivial?" A. Tsurkov hypothesized that exist some varieties of periodic
groups, such that the groups $\mathfrak{A/Y}$ in these varieties is not
trivial. In this paper we give an example of one subvariety of this
kind.\medskip

\textit{Keywords:} Universal algebraic geometry, category theory,
automorphic equivalence, nilpotent groups, periodic groups.\medskip

Mathematics Subject Classification 2010: 08A99, 08B20, 18A99, 20F18, 20F50.
\end{abstract}

\pagebreak \hangindent=5.5cm \hangafter=0 \noindent \textit{Poetry\newline
is like mining radium.\newline
For every gram\newline
you work a year.\newline
For the sake of a single word\newline
you waste\newline
a thousand tons\newline
of verbal ore.}\newline
V. V. Mayakovsky

\section{Introduction\label{Intr}}

\setcounter{equation}{0}

The paper is devoted to some aspects of universal algebraic geometry, i.e.,
geometry over \textit{universal algebras} (for definition of universal
algebra\textit{\ }see, for example \cite[Chapter 3, 1. 3]{KUROSH}). In fact,
universal algebra is the set with the some list (signature) of operations.
We will say shortly "algebra" instead "universal algebra".

All definitions of the basic notions of the universal algebraic geometry can
be found, for example, in \cite{PlotkinVarCat}, \cite{PlotkinNotions}, \cite%
{PlotkinSame} and \cite{PP}. Also, there are fundamental papers \cite{BMR}, 
\cite{MR} and \cite{DMR2}, \cite{DMR5}.

One of the natural question of universal algebraic geometry is as follows:

\begin{problem}
\label{pr:1} When do two algebras $H_{1}$ and $H_{2}$ from the some variety
of algebras $\Theta $ have the same algebraic geometry?
\end{problem}

Under the sameness of geometries over $H_{1}$ and $H_{2}$ we mean an
isomorphism of the categories of algebraic sets over $H_{1}$ and $H_{2}$,
respectively. So, Problem \ref{pr:1} is ultimately related to the following
one:

\begin{problem}
\label{pr:2} What are the conditions which provide an isomorphism of the
categories of algebraic sets over the algebras $H_{1}$ and $H_{2}$?
\end{problem}

Notions of geometric and automorphic equivalences of algebras play here a
crucial role.

In universal algebraic geometry we consider some variety $\Theta $ of
universal algebras of the signature $\Omega $. We denote by $X_{0}$ an
infinite countable set of symbols. By $\mathfrak{F}\left( X_{0}\right) $ we
denote the set of all finite subsets of $X_{0}$. We will consider the
category $\Theta ^{0}$, whose objects are all free algebras $F\left(
X\right) $ of the variety $\Theta $ generated by finite subsets $X\in 
\mathfrak{F}\left( X_{0}\right) $. Morphisms of the category $\Theta ^{0}$
are homomorphisms of such algebras. We will occasionally denote $F\left(
X\right) =F\left( x_{1},x_{2},\ldots ,x_{n}\right) $ if $X=\left\{
x_{1},x_{2},\ldots ,x_{n}\right\} $.

We consider a system of equations $T\subseteq F\times F$, where $F\in 
\mathrm{Ob}\Theta ^{0}$, and we solve these equations in arbitrary algebra $%
H\in \Theta $.

The set $\mathrm{Hom}\left( F,H\right) $ serves as an affine space over the
algebra $H$: the solution of the system $T$ is a homomorphism $\mu \in 
\mathrm{Hom}\left( F,H\right) $ such that $\mu \left( t_{1}\right) =\mu
\left( t_{2}\right) $ holds for every $\left( t_{1},t_{2}\right) \in T$ or $%
T\subseteq \ker \mu $. $T_{H}^{\prime }=\left\{ \mu \in \mathrm{Hom}\left(
F,H\right) \mid T\subseteq \ker \mu \right\} $ will be the set of all the
solutions of the system $T$. We call these sets \textit{algebraic}, as in
the classical algebraic geometry.

For every set of points $R\subseteq \mathrm{Hom}\left( F,H\right) $ we
consider a congruence of equations defined in this way: $R_{H}^{\prime
}=\bigcap\limits_{\mu \in R}\ker \mu $. This is a maximal system of
equations which has the set of solutions $R$. For every set of equations $T$
we consider its algebraic closure $T_{H}^{\prime \prime
}=\bigcap\limits_{\mu \in T_{H}^{\prime }}\ker \mu $ with respect to the
algebra $H$. A set $T\subseteq F\times F$ is called $H$-closed if $%
T=T_{H}^{\prime \prime }$. An $H$-closed set is always a congruence. We
denote the family of all $H$-closed congruences in $F$ by $Cl_{H}(F)$.

\begin{definition}
Algebras $H_{1},H_{2}\in \Theta $ are \textbf{geometrically equivalent} if
and only if for every $F\in \mathrm{Ob}\Theta ^{0}$ and every $T\subseteq
F\times F$ the equality $T_{H_{1}}^{\prime \prime }=T_{H_{2}}^{\prime \prime
}$ is fulfilled.
\end{definition}

By this definition, algebras $H_{1},H_{2}\in \Theta $ are geometrically
equivalent if and only if the families $Cl_{H_{1}}(F)$ and $Cl_{H_{2}}(F)$
coincide for every $F\in \mathrm{Ob}\Theta ^{0}$.

\begin{definition}
\label{Autom_equiv}\cite{PlotkinSame}We say that \textit{algebras }$%
H_{1},H_{2}\in \Theta $\textit{\ are \textbf{automorphically equivalent} if
there exist an automorphism }$\Phi :\Theta ^{0}\rightarrow \Theta ^{0}$%
\textit{\ and the bijections}%
\begin{equation*}
\alpha (\Phi )_{F}:Cl_{H_{1}}(F)\rightarrow Cl_{H_{2}}(\Phi (F))
\end{equation*}%
for every $F\in \mathrm{Ob}\Theta ^{0}$, \textit{coordinated in the
following sense: if }$F_{1},F_{2}\in \mathrm{Ob}\Theta ^{0}$\textit{, }$\mu
_{1},\mu _{2}\in \mathrm{Hom}\left( F_{1},F_{2}\right) $\textit{, }$T\in
Cl_{H_{1}}(F_{2})$\textit{\ then}%
\begin{equation*}
\tau \mu _{1}=\tau \mu _{2},
\end{equation*}%
\textit{if and only if }%
\begin{equation*}
\widetilde{\tau }\Phi \left( \mu _{1}\right) =\widetilde{\tau }\Phi \left(
\mu _{2}\right) ,
\end{equation*}%
\textit{where }$\tau :F_{2}\rightarrow F_{2}/T$\textit{, }$\widetilde{\tau }%
:\Phi \left( F_{2}\right) \rightarrow \Phi \left( F_{2}\right) /\alpha (\Phi
)_{F_{2}}\left( T\right) $\textit{\ are the natural epimorphisms.}
\end{definition}

The definition of the automorphic equivalence in the language of the
category of coordinate algebras was considered in \cite{PlotkinSame} and 
\cite{TsurkovManySorted}. Intuitively we can say that algebras $%
H_{1},H_{2}\in \Theta $ are automorphically equivalent if and only if the
families $Cl_{H_{1}}(F)$ and $Cl_{H_{2}}(\Phi \left( F\right) )$ coincide up
to a changing of coordinates. This changing is defined by the automorphism $%
\Phi $.

\begin{definition}
\label{inner}An automorphism $\Upsilon $ of an arbitrary category $\mathfrak{%
K}$ is \textbf{inner}, if it is isomorphic as a functor to the identity
automorphism of the category $\mathfrak{K}$.
\end{definition}

It means that for every $F\in \mathrm{Ob}\mathfrak{K}$ there exists an
isomorphism $\sigma _{F}^{\Upsilon }:F\rightarrow \Upsilon \left( F\right) $
such that for every $\mu \in \mathrm{Mor}_{\mathfrak{K}}\left(
F_{1},F_{2}\right) $%
\begin{equation*}
\Upsilon \left( \mu \right) =\sigma _{F_{2}}^{\Upsilon }\mu \left( \sigma
_{F_{1}}^{\Upsilon }\right) ^{-1}
\end{equation*}%
\noindent holds. It is clear that the set $\mathfrak{Y}$ of all inner
automorphisms of an arbitrary category $\mathfrak{K}$ is a normal subgroup
of the group $\mathfrak{A}$ of all automorphisms of this category.

If an inner automorphism $\Upsilon $ provides the automorphic equivalence of
the algebras $H_{1}$ and $H_{2}$, where $H_{1},H_{2}\in \Theta $, then $%
H_{1} $ and $H_{2}$ are geometrically equivalent (see \cite[Proposition 9]%
{PlotkinSame}). Therefore the quotient group $\mathfrak{A/Y}$ measures the
possible difference between the geometric equivalence and automorphic
equivalence of algebras from the variety $\Theta $: if the group $\mathfrak{%
A/Y}$ is trivial, then the geometric equivalence and automorphic equivalence
coincide in the variety $\Theta $. The converse is not true. For example, in
the variety of the all linear spaces over some fixed field $k$ of
characteristic $0$ we have that $\mathfrak{A/Y}\cong \mathrm{Aut}k$, where $%
\mathrm{Aut}k$ is the group of all the automorphisms of the field $k$. The
proof of this fact can be achieved by the method of \cite{ATsurkovLinAlg}.
But all linear spaces over every fixed field $k$ are geometrically
equivalent. This fact is a simple conclusion from \cite[Theorem 3]{PPT}.

In the varieties of all the groups, all the abelian groups \cite%
{PlotkinZhitom}, all the nilpotent groups of the class no more then $n$ ($%
n\geq 2$) \cite{TsurkovNilpotent}\ the group $\mathfrak{A/Y}$ is trivial, so
the geometric equivalence and the automorphic equivalence coincide in these
varieties. B. Plotkin posed a question: "Is there a subvariety of the
variety of all the groups, such that the group $\mathfrak{A/Y}$ in this
subvariety is not trivial?" A. Tsurkov hypothesized that exist some
varieties of periodic groups, such that the groups $\mathfrak{A/Y}$ in these
varieties is not trivial. In this article, we confirm this hypothesis.

We consider a subvariety $\Theta $ of the variety of all groups. Our
subvariety is defined by identities%
\begin{equation}
x^{4}=1,  \label{exponent}
\end{equation}%
\begin{equation}
\left( \left( x_{1},x_{2}\right) ,\left( x_{3},x_{4}\right) \right) =1,
\label{metab}
\end{equation}%
and%
\begin{equation}
\left( \left( \left( \left( x_{1},x_{2}\right) ,x_{3}\right) ,x_{4}\right)
,x_{5}\right) =1,  \label{4nilp}
\end{equation}%
in other words, this is a variety of all nilpotent class no more then $4$,
metabelian and Sanov \cite{Sanov} groups. We will use the method of the
verbal operations elaborated in \cite{PlotkinZhitom} for the calculation of
the quotient group $\mathfrak{A/Y}$ for the variety $\Theta $. In the next
Section we will explain this method.

\section{Method of verbal operations}

\setcounter{equation}{0}

In this section we will explain the method of the verbal operations for the
computing of the quotient group $\mathfrak{A/Y}$ in the case of arbitrary
variety $\Theta $ of universal algebras of the signature $\Omega $. The
reader also can see the explanation and application of this method in \cite%
{PlotkinZhitom}, \cite{TsurAutomEqAlg}, \cite{TsurkovNilpotent}, \cite%
{TsurkovManySorted} and \cite{TsurkovClassicalVar}.

\subsection{First definitions and basic facts}

This method we can apply only if the following condition holds in the
variety $\Theta $:

\begin{condition}
\label{monoiso}\cite{PlotkinZhitom}$\Phi \left( F\left( x\right) \right)
\cong F\left( x\right) $ for every automorphism $\Phi $ of the category $%
\Theta ^{0}$ for every $x\in X_{0}$.
\end{condition}

In this case, by \cite[Theorem 2.1]{TsurkovManySorted}, for every $\Phi \in 
\mathfrak{A}$ there exists a system of bijections%
\begin{equation}
S=\left\{ s_{F}:F\rightarrow \Phi \left( F\right) \mid F\in \mathrm{Ob}%
\Theta ^{0}\right\} ,  \label{bij_system}
\end{equation}%
such that for every $\psi \in \mathrm{Mor}_{\Theta ^{0}}\left( A,B\right) $
the diagram%
\begin{equation*}
\begin{CD} A@>>{s _{A}}>{\Phi \left( A\right)} \\ @VV{\psi}V @V{\Phi \left(
\psi \right)}VV \\ B@>{s _{B}}>>{\Phi \left( B\right)} \\ \end{CD}
\end{equation*}%
\noindent is commutative. It means that $\Phi $\ acts on the morphisms $\psi
:A\rightarrow B$ of $\Theta ^{0}$\ as follows:\textit{\ }%
\begin{equation}
\Phi \left( \psi \right) =s_{B}\psi s_{A}^{-1}.  \label{acting}
\end{equation}

\begin{definition}
\label{conected_toaut}We say that the system of bijections (\ref{bij_system}%
) is a system of bijections \textbf{associated with the automorphism} $\Phi
\in \mathfrak{A}$ if this system fulfills the condition (\ref{acting}).
\end{definition}

One automorphism of the category $\Theta ^{0}$ in general can be associated
with various systems of bijections and some system of bijections can be
associated with various automorphisms.

In \cite{PlotkinZhitom} the notion of the strongly stable automorphism of
the category $\Theta ^{0}$ was defined:

\begin{definition}
\label{str_stab_aut}\textit{An automorphism $\Phi $ of the category }$\Theta
^{0}$\textit{\ is called \textbf{strongly stable} if it satisfies the
conditions:}

\begin{enumerate}
\item $\Phi $\textit{\ preserves all objects of }$\Theta ^{0}$\textit{,}

\item there exists one system of bijections associated with the automorphism 
$\Phi $\textit{\ such that}%
\begin{equation}
s_{F}\mid _{X}=id_{X}  \label{stab_bij}
\end{equation}%
\textit{\ }holds for every $F\left( X\right) \in \mathrm{Ob}\Theta ^{0}$.
\end{enumerate}
\end{definition}

In other words, we can say that an automorphism of the category $\Theta ^{0}$%
\ is called strongly stable if it preserves all objects of $\Theta ^{0}$ and
there is some system of bijections associated with this automorphism such
that all the bijections of this system preserve all generators of domains.

It is clear that the set $\mathfrak{S}$ of all strongly stable automorphisms
of the category $\Theta ^{0}$ is a subgroup of the group $\mathfrak{A}$ of
all automorphisms of this category. By \cite[Theorem 2.3]{TsurkovManySorted}%
, $\mathfrak{A=YS}$ holds if in the category $\Theta ^{0}$ fulfills the
Condition \ref{monoiso}. In this case we have that $\mathfrak{A/Y\cong
S/S\cap Y}$. So to study $\mathfrak{A/Y}$ we must compute the groups $%
\mathfrak{S}$ and $\mathfrak{S\cap Y}$.

\subsection{Strongly stable automorphism and strongly stable system of
bijections\label{automorphism_bijections}}

We consider the strongly stable automorphism $\Phi \in \mathfrak{S}$. There
exists a system of bijections associated with this automorphism which is a
subject of Definition \ref{str_stab_aut}. This system of bijections is
uniquely defined by the automorphism $\Phi $, because the equality $%
s_{A}\left( a\right) =\Phi \left( \alpha \right) \left( x\right) $ holds for
every $A\in \mathrm{Ob}\Theta ^{0}$ and every $a\in A$, where $\alpha
:F\left( x\right) \rightarrow A$ is a homomorphism defined by $\alpha \left(
x\right) =a$ (see \cite[Proposition 3.1]{TsurkovManySorted}). We denote this
system of bijections by $S_{\Phi }$, and its bijections we denote by $%
s_{F}^{\Phi }$ for every $F\in \mathrm{Ob}\Theta ^{0}$.

\begin{definition}
\label{sss}The system of bijections $S=\left\{ s_{F}:F\rightarrow F\mid F\in 
\mathrm{Ob}\Theta ^{0}\right\} $ is called \textbf{strongly stable} if for
every $A,B\in \mathrm{Ob}\Theta ^{0}$ and every $\mu \in \mathrm{Mor}%
_{\Theta ^{0}}\left( A,B\right) $ the mappings $s_{B}\mu s_{A}^{-1}$, $%
s_{B}^{-1}\mu s_{A}:A\rightarrow B$ are homomorphisms and the condition (\ref%
{stab_bij}) are fulfilled.
\end{definition}

The set of all the strongly stable system of bijections we denote by $%
\mathcal{SSSB}$.

It is clear that system of bijections $S_{\Phi }$ is strongly stable. Hence
the mapping $\mathcal{A}:\mathfrak{S}\rightarrow \mathcal{SSSB}$ such that $%
\mathcal{A}\left( \Phi \right) =S_{\Phi }$ is well defined by \cite[%
Proposition 3.1]{TsurkovManySorted}. This mapping is one to one and onto by 
\cite[Proposition 3.2]{TsurkovManySorted}.

If $\Phi _{1},\Phi _{2}\in \mathfrak{S}$ then there are strongly stable
systems of bijections 
\begin{equation*}
\mathcal{A}\left( \Phi _{1}\right) =S_{\Phi _{1}}=\left\{ s_{F}^{\Phi
_{1}}:F\rightarrow F\mid F\in \mathrm{Ob}\Theta ^{0}\right\}
\end{equation*}%
and%
\begin{equation*}
\mathcal{A}\left( \Phi _{2}\right) =S_{\Phi _{2}}=\left\{ s_{F}^{\Phi
_{2}}:F\rightarrow F\mid F\in \mathrm{Ob}\Theta ^{0}\right\} .
\end{equation*}%
For every $\psi \in \mathrm{Mor}_{\Theta ^{0}}\left( F_{1},F_{2}\right) $
the equality $\Phi _{2}\Phi _{1}\left( \psi \right) =s_{F_{2}}^{\Phi
_{2}}s_{F_{2}}^{\Phi _{1}}\psi \left( s_{F_{1}}^{\Phi _{1}}\right)
^{-1}\left( s_{F_{1}}^{\Phi _{2}}\right) ^{-1}$. It means that the system of
bijections%
\begin{equation*}
\left\{ s_{F}^{\Phi _{2}}s_{F}^{\Phi _{1}}:F\rightarrow F\mid F\in \mathrm{Ob%
}\Theta ^{0}\right\}
\end{equation*}%
is associated with the automorphism $\Phi _{2}\Phi _{1}$. But it is clear
that this system is strongly stable, so this system of bijections is
uniquely defined strongly stable system of bijections corresponds to the
strongly stable automorphism $\Phi _{2}\Phi _{1}$, in other words,%
\begin{equation*}
\mathcal{A}\left( \Phi _{2}\Phi _{1}\right) =\left\{ s_{F}^{\Phi
_{2}}s_{F}^{\Phi _{1}}:F\rightarrow F\mid F\in \mathrm{Ob}\Theta
^{0}\right\} .
\end{equation*}

\subsection{Strongly stable system of bijections and applicable systems of
words\label{bijections_words}}

We consider the algebra $F=F\left( x_{1},\ldots ,x_{n}\right) \in \mathrm{Ob}%
\Theta ^{0}$ and take a word (element) $w=w\left( x_{1},\ldots ,x_{n}\right)
\in F\left( x_{1},\ldots ,x_{n}\right) $.

\begin{definition}
The operation $\omega ^{\ast }$: $\omega ^{\ast }\left( h_{1},\ldots
,h_{n}\right) =w\left( h_{1},\ldots ,h_{n}\right) $ is called \textbf{verbal
operation} defined on the algebra $H$ by the word $w$, where\linebreak $%
h_{i}\in H$, $1\leq i\leq n$, and $H\in \Theta $ is an arbitrary algebra of
the variety $\Theta $.
\end{definition}

The reader can compare this definition with the definition of word maps, 
\cite{Se}, \cite{KKP} and references therein.

Denote the signature of our variety $\Theta $ by $\Omega $. For every $%
\omega \in \Omega $ which has an arity $\rho _{\omega }$ we consider the
algebra $F_{\omega }=F\left( x_{1},\ldots ,x_{\rho _{\omega }}\right) \in 
\mathrm{Ob}\Theta ^{0}$. Having a system of words $W=\left\{ w_{\omega }\mid
\omega \in \Omega \right\} $ where $w_{\omega }\in F_{\omega }$, denote by $%
H_{W}^{\ast }$ the algebra which coincides with $H$ as a set, but instead of
the original operations $\left\{ \omega \mid \omega \in \Omega \right\} $ it
possesses the system of the operations $\left\{ \omega ^{\ast }\mid \omega
\in \Omega \right\} $ where $\omega ^{\ast }$ is a verbal operation defined
by word $w_{\omega }$.

{We can consider the algebras $H$ and $H_{W}^{\ast }$ as algebras with the
same signature $\Omega $: the realization of the operation $\omega \in
\Omega $ in the algebra $H$ is the operation $\omega $ and the realization
of the operation $\omega \in \Omega $ in the algebra $H_{W}^{\ast }$ is the
operation $\omega ^{\ast }$. So, if $A$ and $B$ are algebras with the
original operations $\left\{ \omega \mid \omega \in \Omega \right\} $, $%
A_{W}^{\ast }$ and $B_{W}^{\ast }$ are algebras with the operations $\left\{
\omega ^{\ast }\mid \omega \in \Omega \right\} $, we can consider the
homomorphisms from $A$ to $B_{W}^{\ast }$, from $A_{W}^{\ast }$ to $B$ and
so on.}

\begin{definition}
\label{asw}The system of words $W=\left\{ w_{\omega }\mid \omega \in \Omega
\right\} $ is called \textbf{applicable} if $w_{\omega }\left( x_{1},\ldots
,x_{\rho _{\omega }}\right) \in F_{\omega }$ and for every $F=F\left(
X\right) \in \mathrm{Ob}\Theta ^{0}$ there exists an isomorphism $%
s_{F}:F\rightarrow F_{W}^{\ast }$ such that $s_{F}\mid _{X}=id_{X}$.
\end{definition}

The set of all the applicable systems of words we denote by $\mathcal{ASW}$.
This set is never empty. The trivial example of the applicable system of
words, which always exists, give as the system $W=\left\{ w_{\omega }\mid
\omega \in \Omega \right\} $, such that $w_{\omega }=\omega $ for every $%
\omega \in \Omega $.

We suppose that $W=\left\{ w_{\omega }\mid \omega \in \Omega \right\} $ is
an applicable system of words and consider the system of isomorphisms $%
S=\left\{ s_{F}:F\rightarrow F_{W}^{\ast }\mid F\in \mathrm{Ob}\Theta
^{0}\right\} $ mentioned in Definition \ref{asw}. The isomorphism $s_{F}$ as
mapping from algebra $F\in \mathrm{Ob}\Theta ^{0}$ to itself is only a
bijection, which fulfill conditions (\ref{stab_bij}). The mappings $s_{B}\mu
s_{A}^{-1}$, $s_{B}^{-1}\mu s_{A}:A\rightarrow B$ are homomorphisms by \cite[%
Corollary 2 from Proposition 3.4]{TsurkovManySorted} for every $A,B\in 
\mathrm{Ob}\Theta ^{0}$ and every $\mu \in \mathrm{Mor}_{\Theta ^{0}}\left(
A,B\right) $. So $S=\left\{ s_{F}:F\rightarrow F\mid F\in \mathrm{Ob}\Theta
^{0}\right\} $ is a strongly stable\ system of bijections. From \cite[%
Proposition 3.5]{TsurkovManySorted} we conclude that the isomorphisms $%
s_{F}:F\rightarrow F_{W}^{\ast }$ such that (\ref{stab_bij}) holds are
uniquely defined by the system of words $W$. So the system of bijections $S$
is uniquely defined by $W$. We denote this system by $S_{W}$. Therefore the
mapping $\mathcal{B}:\mathcal{ASW\rightarrow SSSB}$ such that $\mathcal{B}%
\left( W\right) =S_{W}$ is well defined. This mapping is one to one and onto
by \cite[Proposition 3.6]{TsurkovManySorted}. In particular, if system of
bijections $S=\left\{ s_{F}:F\rightarrow F\mid F\in \mathrm{Ob}\Theta
^{0}\right\} $ is a strongly stable system of bijections, then a word $%
w_{\omega }$ from the applicable system of words $W=\mathcal{B}^{-1}\left(
S\right) $ we can obtain by the formula%
\begin{equation}
w_{\omega }\left( x_{1},\ldots ,x_{\rho _{\omega }}\right) =s_{F_{\omega
}}\left( \omega \left( x_{1},\ldots ,x_{\rho _{\omega }}\right) \right) \in
F_{\omega },  \label{der_veb_opr}
\end{equation}%
where $\omega \in \Omega $ (see \cite[Susection 2.4]{PlotkinZhitom}, \cite[%
Equation (3.1)]{TsurkovManySorted}).

Now we can conclude \cite[Theorem 3.1]{TsurkovManySorted} that there is one
to one and onto correspondence $\mathcal{C}=\mathcal{B}^{-1}\mathcal{A}:%
\mathfrak{S}\rightarrow \mathcal{ASW}$. We denote $\mathcal{C}\left( \Phi
\right) $ by $W_{\Phi }$. The systems of words $W_{\Phi }$ is defined by
formula (\ref{der_veb_opr}) where bijections $s_{F_{\omega }}=s_{F_{\omega
}}^{\Phi }$ are the corresponding bijections of the system $\mathcal{A}%
\left( \Phi \right) =S_{\Phi }$.

Therefore we can calculate the group $\mathfrak{S}$ if we are able to find
all applicable system of words.

If $\Phi _{1},\Phi _{2}\in \mathfrak{S}$ \ and%
\begin{equation*}
\mathcal{A}\left( \Phi _{1}\right) =S_{\Phi _{1}}=\left\{ s_{F}^{\Phi
_{1}}:F\rightarrow F\mid F\in \mathrm{Ob}\Theta ^{0}\right\} ,
\end{equation*}%
\begin{equation*}
\mathcal{A}\left( \Phi _{2}\right) =S_{\Phi _{2}}=\left\{ s_{F}^{\Phi
_{2}}:F\rightarrow F\mid F\in \mathrm{Ob}\Theta ^{0}\right\}
\end{equation*}%
are strongly stable systems of bijections correspond to automorphisms $\Phi
_{1}$ and $\Phi _{2}$, then as we saw in the previous section, the strongly
stable system of bijections 
\begin{equation*}
\mathcal{A}\left( \Phi _{2}\Phi _{1}\right) =S=\left\{ s_{F}^{\Phi
_{2}}s_{F}^{\Phi _{1}}:F\rightarrow F\mid F\in \mathrm{Ob}\Theta ^{0}\right\}
\end{equation*}%
corresponds to the strongly stable automorphism $\Phi _{2}\Phi _{1}$. Hence,
by (\ref{der_veb_opr}), the applicable systems of words $\mathcal{B}%
^{-1}\left( S\right) =\mathcal{C}\left( \Phi _{2}\Phi _{1}\right) $ we can
obtain by formula%
\begin{equation}
w_{\omega }\left( x_{1},\ldots ,x_{\rho _{\omega }}\right) =s_{F_{\omega
}}^{\Phi _{2}}s_{F_{\omega }}^{\Phi _{1}}\left( \omega \left( x_{1},\ldots
,x_{\rho _{\omega }}\right) \right) ,  \label{der_veb_opr_prod}
\end{equation}%
where $\omega \in \Omega $.

\subsection{Automorphisms, which are strongly stable and inner}

For calculation of the group $\mathfrak{S\cap Y}$ we also have the following

\begin{criterion}
\label{inner_stable}\cite[Lemma 3]{PlotkinZhitom}The strongly stable
automorphism $\Phi $ of the category $\Theta ^{0}$, such that $\mathcal{C}%
\left( \Phi \right) =W_{\Phi }=W$, is inner if and only if for every $F\in 
\mathrm{Ob}\Theta ^{0}$ there exists an isomorphism $c_{F}:F\rightarrow
F_{W}^{\ast }$ such that%
\begin{equation}
c_{B}\psi =\psi c_{A}  \label{commutmor}
\end{equation}%
is fulfilled for every $A,B\in \mathrm{Ob}\Theta ^{0}$ and every $\psi \in 
\mathrm{Mor}_{\Theta ^{0}}\left( A,B\right) $.
\end{criterion}

Also we have

\begin{proposition}
\label{centr_func}\cite[Proposition 23]{GomesMessias}The system of
functions\linebreak $\left\{ c_{A}:A\rightarrow A\mid A\in \mathrm{Ob}\Theta
^{0}\right\} $ fulfills the equality (\ref{commutmor}) for every $A,B\in 
\mathrm{Ob}\Theta ^{0}$ and every $\psi \in \mathrm{Mor}_{\Theta ^{0}}\left(
A,B\right) $ if and only if there exists $c(x)\in F(x)$ such that%
\begin{equation}
c_{A}(a)=c(a),  \label{commutfunc}
\end{equation}%
for every $A\in \mathrm{Ob}\Theta ^{0}$ and every $a\in A$.
\end{proposition}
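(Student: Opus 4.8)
The plan is to observe that a family $\{c_{A}:A\rightarrow A\mid A\in \mathrm{Ob}\Theta ^{0}\}$ satisfies (\ref{commutmor}) precisely when it is a natural transformation of the underlying-set functor $U\colon \Theta ^{0}\rightarrow \mathrm{Set}$ to itself, and then to exploit the fact that $U$ is represented by the one-generator free algebra $F(x)$ (for any fixed $x\in X_{0}$): for every $A\in \mathrm{Ob}\Theta ^{0}$ and every $a\in A$ there is a unique homomorphism $\alpha _{a}\colon F(x)\rightarrow A$ with $\alpha _{a}(x)=a$, and $a\mapsto \alpha _{a}$ identifies $A$ with $\mathrm{Hom}(F(x),A)$. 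Hence the word to produce must be $c(x):=c_{F(x)}(x)\in F(x)$. (One could instead quote the Yoneda lemma directly, but I would keep the argument elementary and self-contained.) I would then prove the two implications separately.

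For the ``if'' direction I would argue as follows. Suppose $c(x)\in F(x)$ is given and $c_{A}$ is the verbal map $c_{A}(a)=c(a)$, i.e. $c_{A}(a)$ is the image of $c(x)$ under the substitution homomorphism $\alpha _{a}$. Since every homomorphism commutes with verbal operations, it commutes with the word map defined by $c$; so for any $\psi \in \mathrm{Mor}_{\Theta ^{0}}(A,B)$ and any $a\in A$ one gets $\psi (c_{A}(a))=\psi (c(a))=c(\psi (a))=c_{B}(\psi (a))$, which is exactly (\ref{commutmor}). This direction is routine.

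For the ``only if'' direction I would set $c(x):=c_{F(x)}(x)$ and then, for an arbitrary $A\in \mathrm{Ob}\Theta ^{0}$ and $a\in A$, apply (\ref{commutmor}) to the morphism $\psi =\alpha _{a}\colon F(x)\rightarrow A$, $x\mapsto a$, and evaluate both sides at the generator $x\in F(x)$. This yields $c_{A}(a)=c_{A}(\alpha _{a}(x))=\alpha _{a}(c_{F(x)}(x))=\alpha _{a}(c(x))=c(a)$, which is precisely (\ref{commutfunc}).

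As for the main obstacle: there is in fact little to grind through. The two delicate points are purely formal --- that $F(x)$ really is an object of $\Theta ^{0}$ (it is, since $\{x\}\in \mathfrak{F}(X_{0})$) and that $\alpha _{a}$ really is a morphism of $\Theta ^{0}$ (it is, since the free algebra $F(x)$ admits a substitution homomorphism into every algebra of $\Theta $, and $A\in \mathrm{Ob}\Theta ^{0}\subseteq \Theta $). The conceptual content --- that testing the family on the single-generator free algebra already pins it down on every object --- is essentially the whole statement, and once this is seen the computation is a one-line diagram chase. So I would expect no step to be a genuine obstacle; the work lies only in phrasing the representability remark cleanly.
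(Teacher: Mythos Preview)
Your proof is correct and follows essentially the same route as the paper's: both directions are handled identically, defining $c(x):=c_{F(x)}(x)$ and using the substitution homomorphism $\alpha_a\colon F(x)\to A$ to transport the naturality condition. The only difference is your added conceptual framing (natural transformation, representability, Yoneda), which the paper omits in favor of the bare computation.
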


\begin{proof}
We consider $c(x)\in F(x)$ and define the system of functions\linebreak $%
\left\{ c_{A}:A\rightarrow A\mid A\in \mathrm{Ob}\Theta ^{0}\right\} $ by (%
\ref{commutfunc}). We have for every $\psi \in \mathrm{Mor}_{\Theta
^{0}}\left( A,B\right) $ and every $a\in A$ that the equality $\psi
c_{A}\left( a\right) =\psi \left( c\left( a\right) \right) =c\left( \psi
\left( a\right) \right) =c_{B}\psi \left( a\right) $ holds, because $\psi
\in \mathrm{Mor}_{\Theta ^{0}}\left( A,B\right) $.

We suppose that exists a system of functions $\left\{ c_{A}:A\rightarrow
A\mid A\in \mathrm{Ob}\Theta ^{0}\right\} $ which fulfills equality (\ref%
{commutmor}) for every $A,B\in \mathrm{Ob}\Theta ^{0}$ and every $\psi \in 
\mathrm{Mor}_{\Theta ^{0}}\left( A,B\right) $. We consider the algebra $%
F=F(x)\in \mathrm{Ob}\Theta ^{0}$. There exists $c(x)=c_{F}(x)\in F(x)$. For
every $A\in \mathrm{Ob}\Theta ^{0}$ and every $a\in A$ we can consider the
homomorphism $\alpha _{a}:F(x)\rightarrow A$, such that $\alpha _{a}\left(
x\right) =a$. Therefore $c_{A}(a)=c_{A}(\alpha _{a}\left( x\right) )=\alpha
_{a}\left( c_{F}(x)\right) =\alpha _{a}\left( c(x)\right) =c(a)$.
\end{proof}

\section{Application of the method of verbal operations}

\setcounter{equation}{0}

We consider every group as universal algebra with signature which has $3$
operations:%
\begin{equation*}
\Omega =\left\{ 1,-1,\cdot \right\} ,
\end{equation*}%
where the $0$-ary operation $1$ give us an unit of a group, the $1$-ary
operation $-1$ give us for an arbitrary element $g$ of a group $G$ the
inverse element $g^{-1}$ and the $2$-ary operation $\cdot $ give us for two
elements of a group $G$ its product.

The IBN (invariant basis number) property or invariant dimension property
was defined initially in the theory of rings and modules, see, for example, 
\cite[Definition 2.8]{Hungerford}. But then this concept was generalized to
arbitrary varieties of algebras:

\begin{definition}
\label{IBN}We say that the variety $\Theta $ has an \textbf{IBN property} if
for every $F_{\Theta }\left( X\right) ,F_{\Theta }\left( Y\right) \in 
\mathrm{Ob}\Theta ^{0}$ the $F_{\Theta }\left( X\right) \cong F_{\Theta
}\left( Y\right) $ holds if and only if $\left\vert X\right\vert =\left\vert
Y\right\vert $.
\end{definition}

By \cite{Fujiwara} our variety $\Theta $ has an IBN property. It is easy to
conclude from this fact that in the variety $\Theta $ the Condition \ref%
{monoiso} fulfills. So, the method of verbal operations is valid in our
variety.

Thus the strategy of our research is clear. First of all we will compute the 
$2$-generated free group of our variety $F_{\Theta }\left( x,y\right) $.

After that we will find all applicable system of words%
\begin{equation}
W=\left\{ w_{1},w_{-1}\left( x\right) ,w_{\cdot }\left( x,y\right) \right\} ,
\label{syst_words}
\end{equation}%
where $w_{1}$ is a constant which correspond to the $0$-ary operation $1$, $%
w_{-1}\left( x\right) \in F_{\Theta }\left( x\right) $ is a word which
correspond to the $1$-ary operation $-1$ and $w_{\cdot }\left( x,y\right)
\in F_{\Theta }\left( x,y\right) $ is a word which correspond to the $2$-ary
operation "$\cdot $". We will use the Definition \ref{asw} for the finding
of the applicable system of words. The necessary conditions for the system
of words to be applicable we will conclude from the fact that the
isomorphism $s_{F}:F\rightarrow F_{W}^{\ast }$, which exists for every $F\in 
\mathrm{Ob}\Theta ^{0}$, provide the fulfilling of all identities of the
variety $\Theta $ in the groups $F_{W}^{\ast }$. It will give us $4$ systems
of words of the form (\ref{syst_words}), which can be applicable.

In the next step of our research we will prove that all these systems of
words are applicable. We will prove that for all these system $W$ all
identities of the variety $\Theta $ really fulfill in the groups $%
F_{W}^{\ast }$ for every $F\in \mathrm{Ob}\Theta ^{0}$. This will allow us
to construct the homomorphism $s=s_{F\left( X\right) }:F\left( X\right)
\rightarrow \left( F\left( X\right) \right) _{W}^{\ast }$, such that $%
s_{\mid X}=id_{X}$ for every $F\left( X\right) \in \mathrm{Ob}\Theta ^{0}$.
After that we will find the inverse maps for every $s_{F\left( X\right) }$.
It allow to conclude that all homomorphisms $s_{F\left( X\right) }$ are
isomorphisms and all $4$ considered systems of words are applicable and
provide the strongly stable automorphisms of the category $\Theta ^{0}$.

And we will finish our research when we will compute for the category $%
\Theta ^{0}$ the group $\mathfrak{Y}\cap \mathfrak{S}$ by Criterion\textbf{\ 
}\ref{inner_stable} and Proposition \ref{centr_func}. We will see in the end
of our research that the group $\mathfrak{A}/\mathfrak{Y}$ of the category $%
\Theta ^{0}$ contains $2$ elements.

\section{Some properties of the varieties $\mathfrak{N}_{4}$ and $\Theta $}

\setcounter{equation}{0}

In this paper $\mathfrak{N}_{4}$ is the variety of nilpotent groups of class
no more then $4$. The free groups of this variety, generated by generators $%
x_{1},\ldots ,x_{n}$ we will denote by $N_{4}\left( x_{1},\ldots
,x_{n}\right) $.

We will denote $\left( \left( \left( \left( x,y\right) ,z\right) ,\ldots
\right) ,t\right) $ as $\left( x,y,z,\ldots ,t\right) $. Also we will denote
for every group $G$ the $\gamma _{1}\left( G\right) =G$ and $\gamma
_{i+1}\left( G\right) =\left( \gamma _{i}\left( G\right) ,G\right) $. And we
will denote by $Z\left( G\right) $ the center of the group $G$.

In our computation we will frequently use the identities%
\begin{equation}
(xy,z)=(x,z)^{y}(y,z)=(x,z)(x,z,y)(y,z),  \label{l_d}
\end{equation}%
\begin{equation}
(x,yz)=(x,z)(x,y)^{z}=(x,z)(x,y)(x,y,z),  \label{r_d}
\end{equation}%
\begin{equation}
(x^{-1},y)=(y,x)^{x^{-1}}=(x,y)^{-1}(y,x,x^{-1})  \label{i_d}
\end{equation}%
which fulfill in every group (see \cite[(10.2.1.2) and (10.2.1.3)]{Hall} and 
\cite[p. 20, (3)]{KM}). From these identities we can conclude these facts
about arbitrary group $G\in \mathfrak{N}_{4}$:

\begin{enumerate}
\item for every $g_{1},g_{2}\in G$ and every $l_{1},l_{2}\in \gamma
_{4}\left( G\right) $%
\begin{equation}
\left( g_{1}l_{1},g_{2}l_{2}\right) =\left( g_{1},g_{2}\right) ,
\label{g4out}
\end{equation}

\item for every $g\in G$ and every $l_{1},l_{2}\in \gamma _{2}\left(
G\right) $%
\begin{equation}
\left( l_{1}l_{2},g\right) =\left( l_{1},g\right) \left( l_{2},g\right) ,%
\hspace{0.2in}\left( g,l_{1}l_{2}\right) =\left( g,l_{2}\right) \left(
g,l_{1}\right) ,  \label{l_d_r_d}
\end{equation}

\item for every $g_{1},g_{2},g_{3}\in G$ and every $l_{1},l_{2},l_{3}\in
\gamma _{3}\left( G\right) $%
\begin{equation}
\left( g_{1}l_{1},g_{2}l_{2},g_{3}l_{3}\right) =\left(
g_{1},g_{2},g_{3}\right) ,  \label{g3out}
\end{equation}

\item for every $g_{1},g_{2},g_{3},g_{4}\in G$ and every $%
l_{1},l_{2},l_{3},l_{4}\in \gamma _{2}\left( G\right) $%
\begin{equation}
\left( g_{1}l_{1},g_{2}l_{2},g_{3}l_{3},g_{4}l_{4}\right) =\left(
g_{1},g_{2},g_{3},g_{4}\right) ,  \label{g2out}
\end{equation}

\item every commutator of the length $4$ is a multiplicative function by all
its $4$ arguments:%
\begin{equation}
w\left( g_{1},\ldots ,g_{i}l_{i},\ldots ,g_{4}\right) =w\left( g_{1},\ldots
,g_{i},\ldots ,g_{4}\right) w\left( g_{1},\ldots ,l_{i},\ldots ,g_{4}\right)
,  \label{g4_powers}
\end{equation}%
where $w\left( x_{1},\ldots ,x_{4}\right) \in \gamma _{4}\left( N_{4}\left(
x_{1},\ldots ,x_{4}\right) \right) $, $1\leq i\leq 4$, holds for every $%
g_{1},\ldots ,g_{4},l_{i}\in G$.
\end{enumerate}

For every $G\in \mathfrak{N}_{4}$ we have that $\gamma _{4}\left( G\right)
\subseteq Z\left( G\right) $ and $\gamma _{5}\left( G\right) =\left\{
1\right\} $. For every $G\in \Theta $ the group $\gamma _{2}\left( G\right) $
is an abelian group. We will use these facts later in our computations
without special reminder. Also we use the identity $yx=xy(y,x)$, which
fulfills in an every group, and the identity (\ref{exponent}) which fulfills
in an every group of the variety $\Theta $ without special reminder.

In this subsection we will describe the free group of our variety $\Theta $
generated by $2$ generators. This group is a quotient group $N_{4}\left(
x,y\right) /T$, where $T$ is a normal subgroup of the identities with two
variables of the subvariety $\Theta $ in the variety $\mathfrak{N}_{4}$.

By \cite[Theorem 17.2.2]{KM}, if $G$ is finitely generated nilpotent group
then there exist central (in particular, normal) series:

\begin{equation*}
G=G_{1}>G_{2}>...>G_{s}>G_{s+1}=\left\{ 1\right\}
\end{equation*}%
such that $G_{i}/G_{i+1}=\left\langle a_{i}G_{i+1}\right\rangle $ ($%
\Longleftrightarrow $ $G_{i}=\left\langle a_{i},G_{i+1}\right\rangle $), $%
a_{i}\in G_{i}$. \newline
$\left\langle a_{i}G_{i+1}\right\rangle \cong 
\mathbb{Z}
_{n}$ ($n\geq 2$), or $\left\langle a_{i}G_{i+1}\right\rangle \cong 
\mathbb{Z}
$. Therefore every $g\in G$ can be uniquely represented in the form $%
g=a_{1}^{\alpha _{1}}a_{2}^{\alpha _{2}}...a_{s}^{\alpha _{s}}$, where $%
0\leq \alpha _{i}<n$, when $\left\langle a_{i}G_{i+1}\right\rangle \cong 
\mathbb{Z}
_{n}$, and $\alpha _{i}\in 
\mathbb{Z}
$, when $\left\langle a_{i}G_{i+1}\right\rangle \cong 
\mathbb{Z}
$.

\begin{definition}
We say that the set $\left\{ a_{1},a_{2},...,a_{s}\right\} $ is \textbf{a
base} of the group $G$ and numbers $\alpha _{1},\alpha _{2},...,\alpha _{s}$
are \textbf{coordinates} of the element $g$ in this base.
\end{definition}

The base of $N_{4}\left( x,y\right) $ we can denote by%
\begin{equation}
C_{1}=x,C_{2}=y,C_{3}=(y,x),C_{4}=\left( y,x,y\right) ,C_{5}=\left(
y,x,x\right) ,  \label{baseN42}
\end{equation}%
\begin{equation*}
C_{6}=\left( y,x,x,x\right) ,C_{7}=(y,x,y,y),C_{8}=(y,x,y,x).
\end{equation*}%
This is a base of Shirshov, which we can compute by the algorithm explained
in \cite[2.3.5]{Bahturin}.

In particular, if we substitute in \cite[10.2.1.4]{Hall} $\left( y,x\right) $
instead $x$ and $x$ instead $z$, we obtain%
\begin{equation*}
\left( \left( y,x\right) ,y^{-1},x\right) ^{y}\left( y,x^{-1},\left(
y,x\right) \right) ^{x}\left( x,\left( y,x\right) ^{-1},y\right) ^{\left(
y,x\right) }=1.
\end{equation*}%
So, by (\ref{g4_powers}), we can conclude that%
\begin{equation*}
\left( \left( y,x\right) ,y,x\right) ^{-1}\left( y,x,\left( y,x\right)
\right) ^{-1}\left( x,\left( y,x\right) ,y\right) ^{-1}=1.
\end{equation*}%
We have that%
\begin{equation*}
\left( y,x,\left( y,x\right) \right) =\left( \left( y,x\right) ,\left(
y,x\right) \right) =1,
\end{equation*}%
and%
\begin{equation*}
\left( x,\left( y,x\right) ,y\right) =\left( \left( x,\left( y,x\right)
\right) ,y\right) =\left( \left( \left( y,x\right) ,x\right) ^{-1},y\right)
=\left( y,x,x,y\right) ^{-1},
\end{equation*}%
hence%
\begin{equation*}
\left( y,x,y,x\right) ^{-1}\left( y,x,x,y\right) =1
\end{equation*}%
and%
\begin{equation}
\left( y,x,y,x\right) =\left( y,x,x,y\right) =C_{8}.  \label{C8}
\end{equation}

\begin{proposition}
\label{collect_formula_applic}The identity%
\begin{equation}
(xy)^{4}=x^{4}y^{4}(y,x)^{6}(y,x,y)^{14}(y,x,y,y)^{11}(y,x,x)^{4}(y,x,x,y)^{11}(y,x,x,x),
\label{collect_formula}
\end{equation}%
fulfills in the variety $\mathfrak{N}_{4}$.
\end{proposition}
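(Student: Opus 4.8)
The plan is to carry out P.\ Hall's collection process on $(xy)^{4}=xyxyxyxy$, first pushing the four $x$'s to the front and then gathering the four $y$'s, always working modulo $\gamma_{5}(G)=\{1\}$. The starting point is the purely group-theoretic telescoping identity
\begin{equation*}
(xy)^{n}=x^{n}\left( x^{-(n-1)}yx^{n-1}\right) \left( x^{-(n-2)}yx^{n-2}\right) \cdots \left( x^{-1}yx\right) y ,
\end{equation*}
which one checks by collapsing the right-hand side (adjacent powers of $x$ cancel to single $x$'s). For $n=4$ this reads $(xy)^{4}=x^{4}\cdot y^{x^{3}}y^{x^{2}}y^{x}y$, where $y^{x^{k}}:=x^{-k}yx^{k}=y\left( y,x^{k}\right) $; so it remains to expand the four conjugates in the base (\ref{baseN42}) and bring the $y$'s together.

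Next I would prove, by induction on $k$ and using (\ref{r_d}), that in $\mathfrak{N}_{4}$
\begin{equation*}
\left( y,x^{k}\right) =\left( y,x\right) ^{k}\left( y,x,x\right) ^{\binom{k}{2}}\left( y,x,x,x\right) ^{\binom{k}{3}} .
\end{equation*}
The induction uses only the exact conjugation formulas $\left( y,x\right) ^{x}=\left( y,x\right) \left( y,x,x\right) $, $\left( y,x,x\right) ^{x}=\left( y,x,x\right) \left( y,x,x,x\right) $ and $\left( y,x,x,x\right) ^{x}=\left( y,x,x,x\right) $ (the last since $\gamma_{4}\subseteq Z$ and $\gamma_{5}=\{1\}$), the fact that $\left( y,x\right) ,\left( y,x,x\right) ,\left( y,x,x,x\right) $ pairwise commute (as $[\gamma_{i}(G),\gamma_{j}(G)]\subseteq \gamma_{i+j}(G)$ and $\gamma_{5}=\{1\}$), and the binomial recursions $k+\binom{k}{2}=\binom{k+1}{2}$, $\binom{k}{2}+\binom{k}{3}=\binom{k+1}{3}$. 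Substituting $k=3,2,1,0$ turns $y^{x^{3}}y^{x^{2}}y^{x}y$ into
\begin{equation*}
\left( y\,(y,x)^{3}(y,x,x)^{3}(y,x,x,x)\right) \left( y\,(y,x)^{2}(y,x,x)\right) \left( y\,(y,x)\right) y .
\end{equation*}

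The final step is a single uniform pass moving the four $y$'s to the left. Using $wy=y\,w^{y}$ repeatedly, the block $(y,x^{k})$ coming from $y^{x^{k}}=y(y,x^{k})$ ends up conjugated by $y^{k}$, so the product above equals
\begin{equation*}
y^{4}\cdot \left( (y,x)^{3}(y,x,x)^{3}(y,x,x,x)\right) ^{y^{3}}\left( (y,x)^{2}(y,x,x)\right) ^{y^{2}}(y,x)^{y} .
\end{equation*}
Now expand each conjugate by the exact identities $(y,x)^{y}=(y,x)(y,x,y)$, $(y,x,y)^{y}=(y,x,y)(y,x,y,y)$, $(y,x,x)^{y}=(y,x,x)(y,x,x,y)$, and $(y,x,y,y)^{y}=(y,x,y,y)$, $(y,x,x,y)^{y}=(y,x,x,y)$, $(y,x,x,x)^{y}=(y,x,x,x)$ (the weight-$4$ terms being central, no weight-$5$ corrections since $\gamma_{5}=\{1\}$). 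Every commutator that appears then lies in the pairwise-commuting set $\{(y,x),(y,x,x),(y,x,y),(y,x,y,y),(y,x,x,y),(y,x,x,x)\}=\{C_{3},C_{5},C_{4},C_{7},C_{8},C_{6}\}$, so collecting reduces to adding exponents: the exponent of $(y,x)$ is $\sum_{k=1}^{3}k=6$, and likewise one obtains $(y,x,y)^{14}$, $(y,x,x)^{4}$, $(y,x,y,y)^{11}$, $(y,x,x,y)^{11}$, $(y,x,x,x)^{1}$; prefixing $x^{4}$ and reordering the commuting factors gives exactly (\ref{collect_formula}). Note that $(y,x,x,y)$ occurs directly rather than $(y,x,y,x)$, so (\ref{C8}) is not actually invoked here; it would be needed only to reconcile the two if a different collection order were chosen.

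The argument carries no conceptual content — the whole difficulty is bookkeeping. The one real obstacle is keeping the weight-$3$ and weight-$4$ exponents straight through the two collections (past the $x^{4}$ block, then gathering the $y$'s), where a single stray commutator is easy to misplace; organising everything around the closed form for $\left( y,x^{k}\right) $ and one clean pass for the $y$'s keeps the binomial coefficients $\binom{k}{2},\binom{k}{3}$ visible and the remaining arithmetic routine.
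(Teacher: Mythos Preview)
Your argument is correct and takes a genuinely different route from the paper's. The paper proceeds iteratively: it computes $(xy)^{2}$, then $(xy)^{3}=(xy)^{2}(xy)$, then $(xy)^{4}=(xy)(xy)^{3}$, at each stage performing an ad hoc collection (computing auxiliary pieces like $y^{2}x$, $yx^{3}$, $(y,x)^{3}y^{3}$ by hand) and invoking (\ref{C8}) to convert the $(y,x,y,x)$ terms that arise into $(y,x,x,y)$. Your approach is the systematic Hall collection in one shot: the telescoping identity $(xy)^{4}=x^{4}\,y^{x^{3}}y^{x^{2}}y^{x}y$, the closed form $(y,x^{k})=(y,x)^{k}(y,x,x)^{\binom{k}{2}}(y,x,x,x)^{\binom{k}{3}}$, and then a single pass moving the four $y$'s left so that each block $(y,x^{k})$ is conjugated by exactly $y^{k}$. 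Because your collection order never produces $(y,x,y,x)$, you are right that (\ref{C8}) is not needed. The payoff of your method is transparency: the final exponents are visible sums of $k$, $k^{2}$, $k\binom{k}{2}$, $\binom{k}{2}$, $\binom{k}{3}$ over $k=1,2,3$, which immediately give $6,14,11,4,11,1$. The paper's stepwise computation is more self-contained (no induction lemma to state) but harder to audit; yours is shorter and makes the binomial structure explicit.
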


\begin{proof}
We will consider the group $G\in \mathfrak{N}_{4}$ and $x,y\in G$.

Initially we go to compute $(xy)^{2}$. We have that%
\begin{equation*}
(xy)^{2}=xyxy=x^{2}y(y,x)y=x^{2}y^{2}(y,x)(y,x,y)\text{.}
\end{equation*}

After this we compute $(xy)^{3}$ by same method:\newline
\begin{equation*}
(xy)^{3}=(xy)^{2}\left( xy\right) =x^{2}y^{2}(y,x)(y,x,y)xy=
\end{equation*}%
\begin{equation*}
x^{2}y^{2}x(y,x)(y,x,x)(y,x,y)(y,x,y,x)y\newline
.
\end{equation*}%
Now we will compute $y^{2}x$:%
\begin{equation*}
y^{2}x=y\left( yx\right) =yxy(y,x)=xy(y,x)y(y,x)=
\end{equation*}%
\begin{equation}
xy^{2}(y,x)(y,x,y)(y,x)=xy^{2}(y,x)^{2}(y,x,y),  \label{y^2x}
\end{equation}%
because elements of $\gamma _{2}\left( G\right) $ commute with elements of $%
\gamma _{3}\left( G\right) $ in every $G\in \mathfrak{N}_{4}$. Hence, by (%
\ref{y^2x}) and (\ref{C8}), we have the equality%
\begin{equation*}
(xy)^{3}=x^{3}y^{2}(y,x)^{3}(y,x,y)^{2}(y,x,x)(y,x,y,x)y=
\end{equation*}%
\begin{equation}
x^{3}y^{3}(y,x)^{3}(y,x,y)^{3}(y,x,y)^{2}(y,x,y,y)^{2}(y,x,x)(y,x,x.y)(y,x,y,x)=
\label{x^3y^3}
\end{equation}%
\begin{equation*}
x^{3}y^{3}(y,x)^{3}(y,x,y)^{5}(y,x,x)(y,x,y,y)^{2}(y,x,x,y)^{2}.
\end{equation*}

Now we will compute $(xy)^{4}$. By (\ref{x^3y^3}) we have that%
\begin{equation}
(xy)^{4}=xy(xy)^{3}=xyx^{3}y^{3}(y,x)^{3}(y,x,y)^{5}(y,x,x)(y,x,y,y)^{2}(y,x,y,x)^{2}
\label{(xy)4}
\end{equation}%
After this we $\bigskip $can compute that%
\begin{equation*}
yx^{3}=\left( yx\right) x^{2}=xy(y,x)x^{2}=x^{2}y(y,x)^{2}(y,x,x)x=
\end{equation*}%
\begin{equation*}
x^{3}y\left( y,x\right) ^{3}\left( y,x,x\right) ^{3}\left( y,x,x,x\right) ,
\end{equation*}%
therefore%
\begin{equation}
xyx^{3}y^{3}=x^{4}y\left( y,x\right) ^{3}\left( y,x,x\right) ^{3}\left(
y,x,x,x\right) y^{3}.  \label{xyx^3y^3_1}
\end{equation}%
We have that%
\begin{equation}
\left( y,x,x,x\right) y^{3}=y^{3}\left( y,x,x,x\right) .  \label{(y,x,x,x)y3}
\end{equation}%
Also we can compute that 
\begin{equation*}
\left( y,x,x\right) ^{3}y^{3}=y\left( y,x,x\right) ^{3}\left( y,x,x,y\right)
^{3}y^{2}=y\left( y,x,x\right) ^{3}y^{2}\left( y,x,x,y\right) ^{3}=
\end{equation*}%
\begin{equation}
y^{2}\left( y,x,x\right) ^{3}y\left( y,x,x,y\right) ^{6}=y^{3}\left(
y,x,x\right) ^{3}\left( y,x,x,y\right) ^{9}  \label{(y,x,x)3y3}
\end{equation}%
and%
\begin{equation*}
\left( y,x\right) ^{3}y^{3}=y\left( y,x\right) ^{3}\left( y,x,y\right)
^{3}y^{2}=
\end{equation*}%
\begin{equation*}
y^{2}\left( y,x\right) ^{3}\left( y,x,y\right) ^{3}\left( y,x,y\right)
^{3}\left( y,x,y,y\right) ^{3}y=
\end{equation*}%
\begin{equation}
y^{2}\left( y,x\right) ^{3}\left( y,x,y\right) ^{6}\left( y,x,y,y\right)
^{3}y=  \label{(y,x)3y3}
\end{equation}%
\begin{equation*}
y^{3}\left( y,x\right) ^{3}\left( y,x.y\right) ^{3}\left( y,x,y\right)
^{6}\left( y,x,y,y\right) ^{6}\left( y,x,y,y\right) ^{3}=
\end{equation*}%
\begin{equation*}
y^{3}\left( y,x\right) ^{3}\left( y,x.y\right) ^{9}\left( y,x,y,y\right)
^{9}.
\end{equation*}%
Therefore, by (\ref{xyx^3y^3_1}), (\ref{(y,x,x,x)y3}), (\ref{(y,x,x)3y3})
and (\ref{(y,x)3y3}), 
\begin{equation}
xyx^{3}y^{3}=x^{4}y^{4}\left( y,x\right) ^{3}\left( y,x.y\right) ^{9}\left(
y,x,y,y\right) ^{9}\left( y,x,x\right) ^{3}\left( y,x,x,y\right) ^{9}\left(
y,x,x,x\right) .  \label{xyx3y3}
\end{equation}%
After this, we have, by (\ref{(xy)4}) and (\ref{xyx3y3}), that%
\begin{equation*}
(xy)^{4}=x^{4}y^{4}\left( y,x\right) ^{3}\left( y,x.y\right) ^{9}\left(
y,x,y,y\right) ^{9}\left( y,x,x\right) ^{3}\left( y,x,x,y\right) ^{9}\left(
y,x,x,x\right) \cdot
\end{equation*}%
\begin{equation*}
(y,x)^{3}(y,x,y)^{5}(y,x,x)(y,x,y,y)^{2}(y,x,y,x)^{2}=
\end{equation*}%
\begin{equation*}
x^{4}y^{4}\left( y,x\right) ^{6}\left( y,x.y\right) ^{14}\left(
y,x,y,y\right) ^{11}\left( y,x,x\right) ^{4}\left( y,x,x,y\right)
^{11}\left( y,x,x,x\right) .
\end{equation*}
\end{proof}

By (\ref{C8}) we have the

\begin{corollary}
The identity 
\begin{equation}
1=(y,x)^{2}(y,x,y)^{2}(y,x,x,x)(y,x,y,y)^{-1}(y,x,y,x)^{-1}.
\label{collectionFormula}
\end{equation}%
fulfills in the variety $\Theta $.
\end{corollary}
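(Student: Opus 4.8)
The plan is to specialize the collection formula of Proposition \ref{collect_formula_applic} to the variety $\Theta$, where in addition to nilpotency of class $\leq 4$ we also have the exponent identity (\ref{exponent}). First I would observe that (\ref{exponent}) forces $(xy)^{4}=1$, $x^{4}=1$ and $y^{4}=1$ for every $x,y\in G$ with $G\in\Theta$, so the left-hand side and the first two factors on the right-hand side of (\ref{collect_formula}) all vanish. This leaves
$$1=(y,x)^{6}(y,x,y)^{14}(y,x,y,y)^{11}(y,x,x)^{4}(y,x,x,y)^{11}(y,x,x,x).$$

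Next I would reduce every exponent modulo $4$. Each of $(y,x)$, $(y,x,y)$, $(y,x,x)$ is an element of $G$, hence has order dividing $4$ by (\ref{exponent}), and the length-$4$ commutators lie in $\gamma_{4}(G)$ and likewise have order dividing $4$. Therefore $(y,x)^{6}=(y,x)^{2}$, $(y,x,y)^{14}=(y,x,y)^{2}$, $(y,x,x)^{4}=1$, $(y,x,y,y)^{11}=(y,x,y,y)^{-1}$ and $(y,x,x,y)^{11}=(y,x,x,y)^{-1}$. Since $\gamma_{4}(G)\subseteq Z(G)$, the surviving length-$4$ factors are central, so they may be permuted among themselves and moved past the lower-weight factors without any correction terms; and by (\ref{C8}) we have $(y,x,x,y)=(y,x,y,x)$, whence $(y,x,x,y)^{-1}=(y,x,y,x)^{-1}$. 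Carrying out these substitutions and reordering the central factors so that $(y,x,x,x)$ precedes $(y,x,y,y)^{-1}(y,x,y,x)^{-1}$ yields precisely (\ref{collectionFormula}).

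There is essentially no obstacle here; the argument is arithmetic modulo $4$ applied to Proposition \ref{collect_formula_applic}. The only point requiring a little care is that the identity (\ref{C8}) relating the length-$4$ commutators $(y,x,y,x)$ and $(y,x,x,y)$ was established in $\mathfrak{N}_{4}$ and hence is available in $\Theta$, and that the centrality of $\gamma_{4}(G)$ is what allows the length-$4$ terms to be commuted freely into the order displayed in (\ref{collectionFormula}). Metabelianness (\ref{metab}) is not needed, since (\ref{collect_formula}) already holds in $\mathfrak{N}_{4}$.
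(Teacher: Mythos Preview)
Your argument is correct and is exactly the approach implicit in the paper: the corollary is obtained from Proposition~\ref{collect_formula_applic} by using the exponent identity (\ref{exponent}) to kill $(xy)^{4}$, $x^{4}$, $y^{4}$ and to reduce all remaining exponents modulo~$4$, then invoking (\ref{C8}) to replace $(y,x,x,y)$ by $(y,x,y,x)$ and the centrality of $\gamma_{4}(G)$ to reorder the length-$4$ factors. The paper records this only by the phrase ``By (\ref{C8}) we have the Corollary''; your write-up simply makes the routine verifications explicit.
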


\setcounter{corollary}{0}

We denote the images of elements of the base $\left\{ C_{1},\ldots
,C_{8}\right\} $ by the natural homomorphism $N_{4}\left( x,y\right)
\rightarrow N_{4}\left( x,y\right) /T=F_{\Theta }\left( x,y\right) $ by same
notation: $\left\{ C_{1},\ldots ,C_{8}\right\} $.

\begin{proposition}
\label{relations}The relations:%
\begin{equation}
C_{i}^{2}=1,(4\leq i\leq 8)  \label{Rxy1}
\end{equation}%
\begin{equation}
C_{3}^{2}C_{6}C_{7}C_{8}=1  \label{Rxy2}
\end{equation}%
in $F_{\Theta }\left( x,y\right) $ are conclusions from the identities of $%
\Theta $.
\end{proposition}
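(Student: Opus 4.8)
The plan is to obtain (\ref{Rxy1}) for $i=4,5,6,7$ by a short telescoping computation, and then to read off $C_{8}^{2}=1$ and (\ref{Rxy2}) from the already proved relation (\ref{collectionFormula}).

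The only device I would use is this: in any group, for any $w$ and any $g$ with $g^{4}=1$, the element $v=(w,g)=w^{-1}w^{g}$ satisfies
\[
v\cdot v^{g}\cdot v^{g^{2}}\cdot v^{g^{3}}
=\bigl(w^{-1}w^{g}\bigr)\bigl(w^{-g}w^{g^{2}}\bigr)\bigl(w^{-g^{2}}w^{g^{3}}\bigr)\bigl(w^{-g^{3}}w^{g^{4}}\bigr)
=w^{-1}w^{g^{4}}=w^{-1}w=1 ,
\]
the inner adjacent factors cancelling and the last step using $g^{4}=1$; by (\ref{exponent}) this is available in $F_{\Theta}(x,y)$ for every $g$. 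I would run it for the four pairs $(w,g)=(y,x),\ (x,y),\ ((y,x),x),\ ((y,x),y)$, for which $v$ equals, respectively, $C_{3}$, $C_{3}^{-1}$, $C_{5}=(y,x,x)$ and $C_{4}=(y,x,y)$.

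In each case I would expand the conjugates $v,v^{g},v^{g^{2}},v^{g^{3}}$ in the base $\{C_{1},\dots ,C_{8}\}$, using the conjugation action $C_{3}^{x}=C_{3}C_{5}$, $C_{3}^{y}=C_{3}C_{4}$, $C_{4}^{y}=C_{4}C_{7}$, $C_{5}^{x}=C_{5}C_{6}$ together with the centrality of $C_{6},C_{7},C_{8}$ (all immediate from the definitions (\ref{baseN42}) of the $C_{i}$ and from $\gamma_{4}\subseteq Z$), then collect the product with the help of the fact that $\gamma_{2}(F_{\Theta}(x,y))$ is abelian, and finally reduce exponents modulo $4$ by (\ref{exponent}). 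In every case all base factors but one acquire exponent divisible by $4$, so the product collapses: for $(y,x)$ to $C_{3}^{4}C_{5}^{6}C_{6}^{4}=C_{5}^{2}$, for $(x,y)$ to $C_{3}^{-4}C_{4}^{-6}C_{7}^{-4}=C_{4}^{-2}$, for $((y,x),x)$ to $C_{5}^{4}C_{6}^{6}=C_{6}^{2}$, and for $((y,x),y)$ to $C_{4}^{4}C_{7}^{6}=C_{7}^{2}$. Since each of these products equals $1$, this yields $C_{4}^{2}=C_{5}^{2}=C_{6}^{2}=C_{7}^{2}=1$.

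Finally, (\ref{collectionFormula}) reads $1=C_{3}^{2}C_{4}^{2}C_{6}C_{7}^{-1}C_{8}^{-1}$; substituting $C_{4}^{2}=1$ and $C_{7}^{-1}=C_{7}$ gives $C_{8}=C_{3}^{2}C_{6}C_{7}$, whence $C_{8}^{2}=C_{3}^{4}C_{6}^{2}C_{7}^{2}=1$ by (\ref{exponent}) and the relations just established, completing (\ref{Rxy1}); and with $C_{8}^{-1}=C_{8}$ now at hand, (\ref{collectionFormula}) becomes $1=C_{3}^{2}C_{6}C_{7}C_{8}$, which is (\ref{Rxy2}). I expect the bookkeeping in the four conjugation expansions to be the delicate point --- keeping track of which iterated commutators vanish ($\gamma_{5}=\{1\}$) and which are central --- together with the circumstance that $C_{8}$, unlike $C_{4},\dots ,C_{7}$, is not a commutator $(w,x)$ or $(w,y)$ modulo $\gamma_{4}$ and so cannot be reached by the telescoping argument, so it must instead be pulled out of the collection identity (\ref{collectionFormula}).
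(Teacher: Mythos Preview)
Your argument is correct. The telescoping identity $v\,v^{g}v^{g^{2}}v^{g^{3}}=1$ for $v=(w,g)$ and $g^{4}=1$ is valid in any group, the conjugation rules you quote are immediate from the definitions, and since all factors lie in the abelian subgroup $\gamma_{2}(F_{\Theta}(x,y))$ the collection step is legitimate; the reduction $C_{i}^{4}=1$ uses only (\ref{exponent}). Your observation that the telescoping cannot reach $C_{8}^{2}=1$ (because $C_{8}$ is already central, so the product degenerates to $C_{8}^{4}$) is also correct, and your extraction of $C_{8}^{2}=1$ and (\ref{Rxy2}) from (\ref{collectionFormula}) is clean.

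The paper takes a different route: it never uses your telescoping device, but instead derives every relation $C_{i}^{2}=1$ by substituting special elements into the single identity (\ref{collectionFormula}). Concretely, it swaps $x\leftrightarrow y$ to get $C_{4}^{2}=C_{5}^{2}C_{8}^{2}$, substitutes $(y,x)$ and $x$ for $x,y$ to get $C_{5}^{2}C_{6}^{2}=1$, substitutes $y$ and $(y,x)$ to get $C_{4}^{2}=1$, then substitutes $(y,x,x)$ and $(y,x,y)$ to kill $C_{6}^{2}$ and $C_{7}^{2}$; only then does it read off (\ref{Rxy2}). So the paper uses one tool (the collection formula) with many substitutions, whereas you use two tools --- a direct exponent-$4$ telescoping for $C_{4},C_{5},C_{6},C_{7}$ and the collection formula only for $C_{8}$ and (\ref{Rxy2}). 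Your approach is arguably more transparent about \emph{why} exponent~$4$ forces order~$2$ on these commutators; the paper's approach is more uniform and avoids tracking the conjugation action explicitly.
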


\begin{proof}
The (\ref{collectionFormula}) is an identity in $\Theta $, so in (\ref%
{collectionFormula}) we can substitute $x$ instead $y$ and vice versa.
Therefore%
\begin{equation*}
1=(x,y)^{2}(x,y,x)^{2}(x,y,y,y)(x,y,x,x)^{-1}(x,y,x,y)^{-1}=
\end{equation*}%
\begin{equation}
(y,x)^{2}(y,x,y)^{2}(y,x,x,x)(y,x,y,y)^{-1}(y,x,y,x)^{-1}.
\label{collectionFormula_x_y_y_x}
\end{equation}%
\newline
By (\ref{i_d}), (\ref{g2out}), (\ref{g4_powers}) and (\ref{C8}) we have that 
$(y,x)^{2}=(x,y)^{2}$, $(y,x,x,x)=(x,y,x,x)^{-1}$, $(x,y,y,y)=(y,x,y,y)^{-1}$%
, $(y,x,y,x)=(x,y,x,y)^{-1}$. Therefore we conclude from (\ref%
{collectionFormula_x_y_y_x}) that%
\begin{equation}
(x,y,x)^{2}(y,x,y,x)=(y,x,y)^{2}(y,x,y,x)^{-1}.
\label{collectionFormula_x_y_y_x_2}
\end{equation}%
Also we have by (\ref{i_d}), that%
\begin{equation*}
(x,y,x)=(y,x,x)^{-1}\left( x,(y,x),(x,y)\right) =(y,x,x)^{-1}=C_{5}^{-1}.
\end{equation*}%
Therefore $(x,y,x)^{2}=C_{5}^{-2}=C_{5}^{2}$. Now we conclude from (\ref%
{collectionFormula_x_y_y_x_2}) that%
\begin{equation}
C_{4}^{2}=C_{5}^{2}C_{8}^{2}\newline
.  \label{2_4__2_52_8}
\end{equation}

Now we substitute in (\ref{collectionFormula}) $(y,x)$ instead $x$ and $x$
instead $y$:\qquad 
\begin{equation*}
1=(x,(y,x))^{2}(x,(y,x),x)^{2}(x,(y,x),(y,x),(y,x))\cdot
\end{equation*}%
\begin{equation*}
(x,(y,x),x,x)^{-1}(x,(y,x),x,(y,x))^{-1}=
\end{equation*}%
\begin{equation*}
(x,(y,x))^{2}(x,(y,x),x)^{2}=(y,x,x)^{-2}(y,x,x,x)^{-2}
\end{equation*}%
So the relation%
\begin{equation}
C_{5}^{2}C_{6}^{2}=1  \label{2_52_6}
\end{equation}%
holds.

Analogously we substitute in (\ref{collectionFormula}) $y$ instead $x$ and $%
(y,x)$ instead $y$ and conclude that%
\begin{equation}
1=C_{4}^{2}.  \label{2_4}
\end{equation}%
Now by (\ref{2_4__2_52_8}) and (\ref{2_52_6}) we have that%
\begin{equation}
C_{5}^{2}=C_{6}^{2}=C_{8}^{2}.  \label{2_5__2_6__2_8}
\end{equation}

Also, when we substitute in (\ref{collectionFormula}) $(y,x,x)$ instead $y$,
we obtain that 
\begin{equation}
1=C_{6}^{2}.  \label{2_6}
\end{equation}

And when we substitute in (\ref{collectionFormula}) $(y,x,y)$ instead $x$,
we conclude%
\begin{equation}
1=C_{7}^{-2}=C_{7}^{,2}.  \label{2_7}
\end{equation}%
Therefore, we conclude (\ref{Rxy1}) from (\ref{2_4}), (\ref{2_5__2_6__2_8}),
(\ref{2_6}), (\ref{2_7}). And after this the (\ref{collectionFormula}) has
form%
\begin{equation*}
1=C_{3}^{2}C_{4}^{2}C_{6}C_{7}^{-1}C_{8}^{-1}=C_{3}^{2}C_{6}C_{7}C_{8}.
\end{equation*}
\end{proof}

Now we consider in the group $N_{4}\left( x,y\right) $ the minimal normal
subgroup $R$ which contains elements $x^{4}$, $y^{4}$ and the left parts of
the relations (\ref{Rxy1}) and (\ref{Rxy2}). Here we consider the elements $%
x=C_{1},y=C_{2}$, and $C_{3},\ldots ,C_{8}$ as elements of $N_{4}\left(
x,y\right) $. The images of the elements $C_{1},\ldots ,C_{8}$ by the
natural epimorphism $N_{4}\left( x,y\right) \rightarrow N_{4}\left(
x,y\right) /R$ we also denote by $C_{1},\ldots ,C_{8}$. We see from the
Proposition \ref{relations} that the base of the group $N_{4}\left(
x,y\right) /R$ is $\left\{ C_{1},C_{2},\ldots ,C_{7}\right\} $ and, if $%
1\leq i\leq 3$, then $\left\vert C_{i}\right\vert =4$, if $4\leq i\leq 7$,
then $\left\vert C_{i}\right\vert =2$.

Our goal is to prove that $N_{4}\left( x,y\right) /R=F_{\Theta }\left(
x,y\right) $. For this we must study the group $N_{4}\left( x,y\right) /R$
and prove some lemmas about it's properties. These lemmas we will use in the
proof of the Theorem \ref{freeGroup} and in other computations.

\section{Some lemmas about the group $N_{4}\left( x,y\right) /R$}

\setcounter{equation}{0}

In this section we will denote the group $N_{4}\left( x,y\right) /R$ by $G$.

\begin{lemma}
\label{g2}$\gamma _{2}\left( G\right) $ is a commutative group.
\end{lemma}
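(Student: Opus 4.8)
The plan is to derive the statement from the stronger fact that the free group $N_{4}\left( x,y\right) $ is itself metabelian, i.e.\ that $\gamma _{2}\left( N_{4}\left( x,y\right) \right) $ is abelian. Since $G=N_{4}\left( x,y\right) /R$ is a homomorphic image of $N_{4}\left( x,y\right) $, and a surjective homomorphism carries $\gamma _{2}$ of the source onto $\gamma _{2}$ of the image, this yields at once that $\gamma _{2}\left( G\right) $ is abelian.

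To see that $N_{4}\left( x,y\right) $ is metabelian, note that, having only two generators, $\gamma _{2}\left( N_{4}\left( x,y\right) \right) /\gamma _{3}\left( N_{4}\left( x,y\right) \right) $ is cyclic, generated by the image of $C_{3}=\left( y,x\right) $; this is clear from the base (\ref{baseN42}), in which $C_{3}$ is the only basis element of weight $2$. Hence $\gamma _{2}\left( N_{4}\left( x,y\right) \right) =\left\langle C_{3}\right\rangle \gamma _{3}\left( N_{4}\left( x,y\right) \right) $, so $\left( \gamma _{2}\left( N_{4}\left( x,y\right) \right) ,\gamma _{2}\left( N_{4}\left( x,y\right) \right) \right) $ is generated by commutators $\left( a,b\right) $ with $a,b$ taken from $\left\{ C_{3}\right\} \cup \gamma _{3}\left( N_{4}\left( x,y\right) \right) $. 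Now $\left( C_{3},C_{3}\right) =1$; a commutator of $C_{3}$ with an element of $\gamma _{3}\left( N_{4}\left( x,y\right) \right) $ lies in $\left( \gamma _{2},\gamma _{3}\right) \subseteq \gamma _{5}\left( N_{4}\left( x,y\right) \right) =\left\{ 1\right\} $; and a commutator of two elements of $\gamma _{3}\left( N_{4}\left( x,y\right) \right) $ lies in $\left( \gamma _{3},\gamma _{3}\right) \subseteq \gamma _{6}\left( N_{4}\left( x,y\right) \right) =\left\{ 1\right\} $. Therefore $\left( \gamma _{2}\left( N_{4}\left( x,y\right) \right) ,\gamma _{2}\left( N_{4}\left( x,y\right) \right) \right) =\left\{ 1\right\} $.

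Alternatively, and closer to the explicit style of this paper, one can argue directly inside $G$: one has that $\gamma _{2}\left( G\right) $ is generated by $C_{3},C_{4},\ldots ,C_{8}$, with $C_{4},C_{5}\in \gamma _{3}\left( G\right) $ and $C_{6},C_{7},C_{8}\in \gamma _{4}\left( G\right) \subseteq Z\left( G\right) $; so it suffices to check that $\left( C_{3},C_{4}\right) $, $\left( C_{3},C_{5}\right) $ and $\left( C_{4},C_{5}\right) $ are trivial, and each of these lies in $\left( \gamma _{2}\left( G\right) ,\gamma _{3}\left( G\right) \right) \subseteq \gamma _{5}\left( G\right) =\left\{ 1\right\} $ because $C_{3}\in \gamma _{2}\left( G\right) $ and $C_{4},C_{5}\in \gamma _{3}\left( G\right) $, using that $G\in \mathfrak{N}_{4}$.

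There is no real obstacle here: the only input used is that $G$ is generated by two elements and has nilpotency class at most $4$, so that $\gamma _{2}/\gamma _{3}$ is cyclic and $\gamma _{5}=\left\{ 1\right\} $. The only point to be careful about is that the commutator of an element of $\gamma _{2}$ with an element of $\gamma _{3}$ already lies in $\gamma _{5}$, which is exactly what forces the group to be metabelian. In particular the relations collected in $R$ play no role in this lemma; they enter only in the subsequent lemmas and in the proof of Theorem \ref{freeGroup}.
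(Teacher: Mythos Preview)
Your proof is correct and follows essentially the same line as the paper's. The paper observes that $\gamma _{3}\left( N_{4}\left( x,y\right) \right) \leq Z\left( \gamma _{2}\left( N_{4}\left( x,y\right) \right) \right) $ and that $\gamma _{2}/\gamma _{3}$ is cyclic (generated by the image of $(y,x)$), then invokes the standard ``cyclic over central'' criterion to conclude $\gamma _{2}\left( N_{4}\left( x,y\right) \right) $ is abelian, and finally passes to the quotient $G$; your first argument is exactly this, with the ``cyclic over central'' step unwound by hand (checking that $(C_{3},C_{3})$, $(C_{3},\gamma _{3})$, $(\gamma _{3},\gamma _{3})$ all vanish in $\gamma _{5}$). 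Your remark that the relations in $R$ are irrelevant here is also implicit in the paper's proof.
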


\begin{proof}
We have that $\gamma _{3}\left( N_{4}\left( x,y\right) \right) \leq Z\left(
\gamma _{2}\left( N_{4}\left( x,y\right) \right) \right) $ and quotient
group $\gamma _{2}\left( N_{4}\left( x,y\right) \right) /\gamma _{3}\left(
N_{4}\left( x,y\right) \right) =\left\langle \left( y,x\right) \gamma
_{3}\left( N_{4}\left( x,y\right) \right) \right\rangle $ is a cyclic group.
Therefore $\gamma _{2}\left( N_{4}\left( x,y\right) \right) $ is a
commutative group. $G$ is a homomorphic image of $N_{4}\left( x,y\right) $,
so, $\gamma _{2}\left( G\right) $ is a commutative group.
\end{proof}

\begin{lemma}
\label{g3}The group $\gamma _{3}\left( G\right) $ is a group of exponent $2$.
\end{lemma}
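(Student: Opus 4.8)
The plan is to show that every generator of $\gamma_3(G)$ has order dividing $2$, and then use commutativity of $\gamma_2(G)$ (Lemma \ref{g2}) to conclude that $\gamma_3(G)$, being generated by finitely many elements of exponent $2$ that all commute with each other, has exponent $2$. Since $G$ is generated by $x$ and $y$, the group $\gamma_3(G)$ is generated by the basic commutators of weight $\geq 3$ in $x,y$, which from the base (\ref{baseN42}) are the images of $C_4 = (y,x,y)$, $C_5 = (y,x,x)$, $C_6 = (y,x,x,x)$, $C_7 = (y,x,y,y)$ and $C_8 = (y,x,y,x)$. By Proposition \ref{relations} (relations (\ref{Rxy1})), in $G = N_4(x,y)/R$ we have $C_i^2 = 1$ for $4 \leq i \leq 8$, so each of these generators has order at most $2$.

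The remaining point is that $\gamma_3(G)$ really is generated by these particular elements. First I would note $\gamma_3(G) \subseteq \gamma_2(G)$, and $\gamma_2(G)$ is abelian by Lemma \ref{g2}; hence $\gamma_3(G)$ is abelian too. It is a standard fact for nilpotent groups generated by a set $\{x,y\}$ that $\gamma_3$ (modulo $\gamma_5 = \{1\}$ here) is generated by the left-normed commutators of weight $3$ and $4$ in $x,y$; together with the commutator identities (\ref{l_d})--(\ref{i_d}) and the consequences (\ref{g3out}), (\ref{g2out}), (\ref{g4_powers}) collecting such commutators into the Shirshov base reduces any element of $\gamma_3(G)$ to a product of powers of $C_4,\dots,C_8$. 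Alternatively, one can simply invoke the base of $N_4(x,y)$: since $\{C_1,\dots,C_8\}$ is a base and $\gamma_3(N_4(x,y))$ is generated by $C_4,\dots,C_8$, its homomorphic image $\gamma_3(G)$ is generated by the images of $C_4,\dots,C_8$.

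Putting these together: $\gamma_3(G)$ is an abelian group generated by elements $\overline{C_4}, \dots, \overline{C_8}$ each satisfying $\overline{C_i}^2 = 1$ by (\ref{Rxy1}). An abelian group generated by elements of order dividing $2$ has exponent dividing $2$, so $g^2 = 1$ for every $g \in \gamma_3(G)$; that is, $\gamma_3(G)$ has exponent $2$ (it is nontrivial, since e.g. $C_4 \neq 1$ in $G$ by the discussion of the base of $N_4(x,y)/R$ preceding this section).

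The main obstacle is the bookkeeping in the second paragraph: establishing cleanly that $\gamma_3(G)$ is generated precisely by the images of $C_4,\dots,C_8$ and nothing more. Once one is willing to quote the Shirshov base structure of $N_4(x,y)$ from \cite{Bahturin} (as already used for (\ref{baseN42})) this is immediate, since $\gamma_3$ of a free nilpotent-class-$4$ group on two generators is spanned by exactly those basic commutators of weight $3$ and $4$; the passage to the quotient $G$ is just functoriality of $\gamma_3$ under surjections. So the real content is entirely contained in Proposition \ref{relations}, and this lemma is a short corollary of it.
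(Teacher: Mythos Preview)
Your proof is correct and follows essentially the same approach as the paper: the paper's argument is simply ``$\gamma_3(G)=\langle C_4,\ldots,C_7\rangle$; Lemma \ref{g2} and relations (\ref{Rxy1}) complete the proof,'' which is exactly your reasoning compressed into one line. The only cosmetic difference is that the paper uses $C_4,\ldots,C_7$ as generators (since $C_8$ is redundant in the base of $G=N_4(x,y)/R$), whereas you include $C_8$, but this is immaterial since $C_8^2=1$ is also part of (\ref{Rxy1}).
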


\begin{proof}
We have that $\gamma _{3}\left( G\right) =\left\langle C_{4},\ldots
,C_{7}\right\rangle $. Lemma \ref{g2} and the consideration of relations (%
\ref{Rxy1}) completes the proof.
\end{proof}

\begin{lemma}
\label{C3_2}For every $h\in \gamma _{2}\left( G\right) $ the inclusion $%
h^{2}\in \gamma _{4}\left( G\right) $ holds.
\end{lemma}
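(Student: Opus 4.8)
The plan is to reduce the statement to the relations already established in Proposition~\ref{relations}, using the explicit base $\{C_1,\dots,C_7\}$ of $G$.

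First I would recall that $\gamma_2(G)$ is generated by $C_3,C_4,C_5,C_6,C_7$: indeed $\gamma_2(N_4(x,y))$ is generated by $C_3,\dots,C_8$, and $G$ is a homomorphic image of $N_4(x,y)$. By Lemma~\ref{g2} the group $\gamma_2(G)$ is abelian, so an arbitrary $h\in\gamma_2(G)$ can be written as $h=C_3^{a}C_4^{b}C_5^{c}C_6^{d}C_7^{e}$ and, since $\gamma_2(G)$ is commutative, $h^{2}=C_3^{2a}C_4^{2b}C_5^{2c}C_6^{2d}C_7^{2e}$.

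Next I would eliminate every factor but the first. By the relations (\ref{Rxy1}) we have $C_i^{2}=1$ for $4\le i\le 8$, hence $h^{2}=C_3^{2a}=(C_3^{2})^{a}$. It remains only to verify that $C_3^{2}\in\gamma_4(G)$, and this is precisely the content of relation (\ref{Rxy2}): from $C_3^{2}C_6C_7C_8=1$ we get $C_3^{2}=C_8^{-1}C_7^{-1}C_6^{-1}$. Each of $C_6=(y,x,x,x)$, $C_7=(y,x,y,y)$, $C_8=(y,x,y,x)$ is a commutator of weight $4$, so it lies in $\gamma_4(G)$; since $\gamma_4(G)$ is a subgroup, $C_3^{2}\in\gamma_4(G)$, and therefore $h^{2}=(C_3^{2})^{a}\in\gamma_4(G)$.

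I do not expect a genuine obstacle here: essentially all the weight of the lemma sits in Proposition~\ref{relations}. The only point asking for a little care is the first reduction --- that every element of $\gamma_2(G)$ is a product of powers of $C_3,\dots,C_7$ and that its square may be computed factor by factor --- which becomes immediate once the commutativity of $\gamma_2(G)$ from Lemma~\ref{g2} is invoked.
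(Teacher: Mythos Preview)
Your proof is correct and follows exactly the same line as the paper's own argument: use that $\gamma_2(G)=\langle C_3,\dots,C_7\rangle$, invoke Lemma~\ref{g2} to square factorwise, kill the higher $C_i$ by (\ref{Rxy1}), and read off $C_3^2\in\gamma_4(G)$ from (\ref{Rxy2}). The paper merely records these ingredients in a single sentence, while you have spelled them out.
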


\begin{proof}
We have that $\gamma _{2}\left( G\right) =\left\langle C_{3},\ldots
,C_{7}\right\rangle $. Lemma \ref{g2} and the consideration of relations (%
\ref{Rxy1}) and (\ref{Rxy2}) completes the proof.
\end{proof}

\begin{lemma}
\label{q_d}For every $a,b,c\in G$ the following equalities holds:%
\begin{equation}
(ab,c)^{2}=(a,c)^{2}(b,c)^{2},  \label{q_l_d}
\end{equation}%
\begin{equation}
(a,bc)^{2}=(a,c)^{2}(a,b)^{2}  \label{q_r_d}
\end{equation}%
\begin{equation}
(a^{-1},b)^{2}=(a,b)^{2}  \label{q_i_l}
\end{equation}%
\begin{equation}
(a,b^{-1})^{2}=(a,b)^{2}.  \label{q_i_r}
\end{equation}
\end{lemma}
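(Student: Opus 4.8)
The plan is to split the four identities into two families: first establish the "quadratic left and right distributivity" laws \eqref{q_l_d} and \eqref{q_r_d}, and then derive the "quadratic inverse" laws \eqref{q_i_l} and \eqref{q_i_r} as purely formal consequences of them.

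For \eqref{q_l_d} I would expand $(ab,c)$ by the commutator identity \eqref{l_d}, writing $(ab,c)=(a,c)(a,c,b)(b,c)$. All three factors lie in $\gamma_2(G)$, and the middle factor $(a,c,b)$ lies in $\gamma_3(G)$. Since $\gamma_2(G)$ is commutative by Lemma \ref{g2}, the three factors pairwise commute, so $(ab,c)^2=(a,c)^2(a,c,b)^2(b,c)^2$; and since $\gamma_3(G)$ has exponent $2$ by Lemma \ref{g3}, the term $(a,c,b)^2$ is trivial, which yields \eqref{q_l_d}. The proof of \eqref{q_r_d} is verbatim the same, starting instead from \eqref{r_d}, i.e. $(a,bc)=(a,c)(a,b)(a,b,c)$ with the correction term $(a,b,c)\in\gamma_3(G)$ killed by squaring.

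For the inverse laws I would first record the auxiliary fact that $(g_1,g_2)^{-2}=(g_1,g_2)^2$ for all $g_1,g_2\in G$: indeed $(g_1,g_2)^2\in\gamma_4(G)\subseteq\gamma_3(G)$ by Lemma \ref{C3_2}, and $\gamma_3(G)$ has exponent $2$ by Lemma \ref{g3}, so $(g_1,g_2)^2$ has order dividing $2$. Then, applying \eqref{q_l_d} with $a$ replaced by $a^{-1}$ and $b$ replaced by $a$, I get $1=(1,b)^2=(a^{-1}a,b)^2=(a^{-1},b)^2(a,b)^2$, hence $(a^{-1},b)^2=(a,b)^{-2}=(a,b)^2$, which is \eqref{q_i_l}. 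Symmetrically, \eqref{q_i_r} follows from \eqref{q_r_d} applied to $b^{-1}b=1$.

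I do not expect a genuine obstacle here: the whole point is the observation that every correction term produced by \eqref{l_d} and \eqref{r_d} falls into $\gamma_3(G)$ and is therefore annihilated by the squaring operation, while the commutativity of $\gamma_2(G)$ lets the squares of the surviving factors be separated. The only step that needs to be stated with a little care is the identity $(g_1,g_2)^{-2}=(g_1,g_2)^2$, which rests on the inclusion chain $\gamma_4(G)\subseteq\gamma_3(G)$ together with Lemma \ref{C3_2} and Lemma \ref{g3}.
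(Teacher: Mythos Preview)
Your argument is correct. The route differs from the paper's in two small but genuine ways, so a brief comparison is worthwhile.

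For \eqref{q_l_d} and \eqref{q_r_d}, the paper keeps the conjugated form $(ab,c)=(a,c)^{b}(b,c)$, squares inside the abelian group $\gamma_{2}(G)$ to get $((a,c)^{2})^{b}(b,c)^{2}$, and then invokes Lemma~\ref{C3_2} to say $(a,c)^{2}\in\gamma_{4}(G)\subseteq Z(G)$, which kills the conjugation. You instead expand the conjugate as $(a,c)(a,c,b)$ and use Lemma~\ref{g3} to annihilate $(a,c,b)^{2}$. Both arguments are one line long; yours trades the appeal to Lemma~\ref{C3_2} for an appeal to Lemma~\ref{g3}.

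For \eqref{q_i_l} and \eqref{q_i_r}, the paper starts over from the identity \eqref{i_d}, writing $(a^{-1},b)=(b,a)^{a^{-1}}$ and again using centrality of squares via Lemma~\ref{C3_2}. Your derivation is tidier: having already proved \eqref{q_l_d}, you specialise it to $a^{-1}\cdot a=1$ and read off $(a^{-1},b)^{2}=(a,b)^{-2}$, then use the exponent-$4$ fact (which the paper also records) to flip the sign. This makes the inverse laws manifestly formal corollaries of the distributive ones, at no extra cost.
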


\begin{proof}
We have that%
\begin{equation*}
(ab,c)^{2}=\left( (a,c)^{b}(b,c)\right) ^{2}=\left( (a,c)^{2}\right)
^{b}(b,c)^{2}
\end{equation*}%
by (\ref{l_d}) and by Lemma \ref{g2}. And now\ by Lemma \ref{C3_2} we
conclude (\ref{q_l_d}). By similar computation we can conclude (\ref{q_l_d})
from (\ref{r_d}) and Lemma \ref{C3_2}.

By Lemmas \ref{C3_2}\ and \ref{g3} $\gamma _{2}\left( G\right) $ is a group
of exponent $4$. Therefore, by (\ref{i_d}) and Lemma \ref{C3_2} we have that%
\begin{equation*}
(a^{-1},b)^{2}=\left( (b,a)^{a^{-1}}\right) ^{2}=\left( (b,a)^{2}\right)
^{a^{-1}}=(b,a)^{2}=(a,b)^{-2}=(a,b)^{2}.
\end{equation*}%
By similar computation we can conclude (\ref{q_i_r}).
\end{proof}

\begin{lemma}
\label{gh}If $g\in G$, $h\in \gamma _{2}\left( G\right) $, then the $\left(
gh\right) ^{4}=g^{4}$.
\end{lemma}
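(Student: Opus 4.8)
The plan is to expand $(gh)^4$ using the collecting process, exploiting that $h$ lies in the commutator subgroup $\gamma_2(G)$ and hence interacts with $g$ only through higher commutators. First I would write $(gh)^4 = ghghghgh$ and push all four copies of $h$ to the right, one at a time, using the identity $hg = gh(h,g)$ together with the fact that $h\in\gamma_2(G)$, so that $(h,g)\in\gamma_3(G)$, $( (h,g),g)\in\gamma_4(G)\subseteq Z(G)$, and any commutator involving $h$ and another commutator lies in $\gamma_4(G)$. Collecting the four $h$'s to the right produces $g^4 h^4$ times a product of commutators, each of which is of the form $(h,g,\dots)$ with at least one entry equal to $h$; every such commutator lies in $\gamma_3(G)$, and by Lemma \ref{g3} these have exponent $2$, while by Lemma \ref{C3_2} (applied to the relevant entries) and the identities (\ref{l_d})--(\ref{i_d}) one can arrange the exponents of each type.

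Next I would count multiplicities. Moving $h$ past $g^k$ yields, modulo $\gamma_4(G)$, a factor $(h,g)^k$ by repeated use of (\ref{l_d}) and Lemma \ref{g2}; moving it further past already-collected commutators yields only $\gamma_4$-corrections, which by (\ref{g4_powers}) and Lemma \ref{g3} accumulate with controlled parity. Carrying this out for each of the four $h$'s and summing the exponents, the total power of $(h,g)$ is $1+2+3 = 6$ (the fourth $h$ has nothing to its left among the $g$'s once reorganized), which is $\equiv 2 \pmod 4$; but by Lemma \ref{C3_2}, $(h,g)^2 \in \gamma_4(G)$, so $(h,g)^6 \in \gamma_4(G)$. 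Similarly $h^4$: since $h\in\gamma_2(G)$, Lemma \ref{C3_2} gives $h^2\in\gamma_4(G)$, hence $h^4\in\gamma_4(G)$ as well (indeed $h^4=(h^2)^2$, and $\gamma_4(G)$ has exponent dividing $2$ by Lemma \ref{g3}, so $h^4=1$). Thus after collecting, $(gh)^4 = g^4 \cdot z$ where $z$ is a product of elements of $\gamma_3(G)$, all of which square to $1$, and the net exponent of each generator of $\gamma_3(G)$ appearing must be shown to be even.

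The main obstacle will be the bookkeeping of the $\gamma_3$- and $\gamma_4$-valued correction terms: one must verify that every commutator produced that involves $h$ appears an even number of times (so that Lemma \ref{g3} kills it) rather than relying on a single clean cancellation. The cleanest route is probably to work modulo $\gamma_4(G)$ first — there, $\gamma_2(G)/\gamma_4(G)$ and the relevant quotients are elementary enough that $(gh)^4 \equiv g^4 (h,g)^6 \equiv g^4 \pmod{\gamma_4(G)}$ using Lemma \ref{q_d} and Lemma \ref{C3_2} — and then handle the $\gamma_4(G)$ term separately, showing it is trivial because it is a product of squares of commutators of length $\le 3$ (each in $\gamma_4(G)$ by Lemma \ref{C3_2}, and $\gamma_4(G)$ has exponent $2$ by Lemma \ref{g3}, forcing these squares to vanish), or by invoking (\ref{g4_powers}) to see the $\gamma_4$-part is multiplicative in $h$ and vanishes since $h^2\in\gamma_4(G)$ contributes an even count. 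Combining the two stages gives $(gh)^4 = g^4$.
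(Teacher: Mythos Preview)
Your plan is workable but it re-derives by hand what the paper already has available as the collection formula (\ref{collect_formula}). The paper's proof is a one-line substitution: plug $g$ for $x$ and $h$ for $y$ in (\ref{collect_formula}), then observe that because $h\in\gamma_2(G)$ every commutator factor has its weight shifted up by at least one. Thus $(h,g,h)$, $(h,g,h,h)$, $(h,g,g,h)$, $(h,g,g,g)$ all land in $\gamma_5(G)=\{1\}$, while the remaining factors $h^4$, $(h,g)^6$, $(h,g,g)^4$ vanish by Lemmas~\ref{C3_2} and~\ref{g3} (note $(h,g)\in\gamma_3(G)$ already, so Lemma~\ref{g3} kills $(h,g)^{6}$ directly --- you detour through Lemma~\ref{C3_2} unnecessarily here). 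That is the whole argument.

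Your two-stage plan (first work modulo $\gamma_4$, then handle the $\gamma_4$-part) would also succeed, but the bookkeeping you identify as the ``main obstacle'' --- checking that each surviving commutator type appears with even exponent --- is precisely the information encoded in the exponents $6,14,11,4,11,1$ of (\ref{collect_formula}). Your final paragraph's justification for the $\gamma_4$-part (``a product of squares of commutators of length $\le 3$'') is vague and not quite right as stated: a square lying in $\gamma_4$ need not vanish just because $\gamma_4$ has exponent~$2$. What actually makes those terms disappear is either that they lie in $\gamma_5$, or that the specific exponent from the collection formula is even and the base lies in $\gamma_3$. So the cleanest fix for your write-up is simply to invoke (\ref{collect_formula}) at the outset rather than redo the collecting process.
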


\begin{proof}
We know that the identity (\ref{collect_formula}) holds in the variety $%
\mathfrak{N}_{4}$. So this identity holds in $G$. Hence we have that%
\begin{equation*}
(gh)^{4}=g^{4}h^{4}(h,g)^{6}(h,g,h)^{14}(h,g,h,h)^{11}(h,g,g)^{4}(h,g,g,h)^{11}(h,g,g,g).
\end{equation*}%
In our case $(h,g,h),(h,g,h,h),(h,g,g,h),(h,g,g,g)\in \gamma _{5}\left(
G\right) $. By Lemmas \ref{C3_2}\ and \ref{g3} we have that $%
h^{4}=(h,g)^{6}=(h,g,g)^{4}=1$. Therefore $(gh)^{4}=g^{4}$.
\end{proof}

\section{Computation of the group $F_{\Theta }\left( x,y\right) $}

\setcounter{equation}{0}

\begin{theorem}
\label{freeGroup}$N_{4}\left( x,y\right) /R=F_{\Theta }\left( x,y\right) $.
\end{theorem}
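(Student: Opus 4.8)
The plan is to show that $R$ equals $T$, the verbal subgroup of identities of $\Theta$ in two variables inside $N_4(x,y)$. One inclusion, $R \subseteq T$, is immediate: every generator of $R$ — namely $x^4$, $y^4$, and the left-hand sides of the relations (\ref{Rxy1}) and (\ref{Rxy2}) — lies in $T$, because $x^4 = 1$ is the identity (\ref{exponent}) and the relations (\ref{Rxy1}), (\ref{Rxy2}) were shown in Proposition \ref{relations} to be consequences of the identities of $\Theta$. Since $T$ is a normal subgroup and $R$ is the \emph{minimal} normal subgroup containing these elements, $R \subseteq T$. Hence there is a natural epimorphism $N_4(x,y)/R \twoheadrightarrow N_4(x,y)/T = F_\Theta(x,y)$, and everything reduces to proving this map is injective, equivalently $T \subseteq R$.

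**To get the reverse inclusion** I would argue that $G := N_4(x,y)/R$ already lies in the variety $\Theta$; then, since $F_\Theta(x,y)$ is the free $\Theta$-group on $\{x,y\}$ and the images of $x,y$ generate $G$, the universal property gives an epimorphism $F_\Theta(x,y) \twoheadrightarrow G$ sending generators to generators, which composed with the previous map (in either order) must be the identity on the generating set, forcing both to be isomorphisms. So the real content is: \emph{$G$ satisfies the three defining identities} (\ref{exponent}), (\ref{metab}), (\ref{4nilp}). Identity (\ref{4nilp}) holds because $G$ is a quotient of $N_4(x,y)$, which is nilpotent of class $\le 4$. Identity (\ref{metab}) — metabelianity — is exactly Lemma \ref{g2}, which says $\gamma_2(G)$ is commutative. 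It remains to verify the exponent identity $g^4 = 1$ for every $g \in G$.

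**This exponent check is the main obstacle**, and it is where the lemmas of Section 6 do the work. Write a general element as $g = x^{\alpha_1} y^{\alpha_2} h$ with $h \in \gamma_2(G)$. By Lemma \ref{gh}, $(x^{\alpha_1} y^{\alpha_2} h)^4 = (x^{\alpha_1} y^{\alpha_2})^4$, so it suffices to kill fourth powers of elements of the form $x^a y^b$. Expanding $(x^a y^b)^4$ by the collection formula (\ref{collect_formula}) — valid in $\mathfrak{N}_4 \supseteq G$ — one gets $x^{4a} y^{4b}$ times a product of commutators in $x^a, y^b$ of weights $2,3,4$. Now use the relations in $G$: $x^4 = y^4 = 1$; by Lemmas \ref{C3_2} and \ref{g3} together with Lemma \ref{q_d}, the weight-$2$ commutator $(y^b, x^a)^2 \in \gamma_4(G)$ and the weight-$3$ commutators lie in $\gamma_3(G)$ which has exponent $2$, so every term appearing with an even exponent vanishes, and the surviving weight-$4$ terms are controlled by relation (\ref{Rxy2}) (i.e. $C_3^2 C_6 C_7 C_8 = 1$) and $C_i^2 = 1$ for $4 \le i \le 8$. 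A bookkeeping of the exponents $6, 14, 11, 4, 11, 1$ in (\ref{collect_formula}) modulo the orders of the $C_i$, using $(y,x,y,x) = (y,x,x,y) = C_8$ from (\ref{C8}), then collapses the whole product to $1$.

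**An alternative, perhaps cleaner route** for the exponent verification is to use the corollary's identity (\ref{collectionFormula}) directly: since $\gamma_2(G)$ has exponent $4$ (Lemmas \ref{C3_2}, \ref{g3}) and $\gamma_3(G)$ exponent $2$, and since (\ref{collectionFormula}) holds identically, one substitutes generic elements and reduces $(x^a y^b)^4$ via (\ref{collect_formula}) exactly as above; the point is that all the structural facts needed — commutativity of $\gamma_2$, the exponents of $\gamma_2, \gamma_3$, the divisibility relations among the $C_i$ — have already been isolated as Lemmas \ref{g2}--\ref{gh}. I would present the argument in the order: (1) $R \subseteq T$, giving the epimorphism onto $F_\Theta(x,y)$; (2) $G \in \Theta$, checking (\ref{4nilp}) and (\ref{metab}) in one line each and (\ref{exponent}) via Lemma \ref{gh} plus the collection-formula computation; (3) conclude the two epimorphisms are mutually inverse. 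Step (2)'s exponent computation is the only place requiring care, and it is purely a finite exponent-arithmetic check once Lemma \ref{gh} has removed the $\gamma_2$-part.
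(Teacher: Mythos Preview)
Your proposal is correct and follows essentially the same route as the paper: establish $R\subseteq T$ from Proposition~\ref{relations}, then show $G=N_4(x,y)/R\in\Theta$ by checking the three defining identities, with Lemma~\ref{gh} reducing the exponent law to $(x^{\alpha_1}y^{\alpha_2})^4=1$ and the collection formula~(\ref{collect_formula}) doing the rest. The only difference is in how the exponent check is organized: the paper sets $v(\alpha_1,\alpha_2)=(y^{\alpha_2},x^{\alpha_1})^2(y^{\alpha_2},x^{\alpha_1},y^{\alpha_2},y^{\alpha_2})(y^{\alpha_2},x^{\alpha_1},x^{\alpha_1},y^{\alpha_2})(y^{\alpha_2},x^{\alpha_1},x^{\alpha_1},x^{\alpha_1})$ and proves $v\equiv1$ by a two-variable induction, establishing the multiplicative recursions $v(\beta_1+1,\beta_2)=v(\beta_1,\beta_2)v(1,\beta_2)$ and $v(\beta_1,\beta_2+1)=v(\beta_1,\beta_2)v(\beta_1,1)$ via Lemma~\ref{q_d} and the multilinearity~(\ref{g4_powers}) of weight-$4$ commutators; your ``bookkeeping of exponents'' would unwind to exactly this computation, since the commutators $(y^b,x^a,\ldots)$ are not literally powers of the $C_i$ and must be expanded using those same lemmas before the reduction mod~$2$ can be performed.
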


\begin{proof}
In this proof we also denote the group $N_{4}\left( x,y\right) /R$ by $G$.

By Proposition \ref{relations} the relations $r=1$, where $r\in R$, are
conclusions from the identities which define the variety $\Theta $. So we
only must prove that $G\in \Theta $.

It is clear that the group $G$ is a nilpotent groups of class $4$.

As we said in the proof of the Lemma \ref{C3_2}, $G$ is a metabelian group.

Now we will prove that the group $G$ fulfills the identity (\ref{exponent}).
By Lemma \ref{gh}, it remains for us to prove now that for every $0\leq
\alpha _{1},\alpha _{2}\leq 3$ the%
\begin{equation*}
\left( x^{\alpha _{1}}y^{\alpha _{2}}\right) ^{4}=1
\end{equation*}%
holds in $G$. We substitute in (\ref{collect_formula}) $x^{\alpha _{1}}$
instead $x$ and $y^{\alpha _{2}}$ instead $y$. $\left( x^{\alpha
_{1}}\right) ^{4}=\left( y^{\alpha _{2}}\right) ^{4}=1$ holds in $G$.
Therefore we must only prove that%
\begin{equation*}
(y^{\alpha _{2}},x^{\alpha _{1}})^{6}(y^{\alpha _{2}},x^{\alpha
_{1}},y^{\alpha _{2}})^{14}(y^{\alpha _{2}},x^{\alpha _{1}},y^{\alpha
_{2}},y^{\alpha _{2}})^{11}(y^{\alpha _{2}},x^{\alpha _{1}},x^{\alpha
_{1}})^{4}\cdot
\end{equation*}%
\begin{equation*}
(y^{\alpha _{2}},x^{\alpha _{1}},x^{\alpha _{1}},y^{\alpha
_{2}})^{11}(y^{\alpha _{2}},x^{\alpha _{1}},x^{\alpha _{1}},x^{\alpha
_{1}})=1
\end{equation*}%
holds in $G$. By Lemmas \ref{C3_2}\ and \ref{g3} we have that%
\begin{equation*}
(y^{\alpha _{2}},x^{\alpha _{1}})^{6}=(y^{\alpha _{2}},x^{\alpha _{1}})^{2},
\end{equation*}%
\begin{equation*}
(y^{\alpha _{2}},x^{\alpha _{1}},y^{\alpha _{2}})^{14}=1,
\end{equation*}%
\begin{equation*}
(y^{\alpha _{2}},x^{\alpha _{1}},y^{\alpha _{2}},y^{\alpha
_{2}})^{11}=(y^{\alpha _{2}},x^{\alpha _{1}},y^{\alpha _{2}},y^{\alpha
_{2}}),
\end{equation*}%
\begin{equation*}
(y^{\alpha _{2}},x^{\alpha _{1}},x^{\alpha _{1}})^{4}=1,
\end{equation*}%
\begin{equation*}
(y^{\alpha _{2}},x^{\alpha _{1}},x^{\alpha _{1}},y^{\alpha
_{2}})^{11}=(y^{\alpha _{2}},x^{\alpha _{1}},x^{\alpha _{1}},y^{\alpha
_{2}}).
\end{equation*}%
We denote%
\begin{equation*}
v\left( \alpha _{1},\alpha _{2}\right) =(y^{\alpha _{2}},x^{\alpha
_{1}})^{2}(y^{\alpha _{2}},x^{\alpha _{1}},y^{\alpha _{2}},y^{\alpha
_{2}})(y^{\alpha _{2}},x^{\alpha _{1}},x^{\alpha _{1}},y^{\alpha
_{2}})(y^{\alpha _{2}},x^{\alpha _{1}},x^{\alpha _{1}},x^{\alpha _{1}})
\end{equation*}%
So it remains for us to prove that%
\begin{equation}
v\left( \alpha _{1},\alpha _{2}\right) =1  \label{ord4}
\end{equation}%
holds in $G$ for every $0\leq \alpha _{1},\alpha _{2}\leq 3$.

It is clear that (\ref{ord4}) holds in $G$ if $\alpha _{1}=0$ or $\alpha
_{2}=0$. If $\alpha _{1}=\alpha _{2}=1$, than by (\ref{collect_formula}), we
have that%
\begin{equation*}
v\left( 1,1\right)
=(y,x)^{2}(y,x,y,y)(y,x,x,y)(y,x,x,x)=C_{3}^{2}C_{7}C_{8}C_{6}=1.
\end{equation*}%
Now we will prove (\ref{ord4}) by induction on $\alpha _{1}$ and $\alpha
_{2} $. We suppose that (\ref{ord4}) holds for all $\alpha _{1},\alpha _{2}$
such that $\alpha _{1}\leq \beta _{1}$, $\alpha _{2}\leq \beta _{2}$ and $%
0\leq \alpha _{1},\alpha _{2}$. We have by (\ref{q_r_d}) that%
\begin{equation}
(y^{\beta _{2}},x^{\beta _{1}+1})^{2}=\left( y^{\beta _{2}},x\right)
^{2}\left( y^{\beta _{2}},x^{\beta _{1}}\right) ^{2}.  \label{x_q}
\end{equation}%
We have by (\ref{g4_powers}) that%
\begin{equation}
(y^{\beta _{2}},x^{\beta _{1}+1},y^{\beta _{2}},y^{\beta _{2}})=(y^{\beta
_{2}},x^{\beta _{1}},y^{\beta _{2}},y^{\beta _{2}})(y^{\beta
_{2}},x,y^{\beta _{2}},y^{\beta _{2}}),  \label{1x}
\end{equation}%
\begin{equation*}
(y^{\beta _{2}},x^{\beta _{1}+1},x^{\beta _{1}+1},y^{\beta _{2}})=(y^{\beta
_{2}},x,x,y^{\beta _{2}})^{\left( \beta _{1}+1\right) ^{2}}=
\end{equation*}%
\begin{equation*}
(y^{\beta _{2}},x,x,y^{\beta _{2}})^{\beta _{1}^{2}}(y^{\beta
_{2}},x,x,y^{\beta _{2}})^{2\beta _{1}}(y^{\beta _{2}},x,x,y^{\beta _{2}})
\end{equation*}%
and%
\begin{equation*}
(y^{\beta _{2}},x^{\beta _{1}+1},x^{\beta _{1}+1},x^{\beta
_{1}+1})=(y^{\beta _{2}},x,x,x)^{\left( \beta _{1}+1\right) ^{3}}=
\end{equation*}%
\begin{equation*}
(y^{\beta _{2}},x,x,x)^{\beta _{1}^{3}}(y^{\beta _{2}},x,x,x)^{3\beta
_{1}\left( \beta _{1}+1\right) }(y^{\beta _{2}},x,x,x).
\end{equation*}%
By Lemma \ref{g3} we have that%
\begin{equation*}
(y^{\beta _{2}},x,x,y^{\beta _{2}})^{2\beta _{1}}=(y^{\beta
_{2}},x,x,x)^{3\beta _{1}\left( \beta _{1}+1\right) }=1,
\end{equation*}%
because $\beta _{1}\left( \beta _{1}+1\right) $ is an even number. Hence%
\begin{equation}
(y^{\beta _{2}},x^{\beta _{1}+1},x^{\beta _{1}+1},y^{\beta _{2}})=(y^{\beta
_{2}},x,x,y^{\beta _{2}})^{\beta _{1}^{2}}(y^{\beta _{2}},x,x,y^{\beta
_{2}})=  \label{2x}
\end{equation}%
\begin{equation*}
(y^{\beta _{2}},x^{\beta _{1}},x^{\beta _{1}},y^{\beta _{2}})(y^{\beta
_{2}},x,x,y^{\beta _{2}})
\end{equation*}%
and%
\begin{equation}
(y^{\beta _{2}},x^{\beta _{1}+1},x^{\beta _{1}+1},x^{\beta
_{1}+1})=(y^{\beta _{2}},x,x,x)^{\beta _{1}^{3}}(y^{\beta _{2}},x,x,x)=
\label{3x}
\end{equation}%
\begin{equation*}
(y^{\beta _{2}},x^{\beta _{1}},x^{\beta _{1}},x^{\beta _{1}})(y^{\beta
_{2}},x,x,x).
\end{equation*}%
Therefore, by (\ref{x_q}), (\ref{1x}), (\ref{2x}), (\ref{3x}) and by our
hypothesis about $v\left( \beta _{1},\beta _{2}\right) $ and $v\left(
1,\beta _{2}\right) $, we have that 
\begin{equation*}
v\left( \beta _{1}+1,\beta _{2}\right) =v\left( \beta _{1},\beta _{2}\right)
v\left( 1,\beta _{2}\right) =1.
\end{equation*}

By (\ref{q_l_d}) we have that%
\begin{equation}
\left( y^{\beta _{2}+1},x^{\beta _{1}}\right) ^{2}=\left( y^{\beta
_{2}},x^{\beta _{1}}\right) ^{2}\left( y,x^{\beta _{1}}\right) ^{2},
\label{y_q}
\end{equation}%
And now, similarly to the previous arguments, we conclude that%
\begin{equation}
(y^{\beta _{2}+1},x^{\beta _{1}},x^{\beta _{1}},x^{\beta _{1}})=(y^{\beta
_{2}},x^{\beta _{1}},x^{\beta _{1}},x^{\beta _{1}})(y,x^{\beta
_{1}},x^{\beta _{1}},x^{\beta _{1}}),  \label{1y}
\end{equation}%
\begin{equation}
(y^{\beta _{2}+1},x^{\beta _{1}},x^{\beta _{1}},y^{\beta _{2}+1})=(y^{\beta
_{2}},x^{\beta _{1}},x^{\beta _{1}},y^{\beta _{2}})(y,x^{\beta
_{1}},x^{\beta _{1}},y),  \label{2y}
\end{equation}%
and%
\begin{equation}
(y^{\beta _{2}+1},x^{\beta _{1}},y^{\beta _{2}+1},y^{\beta
_{2}+1})=(y^{\beta _{2}},x^{\beta _{1}},y^{\beta _{2}},y^{\beta
_{2}})(y,x^{\beta _{1}},y,y).  \label{3y}
\end{equation}%
Hence, by (\ref{y_q}), (\ref{1y}), (\ref{2y}), (\ref{3y}) and by our
hypothesis about $v\left( \beta _{1},\beta _{2}\right) $ and $v\left( \beta
_{1},1\right) $,%
\begin{equation*}
v\left( \beta _{1},\beta _{2}+1\right) =v\left( \beta _{1},\beta _{2}\right)
v\left( \beta _{1},1\right) =1.
\end{equation*}%
Therefore we prove that (\ref{ord4}) holds in $G$ for every $0\leq \alpha
_{1},\alpha _{2}\leq 3$. This completes our proof.
\end{proof}

Now, when we know that $N_{4}\left( x,y\right) /R=F_{\Theta }\left(
x,y\right) $, we can prove the

\begin{corollary}
\label{theta}The Lemmas \ref{g2}, \ref{g3}, \ref{C3_2} and \ref{q_d} hold
when we consider as group $G$ an arbitrary group of the variety $\Theta $.
\end{corollary}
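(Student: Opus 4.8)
The plan is to isolate from Theorem \ref{freeGroup} and Proposition \ref{relations} the single additional fact that is needed, and then to rederive each lemma for an arbitrary $G\in\Theta$ by elementary commutator calculus. By Theorem \ref{freeGroup} the four lemmas already hold for $G=F_{\Theta}(x,y)$, so only the general case is at issue. By Proposition \ref{relations} the relation (\ref{Rxy2}), $C_{3}^{2}C_{6}C_{7}C_{8}=1$ in $F_{\Theta}(x,y)$, is a consequence of the identities defining $\Theta$; since $x,y$ freely generate $F_{\Theta}(x,y)$, this means that
\begin{equation*}
(a,b)^{2}=\bigl((a,b,b,b)(a,b,a,a)(a,b,a,b)\bigr)^{-1}
\end{equation*}
holds for all $a,b$ in every $G\in\Theta$; in particular $(a,b)^{2}\in\gamma_{4}(G)\subseteq Z(G)$, the inclusion holding because $G$ is nilpotent of class at most $4$ by (\ref{4nilp}). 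Lemma \ref{g2} for $G$ is exactly the metabelian identity (\ref{metab}).

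The heart of the argument is the equality $(a,b,c)^{2}=1$ for all $a,b,c\in G$. Put $t=(a,b)$; then $t^{2}\in Z(G)$, so $(t^{2},c)=1$. On the other hand, by (\ref{l_d}), $(t^{2},c)=(t,c)^{t}(t,c)=(t,c)(t,c,t)(t,c)$, and $(t,c,t)\in(\gamma_{3}(G),\gamma_{2}(G))\subseteq\gamma_{5}(G)=\{1\}$ because $(t,c)\in\gamma_{3}(G)$ and $t\in\gamma_{2}(G)$; hence $(a,b,c)^{2}=(t,c)^{2}=(t^{2},c)=1$. Next, $\gamma_{3}(G)=(\gamma_{2}(G),G)$ is generated by the elements $(a,b,c)$: for $u=\prod_{i}(a_{i},b_{i})\in\gamma_{2}(G)$ and $c\in G$, iterating (\ref{l_d}) gives $(u,c)=\prod_{i}(a_{i},b_{i},c)$, since all correction terms lie in $(\gamma_{3}(G),\gamma_{2}(G))\subseteq\gamma_{5}(G)=\{1\}$. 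As $\gamma_{3}(G)\subseteq\gamma_{2}(G)$ is abelian (by (\ref{metab})) and generated by elements of order dividing $2$, it has exponent $2$ --- this is Lemma \ref{g3} --- and hence its subgroup $\gamma_{4}(G)$ also has exponent $2$. For Lemma \ref{C3_2}: given $h\in\gamma_{2}(G)$, write $h=\prod_{i}(a_{i},b_{i})$; since $\gamma_{2}(G)$ is abelian, $h^{2}=\prod_{i}(a_{i},b_{i})^{2}\in\gamma_{4}(G)$ by the displayed identity.

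Finally, Lemma \ref{q_d} needs no new idea: its proof uses only the universal group identities (\ref{l_d}), (\ref{r_d}), (\ref{i_d}) and Lemmas \ref{g2}, \ref{C3_2}, \ref{g3} --- from the last two one gets that $\gamma_{4}(G)$ has exponent $2$ and $\gamma_{2}(G)$ exponent dividing $4$, which are the only structural facts invoked there --- so the same computation establishes (\ref{q_l_d})--(\ref{q_i_r}) in $G$. The one step I expect not to be pure formalism is the one displayed above: recognising, via Theorem \ref{freeGroup} and Proposition \ref{relations}, that the square of a commutator $(a,b)$ falls into the centre of $G$, so that the square of a length-$3$ commutator collapses; everything else is routine commutator manipulation in a metabelian group of nilpotency class $4$.
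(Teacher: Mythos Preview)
Your proof is correct and follows essentially the same approach as the paper: both extract from Proposition~\ref{relations} (via the free group $F_{\Theta}(x,y)$) the key fact that $(a,b)^{2}\in\gamma_{4}(G)$, note that Lemma~\ref{g2} is just the metabelian identity, and observe that the proof of Lemma~\ref{q_d} carries over verbatim. The only variation is in Lemma~\ref{g3}: the paper re-applies the homomorphism $F_{\Theta}(x,y)\to G$ with the second argument taken in $\gamma_{2}(G)$ to force $(a,b)^{2}\in\gamma_{5}(G)=\{1\}$, whereas you instead deduce $(a,b,c)^{2}=1$ directly from the centrality of $(a,b)^{2}$ via $(t^{2},c)=(t,c)^{2}$; both arguments are equally short and valid.
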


\begin{proof}
Lemma \ref{g2} is fulfilled by definition of the variety $\Theta $.

Now we will prove that every $G\in \Theta $ fulfills the conclusion of the
Lemma \ref{C3_2}. Every $h\in \gamma _{2}\left( G\right) $ is generated by
commutators $\left( a,b\right) $, such that $a,b\in G$. There exists the
homomorphism $\varphi :F_{\Theta }\left( x,y\right) \rightarrow G$ such that 
$\varphi \left( x\right) =b$, $\varphi \left( y\right) =a$. We apply $%
\varphi $ to (\ref{Rxy2}) and conclude that $\left( a,b\right) ^{2}\in
\gamma _{4}\left( G\right) $.

Also every $G\in \Theta $ fulfills the conclusion of the Lemma \ref{g3},
because the group $\gamma _{3}\left( G\right) $ is generated by the
commutators $\left( a,b\right) $, where $a\in G$, $b\in \gamma _{2}\left(
G\right) $. We also consider the homomorphism $\varphi :F_{\Theta }\left(
x,y\right) \rightarrow G$ from the previous part of the proof, apply it to (%
\ref{Rxy2}) and now, because $b\in \gamma _{2}\left( G\right) $, conclude
that $\left( a,b\right) ^{2}\in \gamma _{5}\left( G\right) $.

The proof of the fact that every $G\in \Theta $ fulfills the conclusion of
the Lemma \ref{q_d} coincides with the proof of the Lemma \ref{q_d} for the
group $N_{4}\left( x,y\right) /R$.
\end{proof}

\setcounter{corollary}{0}

\section{Applicable systems of words. Necessary conditions}

\setcounter{equation}{0}

\begin{proposition}
\label{AN1}If $W$ (see (\ref{syst_words}) ) is applicable system of words in
our variety $\Theta $ then always $w_{1}=1$, $w_{-1}\left( x\right) =x^{-1}$.
\end{proposition}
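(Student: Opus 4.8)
The plan is to exploit the defining property of an applicable system of words: for every $F=F(X)\in\mathrm{Ob}\,\Theta^{0}$ there is an isomorphism $s_{F}:F\to F_{W}^{\ast}$ with $s_{F}\mid_{X}=\mathrm{id}_{X}$, so in particular $F_{W}^{\ast}$ is again a free group of the variety $\Theta$ on $X$ and must satisfy every identity of $\Theta$. I would apply this first to $F=F_{\Theta}(x)$, the free group on one generator, which by the exponent identity \eqref{exponent} is cyclic of order $4$: $F_{\Theta}(x)=\{1,x,x^{2},x^{3}\}$. The constant $w_{1}$ lies in $F_{\Theta}(\varnothing)$, the free group on the empty set, which (again by \eqref{exponent} and the fact that all relators are commutators) is the trivial group; hence $w_{1}=1$ is forced immediately. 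For $w_{-1}$, note $w_{-1}(x)\in F_{\Theta}(x)$, so $w_{-1}(x)=x^{k}$ for some $k\in\{0,1,2,3\}$, and the task is to show $k=3$, i.e. $w_{-1}(x)=x^{-1}$.

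To pin down $k$ I would use that $s_{F_{\Theta}(x)}$ is an isomorphism $F_{\Theta}(x)\to (F_{\Theta}(x))_{W}^{\ast}$ fixing $x$; since it is in particular a bijection and the verbal operation $\omega^{\ast}$ for the unary operation $-1$ is $g\mapsto w_{-1}(g)=g^{k}$, the group $(F_{\Theta}(x))_{W}^{\ast}$ has underlying set $\{1,x,x^{2},x^{3}\}$, product still $x^{i}\cdot x^{j}=x^{i+j}$ (the binary verbal operation $w_{\cdot}$ evaluated on powers of a single element $x$ reduces, after abelianizing the cyclic group, to a fixed power of $xy$; but on a cyclic group of order $4$ we must argue this does not matter for the present step) and inversion $g\mapsto g^{k}$. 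For $(F_{\Theta}(x))_{W}^{\ast}$ to be a group at all we need $g\mapsto g^{k}$ to be a genuine two-sided inverse for whatever the binary operation $w_{\cdot}$ induces; once one checks, using that $s_{F}$ is an isomorphism of groups in the signature $\Omega=\{1,-1,\cdot\}$, that the unary operation must send each element to its inverse with respect to the new binary operation, the candidate powers $k=0,1,2$ are eliminated because $g\mapsto g^{0}=1$, $g\mapsto g$, $g\mapsto g^{2}$ are not inversions on a $4$-element group for any associative unital binary law compatible with $x$ being a free generator (e.g. an inversion must be a bijection, killing $k=0,2$, and must satisfy $x\cdot w_{-1}(x)=1$, which on the ordinary cyclic group forces $w_{-1}(x)=x^{-1}=x^{3}$). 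I would phrase this cleanly by invoking that $s_{F_{\Theta}(x)}$ transports the group structure, so $w_{-1}(x)$ is the inverse of $x$ in $F_{\Theta}(x)$ itself, whence $w_{-1}(x)=x^{-1}$.

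The main obstacle is making the argument about $w_{\cdot}$ airtight enough that one genuinely knows the binary operation on $(F_{\Theta}(x))_{W}^{\ast}$ restricted to powers of $x$ before having determined $w_{\cdot}$ in general; the clean way around this is \emph{not} to analyze $w_{\cdot}$ at all but to use the abstract fact that $s_{F}:F\to F_{W}^{\ast}$ is an isomorphism in the signature $\Omega$. An isomorphism commutes with all operations, including the $0$-ary operation $1$ and the $1$-ary operation $-1$: so $s_{F}(1_{F})=1_{F_{W}^{\ast}}=w_{1}$ and $s_{F}(a^{-1})=\big(s_{F}(a)\big)^{\ast(-1)}=w_{-1}\big(s_{F}(a)\big)$ for all $a\in F$. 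Taking $F=F_{\Theta}(\varnothing)$ gives $w_{1}=1$ since that group is trivial. Taking $F=F_{\Theta}(x)$ and $a=x$, and using $s_{F}(x)=x$ (the stabilizing condition \eqref{stab_bij}) together with $s_{F}(x^{-1})=x^{-1}$ — because $x^{-1}=x^{3}$ is built from $x$ by the original operations and $s_{F}$ is a homomorphism fixing $x$ — we get $x^{-1}=w_{-1}(x)$, which is exactly the claim. This reduces the whole proposition to the two trivial facts that $F_{\Theta}(\varnothing)=\{1\}$ and that an isomorphism fixing a generator fixes every element of a cyclic group, so I expect the write-up to be short.
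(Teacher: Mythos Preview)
Your final ``clean'' argument has a genuine gap. You write that $s_{F}(x^{-1})=x^{-1}$ ``because $x^{-1}=x^{3}$ is built from $x$ by the original operations and $s_{F}$ is a homomorphism fixing $x$.'' But $s_{F}$ is a homomorphism from $F$ (with operations $\cdot,\ ^{-1},\ 1$) to $F_{W}^{\ast}$ (with operations $w_{\cdot},\ w_{-1},\ w_{1}$), not an endomorphism of $F$ with its original structure. Hence
\[
s_{F}(x^{3})=s_{F}(x\cdot x\cdot x)=w_{\cdot}\bigl(w_{\cdot}(x,x),x\bigr),
\]
which at this stage you cannot evaluate: you have not yet shown that $w_{\cdot}(x,x)=x^{2}$, and that in turn requires knowing that the abelianized part of $w_{\cdot}(x,y)$ is $xy$ --- exactly the content of the \emph{next} proposition (Proposition~\ref{AN2_PR}). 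So the slogan ``an isomorphism fixing a generator fixes every element of a cyclic group'' does not apply, because the target carries a different group law.

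Your earlier case-by-case approach is the right one, and is essentially what the paper does. Your bijection argument correctly rules out $k=0$ and $k=2$: since $F_{W}^{\ast}$ is a group, $a\mapsto w_{-1}(a)=a^{k}$ must be a bijection of the four-element set, forcing $\gcd(k,4)=1$. (The paper instead eliminates $k=2$ by computing $s_{F}(x)=s_{F}\bigl((x^{-1})^{-1}\bigr)=w_{-1}(w_{-1}(x))=(x^{2})^{2}=1$.) However, your elimination of $k=1$ via ``$x\cdot w_{-1}(x)=1$'' repeats the same confusion: the inverse axiom in $F_{W}^{\ast}$ reads $w_{\cdot}\bigl(x,w_{-1}(x)\bigr)=w_{1}$, not $x\cdot w_{-1}(x)=1$, and you do not yet know $w_{\cdot}$. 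The fix the paper uses is simply injectivity of $s_{F}$: if $w_{-1}(x)=x$ then $s_{F}(x^{-1})=w_{-1}(x)=x=s_{F}(x)$, yet $x\neq x^{-1}$ in $F_{\Theta}(x)$.
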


\begin{proof}
We suppose that $W$ is an applicable system of words. $w_{1}\in F_{\Theta
}\left( \varnothing \right) =\left\{ 1\right\} $, so $w_{1}=1$.

$w_{-1}\left( x\right) \in F_{\Theta }\left( x\right) \cong 
\mathbb{Z}
_{4}$. We denote $F_{\Theta }\left( x\right) $ by $F$. Because $W$ is
applicable system of words, by Definition \ref{asw}, there exists the
isomorphism $s_{F}:F\rightarrow F_{W}^{\ast }$, such that $s_{F}\left(
x\right) =x$. We have that $s_{F}\left( x^{-1}\right) =w_{-1}\left(
s_{F}\left( x\right) \right) =w_{-1}\left( x\right) $. If $w_{-1}\left(
x\right) =1$, then $s_{F}\left( x^{-1}\right) =1$, but $s_{F}\left( 1\right)
=w_{1}=1$, but this contradicts the assumption that $s_{F}$ is an injective
mapping.

If $w_{-1}\left( x\right) =x$, then $s_{F}\left( x^{-1}\right)
=x=s_{F}\left( x\right) $ which gives the same contradiction.

If $w_{-1}\left( x\right) =x^{2}$, then $x=s_{F}\left( x\right) =s_{F}\left(
\left( x^{-1}\right) ^{-1}\right) =w_{-1}\left( w_{-1}\left( x\right)
\right) =w_{-1}\left( x^{2}\right) =\left( x^{2}\right) ^{2}=x^{4}=1$. It
also gives us a contradiction.

Therefore, there is only one possibility: $w_{-1}\left( x\right)
=x^{3}=x^{-1}$.
\end{proof}

For studying of words $w_{\cdot }\left( x,y\right) $ we need to consider the
group $F_{\Theta }\left( x,y\right) $. We denote this group by $G$.

Because $W$ is applicable system of words, by Definition \ref{asw}, there
exists the isomorphism $s_{G}:G\rightarrow G_{W}^{\ast }$, which fix $x$ and 
$y$.

\begin{proposition}
\label{AN2_PR}If $W$ (see (\ref{syst_words}) ) is applicable system of words
in our variety $\Theta $ then always%
\begin{equation}
w_{\cdot }\left( x,y\right) =xyC_{3}^{\alpha _{3}}C_{4}^{\alpha
_{4}}C_{5}^{\alpha _{5}}C_{6}^{\alpha _{6}}C_{7}^{\alpha _{7}},  \label{w}
\end{equation}%
where $0\leq \alpha _{3}<4$, $\alpha _{i}\in \left\{ 0,1\right\} $, when $%
4\leq i\leq 7$.
\end{proposition}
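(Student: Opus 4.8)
The plan is to combine the unique base representation of elements of $G=F_{\Theta }\left( x,y\right)$ with the fact that the isomorphism $s_{G}:G\rightarrow G_{W}^{\ast }$ is, in particular, a homomorphism of $\Omega $-algebras, so it must carry the nullary operation to the nullary operation.

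First I would record that, by Theorem \ref{freeGroup} and Proposition \ref{relations} together with the discussion following it, the group $G$ has base $\left\{ C_{1},\ldots ,C_{7}\right\} $ with $\left\vert C_{i}\right\vert =4$ for $1\leq i\leq 3$ and $\left\vert C_{i}\right\vert =2$ for $4\leq i\leq 7$, where $C_{1}=x$ and $C_{2}=y$. Hence the element $w_{\cdot }\left( x,y\right) \in G$ is uniquely of the form
\[
w_{\cdot }\left( x,y\right) =x^{\beta _{1}}y^{\beta _{2}}C_{3}^{\alpha _{3}}C_{4}^{\alpha _{4}}C_{5}^{\alpha _{5}}C_{6}^{\alpha _{6}}C_{7}^{\alpha _{7}},
\]
with $0\leq \beta _{1},\beta _{2},\alpha _{3}<4$ and $\alpha _{i}\in \left\{ 0,1\right\} $ for $4\leq i\leq 7$; so the whole task reduces to proving $\beta _{1}=\beta _{2}=1$.

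Next, since $W$ is applicable, there is an isomorphism $s_{G}:G\rightarrow G_{W}^{\ast }$ with $s_{G}\left( x\right) =x$ and $s_{G}\left( y\right) =y$; being a homomorphism of $\Omega $-algebras and using $w_{1}=1$ from Proposition \ref{AN1}, it sends the unit to the unit, $s_{G}\left( 1\right) =1$. Applying $s_{G}$ to the equality $x\cdot 1=x$ of $G$ gives $x=s_{G}\left( x\cdot 1\right) =w_{\cdot }\left( s_{G}\left( x\right) ,s_{G}\left( 1\right) \right) =w_{\cdot }\left( x,1\right) $. Substituting $y=1$ in the displayed expression for $w_{\cdot }$, every commutator occurring there has an argument equal to $1$ and therefore vanishes, so $w_{\cdot }\left( x,1\right) =x^{\beta _{1}}$; thus $x^{\beta _{1}}=x$ in $G$, and since $\left\vert x\right\vert =\left\vert C_{1}\right\vert =4$ we get $\beta _{1}=1$. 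Symmetrically, applying $s_{G}$ to $1\cdot y=y$ yields $y=w_{\cdot }\left( 1,y\right) =y^{\beta _{2}}$ (all commutators vanish when $x=1$), hence $\beta _{2}=1$, which gives exactly (\ref{w}).

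The argument is essentially routine once the base of $G$ is available; the only two points that need a little attention are that $s_{G}$, as a morphism of $\Omega $-algebras, is forced to map $1^{G}$ to $w_{1}=1$, and that substituting $1$ into any iterated commutator makes it trivial, so that $w_{\cdot }\left( x,1\right) $ and $w_{\cdot }\left( 1,y\right) $ collapse to pure powers of $x$ and of $y$ respectively. I do not expect any real obstacle here; the substantive work was already done in establishing the base of $F_{\Theta }\left( x,y\right) $.
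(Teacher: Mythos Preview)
Your proof is correct and follows essentially the same route as the paper's own argument: write $w_{\cdot}(x,y)$ in the form $x^{\beta_1}y^{\beta_2}\cdot(\text{commutator part})$, then apply $s_G$ to the identities $x\cdot 1=x$ and $1\cdot y=y$ (using $s_G(1)=w_1=1$ from Proposition~\ref{AN1}) to force $\beta_1=\beta_2=1$, since the commutator part dies under substitution of $1$ for either variable. The only cosmetic difference is that the paper groups the tail abstractly as some $g_2(x,y)\in\gamma_2(G)$ rather than writing out the base coordinates $C_3^{\alpha_3}\cdots C_7^{\alpha_7}$, but the mechanism is identical.
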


\begin{proof}
We use the considerations of \cite[Proposition 2.1]{TsurkovNilpotent}. $%
w_{\cdot }\left( x,y\right) \in G$, so $w_{\cdot }\left( x,y\right)
=x^{\alpha _{1}}y^{\alpha _{2}}g_{2}\left( x,y\right) $, where $g_{2}\left(
x,y\right) \in \gamma _{2}\left( G\right) $, $0\leq \alpha _{1},\alpha
_{2}<4 $.

We have that $x=s_{G}\left( x\cdot 1\right) =w_{\cdot }\left( s_{G}\left(
x\right) ,s_{G}\left( 1\right) \right) =w_{\cdot }\left( x,w_{1}\right)
=w_{\cdot }\left( x,1\right) =x^{\alpha _{1}}g_{2}\left( x,1\right)
=x^{\alpha _{1}}$ holds, because $g_{2}\left( x,1\right) $ is the result of
substitution of $1$ instead $y$ in $g_{2}\left( x,y\right) $. Therefore, $%
\alpha _{1}=1$. We obtain by similar computations that $\alpha _{2}=1$.
\end{proof}

In the next Proposition we will get the stronger result about word $w_{\cdot
}\left( x,y\right) $ from applicable system of words $W$.

\begin{proposition}
\label{AN2}If $W$ (see (\ref{syst_words}) ) is applicable system of words in
our variety $\Theta $ then always $w_{\cdot }\left( x,y\right)
=xyC_{3}^{\alpha _{3}}$, where $\alpha _{3}=0,1,2,3$.
\end{proposition}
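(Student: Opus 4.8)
The plan is to exploit the fact that, because $W$ is applicable, the map $s_{G}:G\rightarrow G_{W}^{\ast }$ with $G=F_{\Theta }\left( x,y\right) $, fixing $x$ and $y$, is an \emph{isomorphism} of $\Omega $-algebras (Definition \ref{asw}); hence $G_{W}^{\ast }$ is again a group, so the verbal operation $w_{\cdot }$ is associative on $G$:
\begin{equation*}
w_{\cdot }\left( w_{\cdot }\left( a,b\right) ,c\right) =w_{\cdot }\left(
a,w_{\cdot }\left( b,c\right) \right) \qquad \left( a,b,c\in G\right) .
\end{equation*}
Proposition \ref{AN2_PR} already gives $w_{\cdot }\left( x,y\right) =xyC_{3}^{\alpha _{3}}C_{4}^{\alpha _{4}}C_{5}^{\alpha _{5}}C_{6}^{\alpha _{6}}C_{7}^{\alpha _{7}}$ with $0\leq \alpha _{3}<4$ and $\alpha _{i}\in \left\{ 0,1\right\} $ for $4\leq i\leq 7$, so it remains only to prove that $\alpha _{4}=\alpha _{5}=\alpha _{6}=\alpha _{7}=0$.

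First I would specialise the associativity law at $a=b=x$, $c=y$, which yields the equation $w_{\cdot }\left( x^{2},y\right) =w_{\cdot }\left( x,w_{\cdot }\left( x,y\right) \right) $ inside $G$. Each side is the value of the word $xyC_{3}^{\alpha _{3}}\cdots C_{7}^{\alpha _{7}}$ at elements of $G$, so each can be brought to the normal form $C_{1}^{\beta _{1}}\cdots C_{7}^{\beta _{7}}$ by iterated use of the collection identities (\ref{l_d})--(\ref{i_d}), of their consequences (\ref{g4out})--(\ref{g4_powers}), of Lemmas \ref{g2}, \ref{g3}, \ref{C3_2}, \ref{q_d}, and of the relations of Proposition \ref{relations}; in particular $C_{i}^{2}=1$ for $4\leq i\leq 7$ and $C_{8}=C_{3}^{2}C_{6}C_{7}$, which clear away every length-four commutator. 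Writing the right-hand side as $w_{\cdot }\left( x,xy\cdot v\right) =x^{2}y\cdot v\cdot v\!\left( x,xy\cdot v\right) $ with $v=C_{3}^{\alpha _{3}}\cdots C_{7}^{\alpha _{7}}\in \gamma _{2}\left( G\right) $ the $\gamma _{2}$-factor of $w_{\cdot }$, and comparing the two normal forms coordinate by coordinate, I expect the $C_{3}$- and $C_{4}$-coordinates to agree automatically, the $C_{5}$-coordinate to force $\alpha _{4}\equiv 0$, and then the $C_{6}$-coordinate (using $\alpha _{4}=0$) to force $\alpha _{7}\equiv 0$; since $\alpha _{4},\alpha _{7}\in \left\{ 0,1\right\} $ this gives $\alpha _{4}=\alpha _{7}=0$.

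Next I would run the same argument for the specialisation at $a=y$, $b=y$, $c=x$, namely $w_{\cdot }\left( y^{2},x\right) =w_{\cdot }\left( y,w_{\cdot }\left( y,x\right) \right) $. This identity is the image of the previous one under the automorphism $x\leftrightarrow y$ of $G$, which sends $C_{3},\ldots ,C_{7}$ to $C_{3}^{-1},C_{5}^{-1},C_{4}^{-1},C_{7}^{-1},C_{6}^{-1}$ respectively; hence the mirror computation --- now with the decisive coordinates being those of $C_{4}$ and $C_{7}$ --- forces $\alpha _{5}=\alpha _{6}=0$. Combining the two steps gives $w_{\cdot }\left( x,y\right) =xyC_{3}^{\alpha _{3}}$ with $\alpha _{3}\in \left\{ 0,1,2,3\right\} $, which is the assertion.

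The only laborious part is the two coordinate computations: propagating (\ref{l_d})--(\ref{i_d}) through the nested substitution $w_{\cdot }\left( x,w_{\cdot }\left( x,y\right) \right) $, discarding all terms that land in $\gamma _{4}\left( G\right) \subseteq Z\left( G\right) $ or in $\gamma _{5}\left( G\right) =\left\{ 1\right\} $, and reducing modulo the relations (\ref{Rxy1})--(\ref{Rxy2}) of the group $G=N_{4}\left( x,y\right) /R$ from Theorem \ref{freeGroup}. (Alternatively one could extract all four conditions simultaneously from the associativity of $w_{\cdot }$ on the three-generated free group $F_{\Theta }\left( x,y,z\right) $, but restricting to two generators keeps every computation inside the group $G$ already analysed.)
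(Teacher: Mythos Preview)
Your first step is exactly the paper's: the instance $w_{\cdot}(x^{2},y)=w_{\cdot}(x,w_{\cdot}(x,y))$ is the paper's equation $(x\circ x)\circ y=x\circ(x\circ y)$, and comparing the $C_{5}$- and $C_{6}$-coordinates indeed forces $\alpha_{4}=\alpha_{7}=0$.

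Your second step, however, has a real gap. The specialisation at $a=b=y$, $c=x$, namely $(y\circ y)\circ x=y\circ(y\circ x)$, is \emph{literally} the image under the automorphism $\sigma:x\leftrightarrow y$ of your first equation --- you say so yourself. But $\sigma$ is an automorphism of $G$, and for any verbal operation $\sigma(w_{\cdot}(a,b))=w_{\cdot}(\sigma(a),\sigma(b))$; hence applying $\sigma$ to a valid equation yields an equivalent equation, carrying \emph{exactly the same} constraints on the parameters $\alpha_{3},\dots,\alpha_{7}$. Concretely, if the first instance reduces to $C_{5}^{f(\alpha)}C_{6}^{g(\alpha)}=1$, its $\sigma$-image reduces to $C_{4}^{-f(\alpha)}C_{7}^{-g(\alpha)}=1$: the coordinate \emph{labels} swap, but the functions $f,g$ of the $\alpha_{i}$ do not. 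The parameters $\alpha_{4},\alpha_{5},\alpha_{6},\alpha_{7}$ are attached to the fixed word $w_{\cdot}$ and are not permuted by $\sigma$; so from $(y,y,x)$ you again obtain only $\alpha_{4}=\alpha_{7}=0$, not $\alpha_{5}=\alpha_{6}=0$.

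To extract the remaining constraints you need an associativity instance that is \emph{not} in the $\sigma$-orbit of $(x,x,y)$. The paper uses $(x,y,y)$, i.e.\ $x\circ(y\circ y)=(x\circ y)\circ y$; carrying out the collection there (with $\alpha_{4}=\alpha_{7}=0$ already in hand) and comparing the $C_{4}$- and $C_{7}$-coordinates gives $\alpha_{3}^{2}+\alpha_{5}\equiv\alpha_{3}$ and $\alpha_{3}\alpha_{5}+\alpha_{5}+\alpha_{6}\equiv 0\pmod 2$, whence $\alpha_{5}=\alpha_{6}=0$. Replacing your second step by this instance (or its $\sigma$-image $(y,x,x)$) makes the argument go through.
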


\begin{proof}
The equalities $x\left( xy\right) =\left( xx\right) y$ and $x\left(
yy\right) =\left( xy\right) y$ hold in $G=F_{\Theta }\left( x,y\right) $. We
apply the isomorphism $s_{G}:G\rightarrow G_{W}^{\ast }$ to the both hands
of the first equality and have that%
\begin{equation*}
s_{G}\left( x\left( xy\right) \right) =w_{\cdot }\left( s_{G}\left( x\right)
,s_{G}\left( xy\right) \right) =
\end{equation*}%
\begin{equation*}
w_{\cdot }\left( s_{G}\left( x\right) ,w_{\cdot }\left( s_{G}\left( x\right)
,s_{G}\left( y\right) \right) \right) =w_{\cdot }\left( x,w_{\cdot }\left(
x,y\right) \right)
\end{equation*}%
and%
\begin{equation*}
s_{G}\left( \left( xx\right) y\right) =w_{\cdot }\left( s_{G}\left(
xx\right) ,s_{G}\left( y\right) \right) =
\end{equation*}%
\begin{equation*}
w_{\cdot }\left( w_{\cdot }\left( s_{G}\left( x\right) ,s_{G}\left( x\right)
\right) ,s_{G}\left( y\right) \right) =w_{\cdot }\left( w_{\cdot }\left(
x,x\right) ,y\right) .
\end{equation*}%
Therefore%
\begin{equation*}
w_{\cdot }\left( x,w_{\cdot }\left( x,y\right) \right) =w_{\cdot }\left(
w_{\cdot }\left( x,x\right) ,y\right) .
\end{equation*}%
We conclude be similar computations from the second equality that%
\begin{equation*}
w_{\cdot }\left( x,w_{\cdot }\left( y,y\right) \right) =w_{\cdot }\left(
w_{\cdot }\left( x,y\right) ,y\right)
\end{equation*}%
holds. If we will denote the operation defined by the word (\ref{w}) by
symbol $\circ $, then we can rewrite these equalities in the form 
\begin{equation}
x\circ \left( x\circ y\right) =\left( x\circ x\right) \circ y  \label{2X}
\end{equation}%
and%
\begin{equation}
x\circ \left( y\circ y\right) =\left( x\circ y\right) \circ y.  \label{2Y}
\end{equation}

Now we will compute the left hand of (\ref{2X}). We have that%
\begin{equation*}
x\circ \left( x\circ y\right) =x\circ xyC_{3}^{\alpha _{3}}C_{4}^{\alpha
_{4}}C_{5}^{\alpha _{5}}C_{6}^{\alpha _{6}}C_{7}^{\alpha _{7}}=
\end{equation*}%
\begin{equation}
xxyC_{3}^{\alpha _{3}}C_{4}^{\alpha _{4}}C_{5}^{\alpha _{5}}C_{6}^{\alpha
_{6}}C_{7}^{\alpha _{7}}\cdot L_{3}^{\alpha _{3}}L_{4}^{\alpha
_{4}}L_{5}^{\alpha _{5}}L_{6}^{\alpha _{6}}L_{7}^{\alpha _{7}},
\label{xc(xcy)}
\end{equation}%
where%
\begin{equation}
L_{3}=(xyC_{3}^{\alpha _{3}}C_{4}^{\alpha _{4}}C_{5}^{\alpha
_{5}}C_{6}^{\alpha _{6}}C_{7}^{\alpha _{7}},x)=(xyC_{3}^{\alpha
_{3}}C_{4}^{\alpha _{4}}C_{5}^{\alpha _{5}},x)  \label{L3_def}
\end{equation}%
by (\ref{g4out}),%
\begin{equation}
L_{4}=\left( xyC_{3}^{\alpha _{3}}C_{4}^{\alpha _{4}}C_{5}^{\alpha
_{5}}C_{6}^{\alpha _{6}}C_{7}^{\alpha _{7}},x,xyC_{3}^{\alpha
_{3}}C_{4}^{\alpha _{4}}C_{5}^{\alpha _{5}}C_{6}^{\alpha _{6}}C_{7}^{\alpha
_{7}}\right) =\left( xyC_{3}^{\alpha _{3}},x,xyC_{3}^{\alpha _{3}}\right)
\label{L4_def}
\end{equation}%
by (\ref{g3out}),%
\begin{equation}
L_{5}=\left( xyC_{3}^{\alpha _{3}}C_{4}^{\alpha _{4}}C_{5}^{\alpha
_{5}}C_{6}^{\alpha _{6}}C_{7}^{\alpha _{7}},x,x\right) =\left(
xyC_{3}^{\alpha _{3}},x,x\right)  \label{L5_def}
\end{equation}%
by (\ref{g3out}),%
\begin{equation}
L_{6}=\left( xyC_{3}^{\alpha _{3}}C_{4}^{\alpha _{4}}C_{5}^{\alpha
_{5}}C_{6}^{\alpha _{6}}C_{7}^{\alpha _{7}},x,x,x\right) =\left(
xy,x,x,x\right)  \label{L6_def}
\end{equation}%
by (\ref{g2out}),%
\begin{equation}
L_{7}=(xyC_{3}^{\alpha _{3}}C_{4}^{\alpha _{4}}C_{5}^{\alpha
_{5}}C_{6}^{\alpha _{6}}C_{7}^{\alpha _{7}},x,xyC_{3}^{\alpha
_{3}}C_{4}^{\alpha _{4}}C_{5}^{\alpha _{5}}C_{6}^{\alpha _{6}}C_{7}^{\alpha
_{7}},xyC_{3}^{\alpha _{3}}C_{4}^{\alpha _{4}}C_{5}^{\alpha
_{5}}C_{6}^{\alpha _{6}}C_{7}^{\alpha _{7}})=  \label{L7_def}
\end{equation}%
\begin{equation*}
(xy,x,xy,xy)
\end{equation*}%
by (\ref{g2out}).

By (\ref{L3_def}) and (\ref{l_d}) we have that%
\begin{equation}
L_{3}=(xy,x)\left( xy,x,C_{3}^{\alpha _{3}}C_{4}^{\alpha _{4}}C_{5}^{\alpha
_{5}}\right) \left( C_{3}^{\alpha _{3}}C_{4}^{\alpha _{4}}C_{5}^{\alpha
_{5}},x\right) .  \label{L3decomp}
\end{equation}%
By (\ref{l_d}) the equality%
\begin{equation}
(xy,x)=(x,x)(x,x,y)(y,x)=(y,x)  \label{(xy,x)}
\end{equation}%
holds. By (\ref{g3out}) and (\ref{(xy,x)}) we conclude that%
\begin{equation}
\left( xy,x,C_{3}^{\alpha _{3}}C_{4}^{\alpha _{4}}C_{5}^{\alpha _{5}}\right)
=\left( xy,x,C_{3}^{\alpha _{3}}\right) =\left( (y,x),(y,x)^{\alpha
_{3}}\right) =1.  \label{(xy,x,C3)}
\end{equation}%
By (\ref{l_d_r_d}) we have that%
\begin{equation}
(C_{3}^{\alpha _{3}}C_{4}^{\alpha _{4}}C_{5}^{\alpha _{5}},x)=\left(
C_{3},x\right) ^{\alpha _{3}}(C_{4},x)^{\alpha _{4}}(C_{5},x)^{\alpha
_{5}}=C_{5}^{\alpha _{3}}C_{8}^{\alpha _{4}}C_{6}^{\alpha _{5}}.
\label{(C3C4C5,x)}
\end{equation}%
Hence, by (\ref{L3decomp}), (\ref{(xy,x)}), (\ref{(xy,x,C3)}) and (\ref%
{(C3C4C5,x)}) we have that%
\begin{equation}
L_{3}=C_{3}C_{5}^{\alpha _{3}}C_{8}^{\alpha _{4}}C_{6}^{\alpha _{5}}.
\label{L3}
\end{equation}

By (\ref{l_d}), (\ref{l_d_r_d}), (\ref{(xy,x)}) and (\ref{(xy,x,C3)}) we
conclude that%
\begin{equation}
\left( xyC_{3}^{\alpha _{3}},x\right) =\left( xy,x\right) \left(
xy,x,C_{3}^{\alpha _{3}}\right) \left( C_{3}^{\alpha _{3}},x\right)
=C_{3}C_{5}^{\alpha _{3}}.  \label{(xyC3^a3,x)}
\end{equation}%
By (\ref{L4_def}), (\ref{(xyC3^a3,x)}), (\ref{l_d_r_d}), (\ref{r_d}), (\ref%
{g2out}) and (\ref{g4_powers}), we have that%
\begin{equation*}
L_{4}=\left( C_{3}C_{5}^{\alpha _{3}},xyC_{3}^{\alpha _{3}}\right) =\left(
C_{3},xyC_{3}^{\alpha _{3}}\right) \left( C_{5},xyC_{3}^{\alpha _{3}}\right)
^{\alpha _{3}}=
\end{equation*}%
\begin{equation}
\left( C_{3},C_{3}^{\alpha _{3}}\right) (C_{3},xy)(C_{3},xy,C_{3}^{\alpha
_{3}})\left( C_{5},x\right) ^{\alpha _{3}}\left( C_{5},y\right) ^{\alpha
_{3}}.  \label{L4decomp}
\end{equation}%
By (\ref{r_d}) and (\ref{C8}) the equalities%
\begin{equation}
\left( C_{3},xy\right) =\left( C_{3},y\right) \left( C_{3},x\right) \left(
C_{3},x,y\right) =C_{4}C_{5}C_{8}  \label{(C3,xy)}
\end{equation}%
fulfills. Therefore, by (\ref{L4decomp}), (\ref{(C3,xy)}), (\ref{C8}) and
because $(C_{3},xy,C_{3}^{\alpha _{3}})\in \gamma _{5}\left( G\right) $ the%
\begin{equation}
L_{4}=C_{4}C_{5}C_{8}C_{6}^{\alpha _{3}}C_{8}^{\alpha
_{3}}=C_{4}C_{5}C_{6}^{\alpha _{3}}C_{8}^{\alpha _{3}+1}  \label{L4}
\end{equation}%
holds.

By (\ref{L5_def}), (\ref{(xyC3^a3,x)}) and (\ref{l_d_r_d}) we have that%
\begin{equation}
L_{5}=\left( C_{3}C_{5}^{\alpha _{3}},x\right) =\left( C_{3},x\right) \left(
C_{5},x\right) ^{\alpha _{3}}=C_{5}C_{6}^{\alpha _{3}}.  \label{L5}
\end{equation}

By (\ref{L6_def}), (\ref{L7_def}) and (\ref{g4_powers}) we can conclude that%
\begin{equation}
L_{6}=\left( y,x,x,x\right) =C_{6}  \label{L6}
\end{equation}%
and%
\begin{equation}
L_{7}=(y,x,x,x)(y,x,y,x)(y,x,x,y)(y,x,y,y)=C_{6}C_{7},  \label{L7}
\end{equation}%
because, by (\ref{C8}) and (\ref{Rxy1}), $(y,x,y,x)(y,x,x,y)=C_{8}^{2}=1$.

Therefore, by (\ref{xc(xcy)}), (\ref{L3}), (\ref{L4}), (\ref{L5}), (\ref{L6}%
), (\ref{L7}), (\ref{Rxy1}), we have that the left hand of (\ref{2X}) is
equal to%
\begin{equation*}
x\circ \left( x\circ y\right) =x^{2}yC_{3}^{\alpha _{3}}C_{4}^{\alpha
_{4}}C_{5}^{\alpha _{5}}C_{6}^{\alpha _{6}}C_{7}^{\alpha _{7}}\cdot
\end{equation*}%
\begin{equation*}
C_{3}^{\alpha _{3}}C_{5}^{\alpha _{3}^{2}}C_{8}^{\alpha _{3}\alpha
_{4}}C_{6}^{\alpha _{3}\alpha _{5}}\cdot C_{4}^{\alpha _{4}}C_{5}^{\alpha
_{4}}C_{6}^{\alpha _{3}\alpha _{4}}C_{8}^{\left( \alpha _{3}+1\right) \alpha
_{4}}\cdot C_{5}^{\alpha _{5}}C_{6}^{\alpha _{3}\alpha _{5}}\cdot
\end{equation*}%
\begin{equation*}
C_{6}^{\alpha _{6}}\cdot C_{6}^{\alpha _{7}}C_{7}^{\alpha _{7}}=
\end{equation*}%
\begin{equation}
x^{2}yC_{3}^{2\alpha _{3}}C_{5}^{\alpha _{3}^{2}+\alpha _{4}}C_{6}^{\alpha
_{3}\alpha _{4}+\alpha _{7}}C_{8}^{\alpha _{4}}.  \label{x*(x*y)}
\end{equation}

The right hand of (\ref{2X}) is equal to%
\begin{equation}
\left( x\circ x\right) \circ y=x^{2}\circ y=x^{2}yS_{3}^{\alpha
_{3}}S_{4}^{\alpha _{4}}S_{5}^{\alpha _{5}}S_{6}^{\alpha _{6}}S_{7}^{\alpha
_{7}}  \label{(xcx)cy}
\end{equation}%
where%
\begin{equation}
S_{3}=(y,x^{2}),  \label{S3_def}
\end{equation}%
\begin{equation}
S_{4}=\left( y,x^{2},y\right) ,  \label{S4_def}
\end{equation}%
\begin{equation}
S_{5}=\left( y,x^{2},x^{2}\right) ,  \label{S5_def}
\end{equation}%
\begin{equation}
S_{6}=\left( y,x^{2},x^{2},x^{2}\right) ,  \label{S6_def}
\end{equation}%
\begin{equation}
S_{7}=(y,x^{2},y,y).  \label{S7_def}
\end{equation}%
By (\ref{S3_def}) and (\ref{r_d}) we have that%
\begin{equation}
S_{3}=(y,x)(y,x)(y,x,x)=C_{3}^{2}C_{5}.  \label{S3}
\end{equation}%
By Lemma (\ref{C3_2}) $C_{3}^{2}\in \gamma _{4}\left( G\right) $, hence, by (%
\ref{S4_def}), (\ref{g4out}) and (\ref{C8}),%
\begin{equation}
S_{4}=\left( S_{3},y\right) =\left( C_{5},y\right) =C_{8}.  \label{S4}
\end{equation}%
By (\ref{S5_def}), (\ref{S3}), (\ref{g4out}), (\ref{g4_powers}) and (\ref%
{Rxy1}) we have that%
\begin{equation}
S_{5}=\left( S_{3},x^{2}\right) =\left( C_{5},x^{2}\right) =C_{6}^{2}=1.
\label{S5}
\end{equation}%
Also by (\ref{S6_def}) and (\ref{S5}) 
\begin{equation}
S_{6}=\left( S_{5},x^{2}\right) =1.  \label{S6}
\end{equation}%
By (\ref{S7_def}) and (\ref{S3})%
\begin{equation}
S_{7}=(S_{3},y,y)=1  \label{S7}
\end{equation}%
because $S_{3}\in \gamma _{3}\left( G\right) $. Therefore, by (\ref{(xcx)cy}%
), (\ref{S3}), (\ref{S4}), (\ref{S5}), (\ref{S6}) and (\ref{S7}) we have that%
\begin{equation}
\left( x\circ x\right) \circ y=x^{2}yC_{3}^{2\alpha _{3}}C_{5}^{\alpha
_{3}}C_{8}^{\alpha _{4}}  \label{(x*x)*y}
\end{equation}

By (\ref{2X}) we conclude from (\ref{x*(x*y)}) and (\ref{(x*x)*y}) that%
\begin{equation*}
x^{2}yC_{3}^{2\alpha _{3}}C_{5}^{\alpha _{3}^{2}+\alpha _{4}}C_{6}^{\alpha
_{3}\alpha _{4}+\alpha _{7}}C_{8}^{\alpha _{4}}=x^{2}yC_{3}^{2\alpha
_{3}}C_{5}^{\alpha _{3}}C_{8}^{\alpha _{4}}.
\end{equation*}%
We compare the exponents of the basic elements $C_{5}$ and $C_{6}$ in both
sides of this equality and deduce these two congruences:%
\begin{equation*}
\alpha _{3}^{2}+\alpha _{4}\equiv \alpha _{3}\left( \func{mod}2\right) ,
\end{equation*}%
\begin{equation*}
\alpha _{3}\alpha _{4}+\alpha _{7}\equiv 0\left( \func{mod}2\right) .
\end{equation*}%
When $\alpha _{3}\equiv 0\left( \func{mod}2\right) $ both when $\alpha
_{3}\equiv 1\left( \func{mod}2\right) $, we conclude from these congruences
that $\alpha _{4}\equiv 0\left( \func{mod}2\right) \ $and $\alpha _{7}\equiv
0\left( \func{mod}2\right) $. Therefore, the word $w_{\cdot }\left(
x,y\right) $ in the applicable system of words necessary has a form%
\begin{equation}
w_{\cdot }\left( x,y\right) =xyC_{3}^{\alpha _{3}}C_{5}^{\alpha
_{5}}C_{6}^{\alpha _{6}},  \label{w_r}
\end{equation}%
where $0\leq \alpha _{3}<4$, $\alpha _{5},\alpha _{6}\in \left\{ 0,1\right\} 
$.

Now we will compute the right hand of (\ref{2Y}) when $\circ $ is the verbal
operation defined by the word (\ref{w_r}):%
\begin{equation*}
\left( x\circ y\right) \circ y=w_{\cdot }\left( xyC_{3}^{\alpha
_{3}}C_{5}^{\alpha _{5}}C_{6}^{\alpha _{6}},y\right) =xyC_{3}^{\alpha
_{3}}C_{5}^{\alpha _{5}}C_{6}^{\alpha _{6}}yQ_{3}^{\alpha _{3}}Q_{5}^{\alpha
_{5}}Q_{6}^{\alpha _{6}}=
\end{equation*}%
\begin{equation}
=xy^{2}C_{3}^{\alpha _{3}}\left( C_{3}^{\alpha _{3}},y\right) C_{5}^{\alpha
_{5}}\left( C_{5}^{\alpha _{5}},y\right) C_{6}^{\alpha _{6}}Q_{3}^{\alpha
_{3}}Q_{5}^{\alpha _{5}}Q_{6}^{\alpha _{6}},  \label{(x*y)*y_b}
\end{equation}%
because $C_{6}^{\alpha _{6}}\in \gamma _{4}\left( G\right) $. Here%
\begin{equation}
Q_{3}=(y,xyC_{3}^{\alpha _{3}}C_{5}^{\alpha _{5}}C_{6}^{\alpha
_{6}})=(y,xyC_{3}^{\alpha _{3}}C_{5}^{\alpha _{5}}),  \label{Q3_def}
\end{equation}%
by (\ref{g4out});%
\begin{equation}
Q_{5}=\left( y,xyC_{3}^{\alpha _{3}}C_{5}^{\alpha _{5}}C_{6}^{\alpha
_{6}},xyC_{3}^{\alpha _{3}}C_{5}^{\alpha _{5}}C_{6}^{\alpha _{6}}\right)
=\left( y,xyC_{3}^{\alpha _{3}},xyC_{3}^{\alpha _{3}}\right) ,
\label{Q5_def}
\end{equation}%
by (\ref{g3out}); and%
\begin{equation*}
Q_{6}=\left( y,xyC_{3}^{\alpha _{3}}C_{5}^{\alpha _{5}}C_{6}^{\alpha
_{6}},xyC_{3}^{\alpha _{3}}C_{5}^{\alpha _{5}}C_{6}^{\alpha
_{6}},xyC_{3}^{\alpha _{3}}C_{5}^{\alpha _{5}}C_{6}^{\alpha _{6}}\right) =
\end{equation*}%
\begin{equation}
=\left( y,xy,xy,xy\right)  \label{Q6_def}
\end{equation}%
by (\ref{g2out}).

By (\ref{l_d_r_d}) we have that%
\begin{equation}
\left( C_{3}^{\alpha _{3}},y\right) =\left( C_{3},y\right) ^{\alpha
_{3}}=(y,x,y)^{\alpha _{3}}=C_{4}^{\alpha _{3}},  \label{(C3^a3,y)}
\end{equation}%
and%
\begin{equation}
\left( C_{5}^{\alpha _{5}},y\right) =\left( C_{5},y\right) ^{\alpha
_{5}}=\left( y,x,x,y\right) ^{\alpha _{5}}=C_{8}^{\alpha _{5}}
\label{(C5^a5,y)}
\end{equation}%
by (\ref{C8}).

We obtain the next equality from (\ref{r_d}), (\ref{l_d_r_d}), (\ref{Rxy1})
and Lemma \ref{g2}:%
\begin{equation*}
\left( y,xyC_{3}^{\alpha _{3}}\right) =(y,C_{3}^{\alpha
_{3}})(y,xy)(y,xy,C_{3}^{\alpha _{3}})=
\end{equation*}%
\begin{equation}
(y,C_{3})^{\alpha _{3}}(y,y)(y,x)(y,x,y)=C_{3}C_{4}^{\alpha _{3}+1}.
\label{(y,xyC3^a3)}
\end{equation}

After this we conclude from (\ref{Q3_def}), (\ref{r_d}), (\ref{(C5^a5,y)}), (%
\ref{(y,xyC3^a3)}), (\ref{C8}) and (\ref{Rxy1}) that%
\begin{equation}
Q_{3}=(y,xyC_{3}^{\alpha _{3}}C_{5}^{\alpha _{5}})=(y,C_{5}^{\alpha
_{5}})(y,xyC_{3}^{\alpha _{3}})(y,xyC_{3}^{\alpha _{3}},C_{5}^{\alpha
_{5}})=C_{3}C_{4}^{\alpha _{3}+1}C_{8}^{\alpha _{5}}.  \label{Q3}
\end{equation}

By (\ref{Q5_def}), (\ref{(y,xyC3^a3)}), (\ref{l_d_r_d}), (\ref{r_d}), (\ref%
{g4_powers}), (\ref{Rxy1}), (\ref{C8}) and because $\left(
C_{3},xy,C_{3}^{\alpha _{3}}\right) \in \gamma _{5}\left( G\right) $ we have
that%
\begin{equation*}
Q_{5}=\left( C_{3}C_{4}^{\alpha _{3}+1},xyC_{3}^{\alpha _{3}}\right) =\left(
C_{3},xyC_{3}^{\alpha _{3}}\right) \left( C_{4},xyC_{3}^{\alpha _{3}}\right)
^{\alpha _{3}+1}=
\end{equation*}%
\begin{equation*}
\left( C_{3},C_{3}^{\alpha _{3}}\right) \left( C_{3},xy\right) \left(
C_{3},xy,C_{3}^{\alpha _{3}}\right) \left( C_{4},xy\right) ^{\alpha
_{3}+1}=\left( C_{3},xy\right) \left( C_{4},xy\right) ^{\alpha _{3}+1}=
\end{equation*}%
\begin{equation}
\left( C_{3},y\right) \left( C_{3},x\right) \left( C_{3},x,y\right) \left(
C_{4},y\right) ^{\alpha _{3}+1}\left( C_{4},x\right) ^{\alpha _{3}+1}=
\label{Q5}
\end{equation}%
\begin{equation*}
C_{4}C_{5}C_{8}C_{7}^{\alpha _{3}+1}C_{8}^{\alpha
_{3}+1}=C_{4}C_{5}C_{7}^{\alpha _{3}+1}C_{8}^{\alpha _{3}}
\end{equation*}

From (\ref{g4_powers}), (\ref{C8}) and (\ref{Rxy1}) we conclude that%
\begin{equation*}
Q_{6}=\left( y,xy,xy,xy\right) =
\end{equation*}%
\begin{equation}
\left( y,x,x,x\right) \left( y,x,x,y\right) \left( y,x,y,x\right) \left(
y,x,y,y\right) =C_{6}C_{7}.  \label{Q6}
\end{equation}

Therefore, by (\ref{(x*y)*y_b}), (\ref{(C3^a3,y)}), (\ref{(C5^a5,y)}), (\ref%
{Q3}), (\ref{Q5}), (\ref{Q6}), (\ref{Rxy1}) and by Lemma \ref{g2} 
\begin{equation*}
\left( x\circ y\right) \circ y=
\end{equation*}%
\begin{equation*}
xy^{2}C_{3}^{\alpha _{3}}C_{4}^{\alpha _{3}}C_{5}^{\alpha _{5}}C_{8}^{\alpha
_{5}}C_{6}^{\alpha _{6}}C_{3}^{\alpha _{3}}C_{4}^{\alpha _{3}^{2}+\alpha
_{3}}C_{8}^{\alpha _{3}\alpha _{5}}C_{4}^{\alpha _{5}}C_{5}^{\alpha
_{5}}C_{7}^{\alpha _{3}\alpha _{5}+\alpha _{5}}C_{8}^{\alpha _{3}\alpha
_{5}}C_{6}^{\alpha _{6}}C_{7}^{\alpha _{6}}=
\end{equation*}%
\begin{equation}
xy^{2}C_{3}^{2\alpha _{3}}C_{4}^{\alpha _{3}^{2}+\alpha _{5}}C_{7}^{\alpha
_{3}\alpha _{5}+\alpha _{5}+\alpha _{6}}C_{8}^{\alpha _{5}}.
\label{(x*y)*y_f}
\end{equation}

Now we will compute the left side of (\ref{2Y}). As above $\circ $ is the
verbal operation defined by the word (\ref{w_r}). We have that%
\begin{equation}
x\circ \left( y\circ y\right) =w_{\cdot }\left( x,y^{2}\right)
=xy^{2}U_{3}^{\alpha _{3}}U_{5}^{\alpha _{5}}U_{6}^{\alpha _{6}},
\label{x*(y*y)_b}
\end{equation}%
where%
\begin{equation}
U_{3}=(y^{2},x),  \label{U3_def}
\end{equation}%
\begin{equation}
U_{5}=\left( y^{2},x,x\right) ,  \label{U5_def}
\end{equation}%
\begin{equation}
U_{6}=\left( y^{2},x,x,x\right) .  \label{U6_def}
\end{equation}%
By (\ref{U3_def}), (\ref{l_d}) and by Lemma \ref{g2} we conclude that%
\begin{equation}
U_{3}=(y,x)(y,x,y)(y,x)=C_{3}^{2}C_{4}.  \label{U3}
\end{equation}%
By (\ref{U5_def}), (\ref{U3}), (\ref{l_d_r_d}) and, because, by Lemma \ref%
{C3_2}, $\left( C_{3}^{2},x\right) \in \gamma _{5}\left( G\right) $, we
deduce that%
\begin{equation}
U_{5}=\left( U_{3},x\right) =\left( C_{3}^{2},x\right) \left( C_{4},x\right)
=\left( C_{4},x\right) =C_{8}.  \label{U5}
\end{equation}%
Also by (\ref{U6_def}), (\ref{g4_powers}) and (\ref{Rxy1}) we have that%
\begin{equation}
U_{6}=\left( y,x,x,x\right) ^{2}=C_{6}^{2}=1.  \label{U6}
\end{equation}%
Therefore, by (\ref{x*(y*y)_b}), (\ref{U3}), (\ref{U5}), (\ref{U6}) and by
Lemma \ref{g2} we obtain%
\begin{equation}
x\circ \left( y\circ y\right) =xy^{2}C_{3}^{2\alpha _{3}}C_{4}^{\alpha
_{3}}C_{8}^{\alpha _{5}}.  \label{x*(y*y)_f}
\end{equation}

By (\ref{2Y}) we conclude from (\ref{(x*y)*y_f}) and (\ref{x*(y*y)_f}) that%
\begin{equation*}
xy^{2}C_{3}^{2\alpha _{3}}C_{4}^{\alpha _{3}^{2}+\alpha _{5}}C_{7}^{\alpha
_{3}\alpha _{5}+\alpha _{5}+\alpha _{6}}C_{8}^{\alpha
_{5}}=xy^{2}C_{3}^{2\alpha _{3}}C_{4}^{\alpha _{3}}C_{8}^{\alpha _{5}}.
\end{equation*}%
We compare the exponents of the basic elements $C_{4}$ and $C_{7}$ in both
sides of this equality and deduce these two congruences:%
\begin{equation*}
\alpha _{3}^{2}+\alpha _{5}\equiv \alpha _{3}\left( \func{mod}2\right) ,
\end{equation*}%
\begin{equation*}
\alpha _{3}\alpha _{5}+\alpha _{5}+\alpha _{6}\equiv 0\left( \func{mod}%
2\right) .
\end{equation*}%
When $\alpha _{3}\equiv 0\left( \func{mod}2\right) $ both when $\alpha
_{3}\equiv 1\left( \func{mod}2\right) $, we conclude from these congruences
that $\alpha _{5}\equiv 0\left( \func{mod}2\right) \ $and $\alpha _{6}\equiv
0\left( \func{mod}2\right) $. Therefore, the word $w_{\cdot }\left(
x,y\right) $ in the applicable system of words necessary has a form%
\begin{equation}
w_{\cdot }\left( x,y\right) =xyC_{3}^{\alpha _{3}},  \label{aw}
\end{equation}%
where $0\leq \alpha _{3}<4$.
\end{proof}

From Propositions \ref{AN1} and \ref{AN2} we conclude that in the variety $%
\Theta $ the applicable system of words can have only these four forms%
\begin{equation}
W_{\alpha }=\left\{ w_{1},w_{-1}\left( x\right) =x^{-1},w_{\cdot }\left(
x,y\right) =xyC_{3}^{\alpha }\right\} ,  \label{syst_words_n}
\end{equation}%
where $0\leq \alpha <4$.

\section{Applicable systems of words. Sufficient conditions\label{asw_sc}}

\setcounter{equation}{0}

We will prove in this section that all the systems of words mentioned in (%
\ref{syst_words_n}) are applicable.

It is obvious that the system of words $W_{0}$ is applicable (see Subsection %
\ref{bijections_words}).

In the begging of this section we will prove that the systems of words $%
W_{1} $ and $W_{2}$ are applicable and after this we will conclude that the
systems of words $W_{3}$ is applicable.

\subsection{System of words $W_{1}$}

Now we consider the system of words $W_{1}$. In this system of words $%
w_{\cdot }\left( x,y\right) =xy\left( y,x\right) =yx$. We denote by $%
\underset{1}{\circ }$ the verbal operation defined by the word $w_{\cdot
}\left( x,y\right) =yx$. We will prove that for every $G\in \Theta $ the
universal algebra $G_{W_{1}}^{\ast }$ is also a group of the variety $\Theta 
$.

It is clear that for every $G\in \Theta $ and every $x\in G$ the identities%
\begin{equation*}
x\underset{1}{\circ }1=1\underset{1}{\circ }x=x
\end{equation*}%
hold.

\begin{proposition}
\label{ass_1}The operation $\underset{1}{\circ }$ is an associative
operation.
\end{proposition}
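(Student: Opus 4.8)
The plan is to reduce the statement to the elementary observation that $\underset{1}{\circ }$ is nothing but the opposite of the group multiplication. First I would record that, by the identity $yx=xy\left( y,x\right) $ which holds in every group, the word
\begin{equation*}
w_{\cdot }\left( x,y\right) =xy\left( y,x\right)
\end{equation*}
already coincides with $yx$ inside $F_{\Theta }\left( x,y\right) $. Hence, for an arbitrary $G\in \Theta $ and arbitrary $g,h\in G$, evaluating the word $w_{\cdot }$ by the substitution homomorphism $F_{\Theta }\left( x,y\right) \rightarrow G$, $x\mapsto g$, $y\mapsto h$, yields
\begin{equation*}
g\underset{1}{\circ }h=hg,
\end{equation*}
i.e. the operation $\underset{1}{\circ }$ on $G_{W_{1}}^{\ast }$ is exactly the multiplication of $G$ read in the reverse order.

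Second, associativity is then immediate from the associativity of the original operation $\cdot $ of $G$. For all $g,h,k\in G$ we get
\begin{equation*}
\left( g\underset{1}{\circ }h\right) \underset{1}{\circ }k=k\left( hg\right) =\left( kh\right) g=g\underset{1}{\circ }\left( kh\right) =g\underset{1}{\circ }\left( h\underset{1}{\circ }k\right) ,
\end{equation*}
which is exactly the required identity. (One may equally carry out the same computation without referring to $G$, working in $F_{\Theta }\left( x,y,z\right) $ and verifying $\left( x\underset{1}{\circ }y\right) \underset{1}{\circ }z=x\underset{1}{\circ }\left( y\underset{1}{\circ }z\right) $ as an identity of the variety; the manipulation is identical.)

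There is essentially no obstacle here: the only point that requires (minimal) care is the first step, namely recognising via $yx=xy\left( y,x\right) $ that the verbal operation attached to $w_{\cdot }\left( x,y\right) =xy\left( y,x\right) $ is precisely the opposite multiplication $g\underset{1}{\circ }h=hg$. Once this is observed, associativity of $\underset{1}{\circ }$ is simply inherited from the associativity of $\cdot $ in every $G\in \Theta $, and the proof is complete.
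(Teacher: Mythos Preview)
Your proposal is correct and follows exactly the paper's approach: the paper has already observed, just before the proposition, that $w_{\cdot}(x,y)=xy(y,x)=yx$, so $\underset{1}{\circ}$ is the opposite multiplication, and its proof is the one-line computation $(x\underset{1}{\circ}y)\underset{1}{\circ}z=z(yx)=(zy)x=x\underset{1}{\circ}(y\underset{1}{\circ}z)$, identical to yours.
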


\begin{proof}
For every $G\in \Theta $ and every $x,y,z\in G$ we have that $\left( x%
\underset{1}{\circ }y\right) \underset{1}{\circ }z=z\left( yx\right) =\left(
zy\right) x=x\underset{1}{\circ }\left( y\underset{1}{\circ }z\right) $.
\end{proof}

We denote for every $m\in 
\mathbb{Z}
$ by $x^{\underset{1}{\circ }m}$ the degree $m$ defined system of words $%
W_{1}$ of the element $x\in G$, where $G\in \Theta $. It is clear that $x^{%
\underset{1}{\circ }m}=x^{m}$, so for every $G\in \Theta $ and every $x\in G$
the identities%
\begin{equation*}
x\underset{1}{\circ }x^{\underset{1}{\circ }-1}=x^{\underset{1}{\circ }-1}%
\underset{1}{\circ }x=1
\end{equation*}%
and%
\begin{equation*}
x^{\underset{1}{\circ }4}=1
\end{equation*}%
hold.

For every $G\in \Theta $ and every $x,y\in G$ we will denote $\left(
x,y\right) _{1}=x^{-1}\underset{1}{\circ }y^{-1}\underset{1}{\circ }x%
\underset{1}{\circ }y$.

\begin{proposition}
\label{metab_1}For every $G\in \Theta $ and every $x_{1},x_{2},x_{3},x_{4}%
\in G$ the identity%
\begin{equation*}
\left( \left( x_{1},x_{2}\right) _{1},\left( x_{3},x_{4}\right) _{1}\right)
_{1}=1
\end{equation*}%
holds.
\end{proposition}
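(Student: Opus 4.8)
The plan is to unwind both $\underset{1}{\circ}$-commutators into ordinary group words, using only the associativity of $\underset{1}{\circ}$ (Proposition \ref{ass_1}) together with the defining rule $a\underset{1}{\circ}b=ba$, and then to finish with the metabelian law of $\Theta$.

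First I would record that for every $G\in\Theta$ and all $x,y\in G$,
\begin{equation*}
(x,y)_{1}=x^{-1}\underset{1}{\circ}y^{-1}\underset{1}{\circ}x\underset{1}{\circ}y=yxy^{-1}x^{-1}=(y^{-1},x^{-1}),
\end{equation*}
the middle equality being obtained by associating the $\underset{1}{\circ}$-product to the left and repeatedly replacing $a\underset{1}{\circ}b$ by $ba$. In particular $(x,y)_{1}$ is an honest group commutator, hence $(x,y)_{1}\in\gamma_{2}(G)$.

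I would then set $u=(x_{1},x_{2})_{1}$ and $v=(x_{3},x_{4})_{1}$ and apply the very same computation with $u,v$ in the roles of $x,y$, obtaining
\begin{equation*}
\left(\left(x_{1},x_{2}\right)_{1},\left(x_{3},x_{4}\right)_{1}\right)_{1}=(u,v)_{1}=vuv^{-1}u^{-1}.
\end{equation*}
By the previous step $u,v\in\gamma_{2}(G)$, and $\gamma_{2}(G)$ is abelian for every $G\in\Theta$ (this is precisely the metabelian identity (\ref{metab}); see Lemma \ref{g2} and Corollary \ref{theta}). Therefore $uv=vu$, so $vuv^{-1}u^{-1}=1$, which is the assertion.

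The single point that needs attention is the reversal of the order of the factors produced by $a\underset{1}{\circ}b=ba$: one must check that, after reading the word $x^{-1}y^{-1}xy$ "backwards", what remains is still a genuine commutator landing in $\gamma_{2}(G)$. Conceptually this is nothing but the observation that $G_{W_{1}}^{\ast}$ is the opposite group $G^{\mathrm{op}}$, which is isomorphic to $G$ via $g\mapsto g^{-1}$ and hence again metabelian; so there is really no serious obstacle here, and no use is made of the exponent or of the nilpotency laws of $\Theta$ — only of (\ref{metab}).
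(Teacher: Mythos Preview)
Your proof is correct and follows essentially the same approach as the paper: both compute $(x,y)_{1}=yxy^{-1}x^{-1}=(y^{-1},x^{-1})$ and then reduce the outer $\underset{1}{\circ}$-commutator to an ordinary commutator of two elements of $\gamma_{2}(G)$, which vanishes by the metabelian law. The only cosmetic difference is that the paper unwinds everything to the explicit expression $\bigl((x_{3}^{-1},x_{4}^{-1}),(x_{1}^{-1},x_{2}^{-1})\bigr)$ before invoking (\ref{metab}), whereas you phrase the last step as ``$u,v\in\gamma_{2}(G)$ and $\gamma_{2}(G)$ is abelian''.
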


\begin{proof}
We have that%
\begin{equation}
\left( x,y\right) _{1}=y^{-1}x^{-1}\underset{1}{\circ }yx=yxy^{-1}x^{-1}=%
\left( y^{-1},x^{-1}\right) =\left( x^{-1},y^{-1}\right) ^{-1}.
\label{commut_1}
\end{equation}%
Therefore%
\begin{equation*}
\left( \left( x_{1},x_{2}\right) _{1},\left( x_{3},x_{4}\right) _{1}\right)
_{1}=\left( \left( x_{2}^{-1},x_{1}^{-1}\right) ,\left(
x_{4}^{-1},x_{3}^{-1}\right) \right) _{1}=
\end{equation*}%
\begin{equation*}
\left( \left( x_{4}^{-1},x_{3}^{-1}\right) ^{-1},\left(
x_{2}^{-1},x_{1}^{-1}\right) ^{-1}\right) =\left( \left(
x_{3}^{-1},x_{4}^{-1}\right) ,\left( x_{1}^{-1},x_{2}^{-1}\right) \right) =1.
\end{equation*}
\end{proof}

\begin{proposition}
\label{nilp4_1}For every $G\in \Theta $ and every $%
x_{1},x_{2},x_{3},x_{4},x_{5}\in G$ the identity%
\begin{equation*}
\left( \left( \left( \left( x_{1},x_{2}\right) _{1},x_{3}\right)
_{1},x_{4}\right) _{1},x_{5}\right) _{1}=1
\end{equation*}%
holds.
\end{proposition}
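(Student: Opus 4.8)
The plan is to reduce the nested $\underset{1}{\circ}$-commutator to ordinary group commutators by means of (\ref{commut_1}), and then simply to track membership in the lower central series of $G$. The point is that, as was established just before this proposition, the $\underset{1}{\circ}$-inverse of an element coincides with its ordinary inverse, so the expression $\left(a,b\right)_1=a^{-1}\underset{1}{\circ}b^{-1}\underset{1}{\circ}a\underset{1}{\circ}b$ involves only ordinary inverses, and the computation in the proof of Proposition \ref{metab_1} shows that $\left(a,b\right)_1=\left(b^{-1},a^{-1}\right)$ for all $a,b\in G$.

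The key observation is therefore: for arbitrary $c,a\in G$ we have $\left(c,a\right)_1=\left(a^{-1},c^{-1}\right)$, which is an ordinary commutator of an arbitrary element of $G$ with $c^{-1}$; hence if $c\in\gamma_k(G)$ then $c^{-1}\in\gamma_k(G)$ and $\left(c,a\right)_1\in\left(G,\gamma_k(G)\right)=\gamma_{k+1}(G)$. In other words, forming a $\underset{1}{\circ}$-commutator with an arbitrary second argument raises the lower-central-series degree by one, exactly as for ordinary commutators. Applying this repeatedly: $\left(x_1,x_2\right)_1$ is an ordinary commutator, so it lies in $\gamma_2(G)$; therefore $\left(\left(x_1,x_2\right)_1,x_3\right)_1\in\gamma_3(G)$; therefore $\left(\left(\left(x_1,x_2\right)_1,x_3\right)_1,x_4\right)_1\in\gamma_4(G)$; and finally
\[
\left(\left(\left(\left(x_1,x_2\right)_1,x_3\right)_1,x_4\right)_1,x_5\right)_1\in\gamma_5(G).
\]
Since $G\in\Theta$ is nilpotent of class at most $4$, we have $\gamma_5(G)=\left\{1\right\}$, which is the claimed identity. (Alternatively one may stop one step earlier: if $e\in\gamma_4(G)$ then $e^{-1}\in\gamma_4(G)\subseteq Z(G)$, so $\left(e,x_5\right)_1=\left(x_5^{-1},e^{-1}\right)=1$.)

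I do not expect any genuine obstacle here. The only things needing care are the order in which the arguments appear in $\left(\cdot,\cdot\right)_1$ under the opposite multiplication $\underset{1}{\circ}$ — but this is exactly what (\ref{commut_1}) records — and the fact, already noted in this section, that the inverse operation of $G_{W_1}^{\ast}$ agrees with that of $G$, so that the nilpotency of $G\in\mathfrak{N}_4$ may be invoked directly for the algebra $G_{W_1}^{\ast}$.
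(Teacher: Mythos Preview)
Your proof is correct and follows essentially the same route as the paper: both use the identity (\ref{commut_1}) to convert $\underset{1}{\circ}$-commutators into ordinary commutators and then appeal to $\gamma_5(G)=\{1\}$. The only difference is cosmetic: the paper proves by induction the explicit formula $\bigl((x_1,x_2)_1,\ldots,x_n\bigr)_1=\bigl((x_1^{-1},x_2^{-1}),\ldots,x_n^{-1}\bigr)^{-1}$ and then sets $n=5$, whereas you track only membership in $\gamma_k(G)$ at each step, which is all that is actually needed.
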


\begin{proof}
By (\ref{commut_1}) we have that%
\begin{equation}
\left( \left( \left( x_{1},x_{2}\right) _{1},\ldots \right)
_{1},x_{n}\right) _{1}=\left( \left( \left( x_{1}^{-1},x_{2}^{-1}\right)
,\ldots \right) ,x_{n}^{-1}\right) ^{-1}  \label{com_ind_1}
\end{equation}%
holds when $n=2$. We suppose that (\ref{com_ind_1}) holds. So, by (\ref%
{commut_1}), we have that%
\begin{equation*}
\left( \left( \left( \left( x_{1},x_{2}\right) _{1},\ldots \right)
_{1},x_{n}\right) _{1},x_{n+1}\right) _{1}=
\end{equation*}%
\begin{equation*}
\left( \left( \left( \left( x_{1},x_{2}\right) _{1},\ldots \right)
_{1},x_{n}\right) _{1}^{-1},x_{n+1}^{-1}\right) ^{-1}=\left( \left( \left(
\left( x_{1}^{-1},x_{2}^{-1}\right) ,\ldots \right) ,x_{n}^{-1}\right)
,x_{n+1}^{-1}\right) ^{-1}.
\end{equation*}%
Therefore, we proved (\ref{com_ind_1}) for every $n\geq 2$. In particular,
for every $G\in \Theta $ and every $x_{1},x_{2},x_{3},x_{4},x_{5}\in G$ we
have that%
\begin{equation*}
\left( \left( \left( \left( x_{1},x_{2}\right) _{1},x_{3}\right)
_{1},x_{4}\right) _{1},x_{5}\right) _{1}=\left( x_{1}^{-1},\ldots
,x_{5}^{-1}\right) ^{-1}=1\text{.}
\end{equation*}
\end{proof}

Therefore, we proved that for every $G\in \Theta $ the universal algebra $%
G_{W_{1}}^{\ast }$ is also a group of the variety $\Theta $.

In particular, we have that $F_{W_{1}}^{\ast }\in \Theta $ for every $F\in 
\mathrm{Ob}\Theta ^{0}$. So, for every $F=F_{\Theta }\left( X\right) \in 
\mathrm{Ob}\Theta ^{0}$ there exists a homomorphism $s_{F}^{\left( 1\right)
}:F\rightarrow F_{W_{1}}^{\ast }$ such that $s_{F\mid X}^{\left( 1\right) }=%
\mathrm{id}_{X}$.

\begin{proposition}
\label{asw_1}The system of words $W_{1}$ is an applicable system of words.
\end{proposition}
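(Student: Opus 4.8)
The plan is to check Definition \ref{asw} directly for $W_{1}$: for every $F=F_{\Theta}\left( X\right) \in \mathrm{Ob}\Theta ^{0}$ we must produce an \emph{isomorphism} $s_{F}:F\rightarrow F_{W_{1}}^{\ast }$ with $s_{F}|_{X}=\mathrm{id}_{X}$. Since it has already been shown above that $F_{W_{1}}^{\ast }\in \Theta $ for every such $F$, the homomorphism $s_{F}^{(1)}:F\rightarrow F_{W_{1}}^{\ast }$ with $s_{F}^{(1)}|_{X}=\mathrm{id}_{X}$ exists, so the whole job is to show that $s_{F}^{(1)}$ is bijective.

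Surjectivity is immediate. The underlying set of $F_{W_{1}}^{\ast }$ is that of $F$, and because $w_{\cdot }\left( x,y\right) =yx$ the verbal multiplication $\underset{1}{\circ }$ on $F$ is just the opposite of the original product; hence the sub-$\Omega $-algebra of $F_{W_{1}}^{\ast }$ generated by $X$ coincides, as a subset, with the subgroup of $F$ generated by $X$, i.e. with all of $F$. Therefore $\mathrm{im}\,s_{F}^{(1)}$ is a subalgebra of $F_{W_{1}}^{\ast }$ containing $X$, so it equals $F_{W_{1}}^{\ast }$.

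For injectivity I would exhibit an inverse. The key observation is that passing from $G$ to $G_{W_{1}}^{\ast }$ is an \emph{involution}: evaluating the verbal operations of $W_{1}$ inside $G_{W_{1}}^{\ast }$ returns the original operations of $G$, so $(G_{W_{1}}^{\ast })_{W_{1}}^{\ast }=G$ for every $G\in \Theta $ — a short computation with $w_{\cdot },w_{-1},w_{1}$ using $(a,b)=a^{-1}b^{-1}ab$. Combined with the fact established above that $G_{W_{1}}^{\ast }\in \Theta $ whenever $G\in \Theta $ — equivalently, that $\Theta $ is closed under forming opposite groups — this lets me use the word-reversal map of $F=F_{\Theta }\left( X\right) $. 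Indeed, the word-reversing anti-automorphism of the absolutely free group that fixes $X$ carries the verbal subgroup defining $\Theta $ into itself (if a law holds in $\Theta $, so does its reversal, because $\Theta $ is closed under opposites), hence it descends to an anti-automorphism $\rho _{F}$ of $F$ with $\rho _{F}|_{X}=\mathrm{id}_{X}$ and $\rho _{F}^{2}=\mathrm{id}_{F}$. Viewed as a map $F\rightarrow F_{W_{1}}^{\ast }$, $\rho _{F}$ is an $\Omega $-homomorphism fixing $X$, since $\rho _{F}\left( ab\right) =\rho _{F}\left( b\right) \rho _{F}\left( a\right) =\rho _{F}\left( a\right) \underset{1}{\circ }\rho _{F}\left( b\right) $ while inverses and the unit are unchanged; by uniqueness of such a homomorphism (here $F$ is free in $\Theta $ on $X$ and $F_{W_{1}}^{\ast }\in \Theta $) we get $\rho _{F}=s_{F}^{(1)}$. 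As $\rho _{F}$ is a bijection, $s_{F}^{(1)}$ is a bijective $\Omega $-homomorphism, i.e. an isomorphism $F\cong F_{W_{1}}^{\ast }$ fixing $X$.

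This is exactly the isomorphism demanded by Definition \ref{asw}, so $W_{1}\in \mathcal{ASW}$. The one genuinely non-formal point — the hard part — is the well-definedness of $\rho _{F}$ on $F_{\Theta }\left( X\right) $ rather than merely on the absolutely free group; this is precisely where Propositions \ref{ass_1}, \ref{metab_1} and \ref{nilp4_1}, together with the verification of the identity (\ref{exponent}) in $G_{W_{1}}^{\ast }$, are indispensable. Once $\Theta $ is known to be closed under opposites, everything else is routine opposite-group bookkeeping already packaged in (\ref{l_d})--(\ref{i_d}).
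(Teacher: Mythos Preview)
Your argument is correct and, at its core, uses the same idea as the paper: the construction $G\mapsto G_{W_{1}}^{\ast}$ is an involution (the opposite-group functor), so the canonical map $s_{F}^{(1)}$ has square the identity and is therefore bijective.

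The paper packages this more economically. Instead of separately constructing the word-reversal anti-automorphism $\rho_{F}$ on the absolutely free group, checking that it descends to $F_{\Theta}(X)$, and then invoking uniqueness to identify $\rho_{F}=s_{F}^{(1)}$, the paper simply computes
\[
\bigl(s_{F}^{(1)}\bigr)^{2}(ab)=s_{F}^{(1)}\bigl(s_{F}^{(1)}(b)\,s_{F}^{(1)}(a)\bigr)=\bigl(s_{F}^{(1)}\bigr)^{2}(a)\,\bigl(s_{F}^{(1)}\bigr)^{2}(b),
\]
so $\bigl(s_{F}^{(1)}\bigr)^{2}$ is a group endomorphism of $F$ fixing $X$, hence the identity. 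This sidesteps entirely the ``hard part'' you flag: once $F_{W_{1}}^{\ast}\in\Theta$ is known, the existence of $s_{F}^{(1)}$ is automatic by freeness, and no separate descent argument for $\rho_{F}$ is needed. Your route recovers the explicit description $s_{F}^{(1)}=\rho_{F}$, which is pleasant to have but not required for the proposition; the paper's two-line computation reaches bijectivity faster.
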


\begin{proof}
For every $F=F_{\Theta }\left( X\right) \in \mathrm{Ob}\Theta ^{0}$ and
every $a,b\in F$ we have that%
\begin{equation*}
\left( s_{F}^{\left( 1\right) }\right) ^{2}\left( ab\right) =s_{F}^{\left(
1\right) }\left( s_{F}^{\left( 1\right) }\left( a\right) \underset{1}{\circ }%
s_{F}^{\left( 1\right) }\left( b\right) \right) =s_{F}^{\left( 1\right)
}\left( s_{F}^{\left( 1\right) }\left( b\right) s_{F}^{\left( 1\right)
}\left( a\right) \right) =
\end{equation*}%
\begin{equation*}
\left( s_{F}^{\left( 1\right) }\right) ^{2}\left( b\right) \underset{1}{%
\circ }\left( s_{F}^{\left( 1\right) }\right) ^{2}\left( a\right) =\left(
s_{F}^{\left( 1\right) }\right) ^{2}\left( a\right) \left( s_{F}^{\left(
1\right) }\right) ^{2}\left( b\right) .
\end{equation*}%
So, $\left( s_{F}^{\left( 1\right) }\right) ^{2}:F\rightarrow F$ is a
homomorphism. The equality $\left( s_{F\mid X}^{\left( 1\right) }\right)
^{2}=\mathrm{id}_{X}$ holds, hence $\left( s_{F}^{\left( 1\right) }\right)
^{2}=\mathrm{id}_{F}$. Therefore, $s_{F}^{\left( 1\right) }$ is a bijection.
It means that $s_{F}^{\left( 1\right) }$ is an isomorphism. Hence $W_{1}$ is
a subject of Definition \ref{asw}.
\end{proof}

\subsection{System of words $W_{2}$}

We will prove in this subsection that the system of words $W_{2}$ is an
applicable system of words. $w_{\cdot }\left( x,y\right) =xy\left(
y,x\right) ^{2}$ in this system of words. As above we denote by $\underset{2}%
{\circ }$ the verbal operation defined by the word $w_{\cdot }\left(
x,y\right) =xy\left( y,x\right) ^{2}$. We will prove that for every $G\in
\Theta $ the universal algebra $G_{W_{2}}^{\ast }$ is also a group of the
variety $\Theta $.

It is clear that for every $G\in \Theta $ and every $x\in G$ the identities%
\begin{equation*}
x\underset{2}{\circ }1=1\underset{2}{\circ }x=x
\end{equation*}%
hold.

\begin{proposition}
\label{ass_2}The operation $\underset{2}{\circ }$ is an associative
operation.
\end{proposition}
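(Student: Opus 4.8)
The plan is to check the identity $\left(x \underset{2}{\circ} y\right) \underset{2}{\circ} z = x \underset{2}{\circ}\left(y \underset{2}{\circ} z\right)$ directly for every $G \in \Theta$ and every $x,y,z \in G$, by expanding both sides; the whole point will be that every ``correction term'' of the form $(a,b)^{2}$ that appears is central, so that it can be transported freely and contributes trivially to any further commutator.

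First I would collect the facts to be used. For any $a,b \in G$ we have $(a,b) \in \gamma_{2}(G)$, hence $(a,b)^{2} \in \gamma_{4}(G)$ by Lemma \ref{C3_2} (valid for every $G \in \Theta$ by Corollary \ref{theta}); moreover $\gamma_{4}(G) \subseteq Z(G)$ since $G$ is nilpotent of class at most $4$, and $\gamma_{4}(G)$ is abelian. I also have at my disposal the ``quadratic'' distributivity laws $(ab,c)^{2} = (a,c)^{2}(b,c)^{2}$ and $(a,bc)^{2} = (a,c)^{2}(a,b)^{2}$ from Lemma \ref{q_d} (again for every $G \in \Theta$ by Corollary \ref{theta}).

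Then I would compute the two sides. Putting $u = x \underset{2}{\circ} y = xy(y,x)^{2}$, the left side is $u \underset{2}{\circ} z = uz(z,u)^{2} = xy(y,x)^{2}z\,(z,xy(y,x)^{2})^{2}$; applying $(a,bc)^{2}=(a,c)^{2}(a,b)^{2}$ twice together with the centrality of $(y,x)^{2}$ reduces the last factor to $(z,xy)^{2}=(z,y)^{2}(z,x)^{2}$, and transporting the central factors to the right gives $\left(x \underset{2}{\circ} y\right) \underset{2}{\circ} z = xyz\,(y,x)^{2}(z,y)^{2}(z,x)^{2}$. Symmetrically, with $v = y \underset{2}{\circ} z = yz(z,y)^{2}$ the right side is $x \underset{2}{\circ} v = xv(v,x)^{2} = xyz(z,y)^{2}(yz(z,y)^{2},x)^{2}$; now $(ab,c)^{2}=(a,c)^{2}(b,c)^{2}$ and centrality reduce $(yz(z,y)^{2},x)^{2}$ to $(yz,x)^{2}=(y,x)^{2}(z,x)^{2}$, so $x \underset{2}{\circ}\left(y \underset{2}{\circ} z\right) = xyz\,(z,y)^{2}(y,x)^{2}(z,x)^{2}$. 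Since the three factors lie in the abelian group $\gamma_{4}(G)$, the two products coincide, which is exactly associativity.

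The computation is purely routine; the only place where care is needed is in verifying that each commutator manufactured along the way — in particular $(z,xy(y,x)^{2})^{2}$ and $(yz(z,y)^{2},x)^{2}$ — is central and that the inner $(y,x)^{2}$- and $(z,y)^{2}$-type factors drop out of it. I do not anticipate a genuine obstacle here, only bookkeeping.
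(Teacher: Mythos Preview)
Your proposal is correct and follows essentially the same approach as the paper's own proof: both expand $(x\underset{2}{\circ}y)\underset{2}{\circ}z$ and $x\underset{2}{\circ}(y\underset{2}{\circ}z)$ directly, use Lemma~\ref{C3_2} (via Corollary~\ref{theta}) to see that each $(a,b)^{2}$ lies in the central subgroup $\gamma_{4}(G)$, and apply the quadratic distributivity identities (\ref{q_l_d}), (\ref{q_r_d}) of Lemma~\ref{q_d} to reduce both sides to $xyz(y,x)^{2}(z,y)^{2}(z,x)^{2}$. The only cosmetic difference is that the paper drops the inner central factors from the commutators in one step (since $(y,x)^{2}\in Z(G)$ gives $(z,xy(y,x)^{2})=(z,xy)$ outright), whereas you phrase this via the quadratic law; the content is the same.
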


\begin{proof}
For every $G\in \Theta $ and every $x,y,z\in G$ we have that%
\begin{equation*}
\left( x\underset{2}{\circ }y\right) \underset{2}{\circ }z=xy\left(
y,x\right) ^{2}\underset{2}{\circ }z=xy\left( y,x\right) ^{2}z\left(
z,xy\left( y,x\right) ^{2}\right) ^{2}=
\end{equation*}%
\begin{equation*}
xyz\left( y,x\right) ^{2}\left( z,xy\right) ^{2}=xyz\left( y,x\right)
^{2}\left( z,y\right) ^{2}\left( z,x\right) ^{2}.
\end{equation*}%
In this computation we use Corollary \ref{theta} from Theorem \ref{freeGroup}%
, Lemma \ref{C3_2} and (\ref{q_r_d}). By similar computation we conclude that%
\begin{equation*}
x\underset{2}{\circ }\left( y\underset{2}{\circ }z\right) =x\underset{2}{%
\circ }yz\left( z,y\right) ^{2}=xyz\left( z,y\right) ^{2}\left( yz\left(
z,y\right) ^{2},x\right) ^{2}=
\end{equation*}%
\begin{equation*}
xyz\left( z,y\right) ^{2}\left( yz,x\right) ^{2}=xyz\left( z,y\right)
^{2}\left( y,x\right) ^{2}\left( z,x\right) ^{2}.
\end{equation*}
\end{proof}

As above we denote for every $m\in 
\mathbb{Z}
$ by $x^{\underset{2}{\circ }m}$ the degree $m$ defined system of words $%
W_{2}$ of the element $x\in G$, where $G\in \Theta $. And just as before, it
is clear that $x^{\underset{2}{\circ }m}=x^{m}$, so for every $G\in \Theta $
and every $x\in G$ the identities%
\begin{equation*}
x\underset{2}{\circ }x^{\underset{2}{\circ }-1}=x^{\underset{2}{\circ }-1}%
\underset{2}{\circ }x=1
\end{equation*}%
and%
\begin{equation*}
x^{\underset{2}{\circ }4}=1
\end{equation*}%
hold.

For every $G\in \Theta $ and every $x,y\in G$ we will denote $\left(
x,y\right) _{2}=x^{-1}\underset{2}{\circ }y^{-1}\underset{2}{\circ }x%
\underset{2}{\circ }y$.

\begin{proposition}
\label{commut_2}For every $G\in \Theta $ and every $x,y\in G$ the equality%
\begin{equation*}
\left( x,y\right) _{2}=\left( x,y\right)
\end{equation*}%
holds.
\end{proposition}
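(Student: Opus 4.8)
The plan is to exploit that every correction factor occurring in the definition of $\underset{2}{\circ }$ is central and of small order, so that the whole iterated product telescopes. First I would record the two structural facts that drive everything: by Lemma \ref{C3_2} (valid for an arbitrary $G\in \Theta $ by Corollary \ref{theta}) we have $\left( b,a\right) ^{2}\in \gamma _{4}\left( G\right) \subseteq Z\left( G\right) $ for all $a,b\in G$, and by Lemma \ref{g3} together with Corollary \ref{theta} (equivalently, because $\gamma _{2}\left( G\right) $ has exponent $4$) we have $\left( \left( b,a\right) ^{2}\right) ^{2}=1$. Thus each factor $\left( b,a\right) ^{2}$ is a central element of order dividing $2$; in particular such factors may be reordered freely, they are invisible to an outer commutator, and $\left( y,x\right) ^{8}=1$.

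Next I would iterate the associativity computation already carried out in the proof of Proposition \ref{ass_2}. That proof shows $\left( x\underset{2}{\circ }y\right) \underset{2}{\circ }z=xyz\left( y,x\right) ^{2}\left( z,y\right) ^{2}\left( z,x\right) ^{2}$; applying $\underset{2}{\circ }w$ once more, pulling the central factors to the left, and expanding $\left( w,xyz\right) ^{2}$ by repeated use of $(\ref{q_r_d})$, one obtains
\begin{equation*}
\left( \left( x\underset{2}{\circ }y\right) \underset{2}{\circ }z\right) \underset{2}{\circ }w=xyzw\left( y,x\right) ^{2}\left( z,y\right) ^{2}\left( z,x\right) ^{2}\left( w,z\right) ^{2}\left( w,y\right) ^{2}\left( w,x\right) ^{2}.
\end{equation*}
(More generally the same argument gives $x_{1}\underset{2}{\circ }\cdots \underset{2}{\circ }x_{n}=x_{1}\cdots x_{n}\prod_{i<j}\left( x_{j},x_{i}\right) ^{2}$, but only the case $n=4$ is needed here.)

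Then I would substitute $x_{1}=x^{-1}$, $x_{2}=y^{-1}$, $x_{3}=x$, $x_{4}=y$. The leading product collapses to $x^{-1}y^{-1}xy=\left( x,y\right) $. Of the six correction factors, $\left( x,x^{-1}\right) ^{2}=\left( y,y^{-1}\right) ^{2}=1$, while each of the remaining four, namely $\left( y^{-1},x^{-1}\right) ^{2}$, $\left( y,x^{-1}\right) ^{2}$, $\left( x,y^{-1}\right) ^{2}$ and $\left( y,x\right) ^{2}$, equals $\left( y,x\right) ^{2}$ by $(\ref{q_i_l})$, $(\ref{q_i_r})$ and $\left( x,y\right) ^{2}=\left( y,x\right) ^{-2}=\left( y,x\right) ^{2}$. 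Hence their total contribution is $\left( y,x\right) ^{8}=1$, and therefore $\left( x,y\right) _{2}=x^{-1}y^{-1}xy=\left( x,y\right) $.

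The computation is essentially bookkeeping: the only point requiring care is that every correction term genuinely lies in $Z\left( G\right) $ (so the reorderings above are legitimate) and genuinely squares to $1$, and this is precisely what Lemma \ref{C3_2}, Lemma \ref{g3} and Corollary \ref{theta} provide for an arbitrary $G\in \Theta $. I expect no real obstacle beyond keeping track of the six commutator-squares.
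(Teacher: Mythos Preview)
Your argument is correct and uses the same ingredients as the paper: centrality of $\left( b,a\right) ^{2}$ from Lemma~\ref{C3_2}, the identities $(\ref{q_l_d})$--$(\ref{q_i_r})$, and the fact that $\gamma _{2}\left( G\right) $ has exponent~$4$, all extended to arbitrary $G\in \Theta $ via Corollary~\ref{theta}. The only organizational difference is that the paper groups the product as $\left( x^{-1}\underset{2}{\circ }y^{-1}\right) \underset{2}{\circ }\left( x\underset{2}{\circ }y\right) $ and computes each half first, whereas you derive the full four-fold expansion $x_{1}\underset{2}{\circ }\cdots \underset{2}{\circ }x_{4}=x_{1}\cdots x_{4}\prod _{i<j}\left( x_{j},x_{i}\right) ^{2}$ and then substitute; both routes produce the same collection of commutator-squares and cancel them the same way.
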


\begin{proof}
We have by Corollary \ref{theta} from Theorem \ref{freeGroup} and Lemma \ref%
{C3_2}, by (\ref{g4out}), (\ref{q_l_d}), (\ref{q_r_d}), (\ref{q_i_l}), (\ref%
{q_i_r}) and (\ref{exponent}) that%
\begin{equation*}
\left( x,y\right) _{2}=x^{-1}y^{-1}\left( y^{-1},x^{-1}\right) ^{2}\underset{%
2}{\circ }xy\left( y,x\right) ^{2}=
\end{equation*}%
\begin{equation*}
x^{-1}y^{-1}\left( y^{-1},x^{-1}\right) ^{2}xy\left( y,x\right) ^{2}\left(
xy\left( y,x\right) ^{2},x^{-1}y^{-1}\left( y^{-1},x^{-1}\right) ^{2}\right)
^{2}=
\end{equation*}%
\begin{equation*}
\left( x,y\right) \left( y,x\right) ^{4}\left( xy,x^{-1}y^{-1}\right)
^{2}=\left( x,y\right) \left( xy,x^{-1}y^{-1}\right) ^{2}=
\end{equation*}%
\begin{equation*}
\left( x,y\right) \left( x,y^{-1}\right) ^{2}\left( y,x^{-1}\right)
^{2}=\left( x,y\right) .
\end{equation*}
\end{proof}

\begin{corollary}
For every $G\in \Theta $ and every $x_{1},x_{2},x_{3},x_{4},x_{5}\in G$ the
identities%
\begin{equation*}
\left( \left( x_{1},x_{2}\right) _{2},\left( x_{3},x_{4}\right) _{2}\right)
_{2}=1
\end{equation*}%
and%
\begin{equation*}
\left( \left( \left( \left( x_{1},x_{2}\right) _{2},x_{3}\right)
_{2},x_{4}\right) _{2},x_{5}\right) _{2}=1
\end{equation*}%
hold.
\end{corollary}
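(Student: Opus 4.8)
The plan is to derive both identities directly from Proposition \ref{commut_2}, which asserts that $(a,b)_2=(a,b)$ for \emph{arbitrary} $a,b\in G$, together with the defining identities (\ref{metab}) and (\ref{4nilp}) of the variety $\Theta$. The decisive point is that Proposition \ref{commut_2} imposes no restriction on its two arguments, so it may be applied repeatedly, the value of one application serving as an argument of the next.

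First I would treat the metabelian identity. Proposition \ref{commut_2} gives $(x_1,x_2)_2=(x_1,x_2)$ and $(x_3,x_4)_2=(x_3,x_4)$; applying it once more with the arguments $a=(x_1,x_2)$ and $b=(x_3,x_4)$ yields
\[
((x_1,x_2)_2,(x_3,x_4)_2)_2=(a,b)_2=(a,b)=((x_1,x_2),(x_3,x_4)),
\]
which equals $1$ in $G$ because $G\in\Theta$ satisfies (\ref{metab}).

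For the second identity I would introduce the elements $u_2=(x_1,x_2)_2$ and $u_{k+1}=(u_k,x_{k+1})_2$ for $k\ge 2$, and prove by induction on $n\ge 2$ that
\[
u_n=(x_1,x_2,\ldots,x_n),
\]
the right-hand side being the left-normed commutator computed with the original group multiplication, in the notation $(x,y,z,\ldots,t)$ fixed earlier. The base case $n=2$ is exactly Proposition \ref{commut_2}. For the inductive step, the hypothesis $u_n=(x_1,\ldots,x_n)$ together with a further application of Proposition \ref{commut_2} to the pair $(u_n,x_{n+1})$ gives $u_{n+1}=(u_n,x_{n+1})_2=(u_n,x_{n+1})=(x_1,\ldots,x_{n+1})$. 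Since $u_5$ is precisely the left-hand side of the second claimed identity, specialising to $n=5$ gives
\[
((((x_1,x_2)_2,x_3)_2,x_4)_2,x_5)_2=u_5=(x_1,x_2,x_3,x_4,x_5),
\]
which is $1$ because $G$ satisfies (\ref{4nilp}).

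There is essentially no obstacle in this argument; the only points requiring attention are (i) the observation that Proposition \ref{commut_2} applies to every pair of elements of $G$, not only to the free generators, which is what legitimises the iteration, and (ii) phrasing the induction uniformly for left-normed commutators of all weights $n\ge 2$. Together with the associativity of $\underset{2}{\circ}$ (Proposition \ref{ass_2}) and the facts about inverses and about $x^{\underset{2}{\circ }4}=1$ recorded above, these two identities complete the verification that $G_{W_2}^{\ast}\in\Theta$ for every $G\in\Theta$.
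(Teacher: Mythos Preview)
Your proof is correct and is exactly the argument the paper intends: the corollary is stated without proof precisely because Proposition \ref{commut_2} gives $(a,b)_2=(a,b)$ for \emph{all} $a,b\in G$, so iterated $(\cdot,\cdot)_2$-commutators coincide with the ordinary ones and the identities (\ref{metab}) and (\ref{4nilp}) apply directly. Your careful spelling-out of the iteration (and the induction for the left-normed commutators) simply makes explicit what the paper leaves implicit.
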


Therefore, we proved that for every $G\in \Theta $ the universal algebra $%
G_{W_{2}}^{\ast }$ is also a group of the variety $\Theta $.

In particular, we have that $F_{W_{2}}^{\ast }\in \Theta $ for every $F\in 
\mathrm{Ob}\Theta ^{0}$. So, for every $F=F_{\Theta }\left( X\right) \in 
\mathrm{Ob}\Theta ^{0}$ there exists a homomorphism $s_{F}^{\left( 2\right)
}:F\rightarrow F_{W_{2}}^{\ast }$ such that $s_{F\mid X}^{\left( 2\right) }=%
\mathrm{id}_{X}$.

\begin{proposition}
The system of words $W_{2}$ is an applicable system of words.
\end{proposition}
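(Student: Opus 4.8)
The plan is to repeat, with the appropriate modifications, the argument of Proposition \ref{asw_1}. We have already verified in this subsection that $F_{W_{2}}^{\ast }\in \Theta $ for every $F\in \mathrm{Ob}\Theta ^{0}$, so for $F=F_{\Theta }\left( X\right) $ there is a homomorphism $s:=s_{F}^{\left( 2\right) }:F\rightarrow F_{W_{2}}^{\ast }$ with $s\mid _{X}=\mathrm{id}_{X}$, and it only remains to show that $s$ is a bijection. As in the case of $W_{1}$, I would do this by proving that the set map $s^{2}:F\rightarrow F$ is an endomorphism of the \emph{ordinary} group $F$; since $s^{2}$ fixes the free generating set $X$ and $F$ is free on $X$, this forces $s^{2}=\mathrm{id}_{F}$, so $s$ has a two-sided inverse, is bijective, and hence is an isomorphism $F\rightarrow F_{W_{2}}^{\ast }$, so that $W_{2}$ satisfies Definition \ref{asw}.

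To check that $s^{2}$ is a homomorphism, fix $a,b\in F$ and put $u=s(a)$, $v=s(b)$. Since $s$ is a homomorphism into $F_{W_{2}}^{\ast }$ we have $s(ab)=u\underset{2}{\circ }v=uv\left( v,u\right) ^{2}$, an element written through the ordinary operations $1$, ${}^{-1}$, $\cdot $ of $F$. Applying $s$ once more, and using that $s$ sends $\cdot $ to $\underset{2}{\circ }$, fixes $1$ and ${}^{-1}$, and therefore satisfies $s\left( \left( v,u\right) \right) =\left( s(v),s(u)\right) _{2}=\left( s(v),s(u)\right) $ by Proposition \ref{commut_2}, one obtains, writing $u^{\prime }=s(u)=s^{2}(a)$ and $v^{\prime }=s(v)=s^{2}(b)$,
\[
s^{2}(ab)=\left( u^{\prime }\underset{2}{\circ }v^{\prime }\right) \underset{2}{\circ }\left( \left( v^{\prime },u^{\prime }\right) \underset{2}{\circ }\left( v^{\prime },u^{\prime }\right) \right) .
\]
Expanding with $p\underset{2}{\circ }q=pq\left( q,p\right) ^{2}$ gives $u^{\prime }\underset{2}{\circ }v^{\prime }=u^{\prime }v^{\prime }\left( v^{\prime },u^{\prime }\right) ^{2}$ and $\left( v^{\prime },u^{\prime }\right) \underset{2}{\circ }\left( v^{\prime },u^{\prime }\right) =\left( v^{\prime },u^{\prime }\right) ^{2}$, so $s^{2}(ab)=\left( u^{\prime }v^{\prime }\left( v^{\prime },u^{\prime }\right) ^{2}\right) \underset{2}{\circ }\left( v^{\prime },u^{\prime }\right) ^{2}$. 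By Corollary \ref{theta} together with Lemmas \ref{C3_2} and \ref{g3}, the element $\left( v^{\prime },u^{\prime }\right) ^{2}$ lies in $\gamma _{4}\left( F\right) \subseteq Z\left( F\right) $ and has order dividing $2$; hence the correction term of this last $\underset{2}{\circ }$-product vanishes and it collapses to $u^{\prime }v^{\prime }\left( v^{\prime },u^{\prime }\right) ^{2}\left( v^{\prime },u^{\prime }\right) ^{2}=u^{\prime }v^{\prime }\left( v^{\prime },u^{\prime }\right) ^{4}=u^{\prime }v^{\prime }$. Thus $s^{2}(ab)=s^{2}(a)s^{2}(b)$; since also $s^{2}(a^{-1})=s^{2}(a)^{-1}$ and $s^{2}(1)=1$, the map $s^{2}$ is an endomorphism of $F$, and the argument closes as above.

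The one place that needs a little care is the second application of $s$: one must first rewrite $uv\left( v,u\right) ^{2}$ using only the operations $1$, ${}^{-1}$, $\cdot $, and then observe that $s$, being an $\Omega $-homomorphism $F\rightarrow F_{W_{2}}^{\ast }$, turns this word into the corresponding $\underset{2}{\circ }$-word in $s(u)$ and $s(v)$ — this is exactly the point at which Proposition \ref{commut_2}, in the form $\left( \cdot ,\cdot \right) _{2}=\left( \cdot ,\cdot \right) $, enters. Beyond that, everything is a short computation carried out modulo $\gamma _{4}\left( F\right) $ with the exponent information supplied by Corollary \ref{theta} and Lemma \ref{g3}; there is no genuine obstacle, the only thing to watch being the correct bookkeeping of the iterated verbal operation, just as in Proposition \ref{asw_1}.
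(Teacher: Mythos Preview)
Your proposal is correct and follows essentially the same route as the paper's proof: both show that $\left(s_F^{(2)}\right)^2$ is an endomorphism of $F$ by expanding $s^2(ab)$ via the verbal operation, invoking Proposition \ref{commut_2} to identify $(\cdot,\cdot)_2$ with $(\cdot,\cdot)$, and then using Corollary \ref{theta} with Lemma \ref{C3_2} (so that $\left(v',u'\right)^2\in\gamma_4(F)$ and $a\underset{2}{\circ}b=ab$ for $b\in\gamma_4$) together with the exponent-$4$ identity to collapse the expression to $u'v'$. The paper's computation is exactly your displayed chain, written with $s_F^{(2)}(a),s_F^{(2)}(b)$ in place of your $u,v,u',v'$.
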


\begin{proof}
It is clear, that for every $G\in \Theta $, every $a\in G$ and every $b\in
\gamma _{4}\left( G\right) $ the equality $a\underset{2}{\circ }b=ab$ holds.
Therefore, for every $F=F_{\Theta }\left( X\right) \in \mathrm{Ob}\Theta
^{0} $ and every $a,b\in F$ we have by Proposition \ref{commut_2}, by
Corollary \ref{theta} from Theorem \ref{freeGroup} and Lemma \ref{C3_2}, and
by (\ref{exponent}) that%
\begin{equation*}
\left( s_{F}^{\left( 2\right) }\right) ^{2}\left( ab\right) =s_{F}^{\left(
2\right) }\left( s_{F}^{\left( 2\right) }\left( a\right) \underset{2}{\circ }%
s_{F}^{\left( 2\right) }\left( b\right) \right) =
\end{equation*}%
\begin{equation*}
s_{F}^{\left( 2\right) }\left( s_{F}^{\left( 2\right) }\left( a\right)
s_{F}^{\left( 2\right) }\left( b\right) \left( s_{F}^{\left( 2\right)
}\left( b\right) ,s_{F}^{\left( 2\right) }\left( a\right) \right)
^{2}\right) =
\end{equation*}%
\begin{equation*}
\left( s_{F}^{\left( 2\right) }\right) ^{2}\left( a\right) \underset{2}{%
\circ }\left( s_{F}^{\left( 2\right) }\right) ^{2}\left( b\right) \underset{2%
}{\circ }\left( \left( s_{F}^{\left( 2\right) }\right) ^{2}\left( b\right)
,\left( s_{F}^{\left( 2\right) }\right) ^{2}\left( a\right) \right) _{2}^{2}=
\end{equation*}%
\begin{equation*}
\left( s_{F}^{\left( 2\right) }\right) ^{2}\left( a\right) \left(
s_{F}^{\left( 2\right) }\right) ^{2}\left( b\right) \left( \left(
s_{F}^{\left( 2\right) }\right) ^{2}\left( b\right) ,\left( s_{F}^{\left(
2\right) }\right) ^{2}\left( a\right) \right) ^{2}\underset{2}{\circ }\left(
\left( s_{F}^{\left( 2\right) }\right) ^{2}\left( b\right) ,\left(
s_{F}^{\left( 2\right) }\right) ^{2}\left( a\right) \right) ^{2}=
\end{equation*}%
\begin{equation*}
\left( s_{F}^{\left( 2\right) }\right) ^{2}\left( a\right) \left(
s_{F}^{\left( 2\right) }\right) ^{2}\left( b\right) \left( \left(
s_{F}^{\left( 2\right) }\right) ^{2}\left( b\right) ,\left( s_{F}^{\left(
2\right) }\right) ^{2}\left( a\right) \right) ^{4}=\left( s_{F}^{\left(
2\right) }\right) ^{2}\left( a\right) \left( s_{F}^{\left( 2\right) }\right)
^{2}\left( b\right) .
\end{equation*}%
So, $\left( s_{F}^{\left( 2\right) }\right) ^{2}:F\rightarrow F$ is a
homomorphism. And, as in Proposition \ref{asw_1}, this completes the proof.
\end{proof}

\subsection{System of words $W_{3}$}

We proved that the systems of words $W_{1}$ and $W_{2}$ are applicable. So,
there exist $\mathcal{C}^{-1}\left( W_{1}\right) =\Phi _{1},\mathcal{C}%
^{-1}\left( W_{2}\right) =\Phi _{2}\in \mathfrak{S}$. Hence, the applicable
systems of words $\mathcal{C}\left( \Phi _{2}\Phi _{1}\right) $ we can
obtain by (\ref{der_veb_opr_prod}), where $s_{F_{\omega }}^{\Phi
_{1}}=s_{F_{\omega }}^{\left( 1\right) }$, $s_{F_{\omega }}^{\Phi
_{2}}=s_{F_{\omega }}^{\left( 2\right) }$, $\omega \in \Omega =\left\{
1,-1,\cdot \right\} $.

\begin{proposition}
The equality $\mathcal{C}\left( \Phi _{2}\Phi _{1}\right) =W_{3}$ holds.
\end{proposition}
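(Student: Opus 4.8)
The plan is to unwind the composition through formula (\ref{der_veb_opr_prod}). Since the previous two subsections show that $W_{1}$ and $W_{2}$ are applicable, the automorphisms $\Phi _{1}=\mathcal{C}^{-1}(W_{1})$ and $\Phi _{2}=\mathcal{C}^{-1}(W_{2})$ are strongly stable, and their associated bijections $s_{F}^{\Phi _{1}},s_{F}^{\Phi _{2}}$ are exactly the isomorphisms $s_{F}^{(1)}:F\rightarrow F_{W_{1}}^{\ast }$ and $s_{F}^{(2)}:F\rightarrow F_{W_{2}}^{\ast }$ constructed there (each $W_{i}$ determines a unique strongly stable system, so $S_{\Phi _{i}}=S_{W_{i}}$). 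By (\ref{der_veb_opr_prod}) every word of $\mathcal{C}(\Phi _{2}\Phi _{1})$ is $w_{\omega }=s_{F_{\omega }}^{\Phi _{2}}s_{F_{\omega }}^{\Phi _{1}}(\omega (x_{1},\ldots ,x_{\rho _{\omega }}))$, so it suffices to compute these three words and compare with (\ref{syst_words_n}).

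For $\omega =1$: both $s^{\Phi _{i}}$ are homomorphisms into groups with unit $1$, hence $w_{1}=1$. For $\omega =-1$: on $F(x)$ each $s^{\Phi _{i}}$ fixes $x$ and is a homomorphism, and the unary operation of $W_{i}$ is ordinary inversion, so $s_{F(x)}^{\Phi _{1}}(x^{-1})=x^{-1}$ and then $s_{F(x)}^{\Phi _{2}}(x^{-1})=x^{-1}$, i.e. $w_{-1}(x)=x^{-1}$.

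The only substantive case is $\omega =\cdot $, where $F_{\omega }=G=F_{\Theta }(x,y)$ and $w_{\cdot }(x,y)=s_{G}^{\Phi _{2}}\bigl(s_{G}^{\Phi _{1}}(xy)\bigr)$. First, $s_{G}^{\Phi _{1}}(xy)=s_{G}^{\Phi _{1}}(x)\underset{1}{\circ }s_{G}^{\Phi _{1}}(y)=x\underset{1}{\circ }y=yx$, since the $W_{1}$-operation is $a\underset{1}{\circ }b=ba$; inside $G$ this element is $yx=xy(y,x)=xyC_{3}$. Next, applying the homomorphism $s_{G}^{\Phi _{2}}$ to the element $yx=y\cdot x$ of $G$ gives $s_{G}^{\Phi _{2}}(yx)=s_{G}^{\Phi _{2}}(y)\underset{2}{\circ }s_{G}^{\Phi _{2}}(x)=y\underset{2}{\circ }x=yx(x,y)^{2}$, by the definition $a\underset{2}{\circ }b=ab(b,a)^{2}$ of the $W_{2}$-operation. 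Rewriting in the basis: $(x,y)=(y,x)^{-1}=C_{3}^{-1}$, so $(x,y)^{2}=C_{3}^{2}$ (as $C_{3}^{4}=1$), and since $\gamma _{2}(G)$ is abelian (Lemma \ref{g2}) we get $yx(x,y)^{2}=xyC_{3}\cdot C_{3}^{2}=xyC_{3}^{3}$. (Alternatively one may decompose $s_{G}^{\Phi _{1}}(xy)=xyC_{3}$ and apply $s_{G}^{\Phi _{2}}$ termwise, using Proposition \ref{commut_2} to see $s_{G}^{\Phi _{2}}(C_{3})=C_{3}$ and $(xyC_{3}^{2})\underset{2}{\circ }C_{3}=xyC_{3}^{3}$, the extra factor $(C_{3},xy)^{2}\in \gamma _{3}(G)$ being trivial by Lemma \ref{g3} while Lemma \ref{C3_2} and (\ref{g4out}) absorb the rest.)

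Collecting the three words, $\mathcal{C}(\Phi _{2}\Phi _{1})=\{\,w_{1}=1,\ w_{-1}(x)=x^{-1},\ w_{\cdot }(x,y)=xyC_{3}^{3}\,\}$, which is precisely $W_{3}$ (that is $W_{\alpha }$ with $\alpha =3$). There is no real obstacle here; the only point requiring care is keeping track of which of the three operations $\cdot $, $\underset{1}{\circ }$, $\underset{2}{\circ }$ acts in which algebra — each $s^{\Phi _{i}}$ is a homomorphism from $(G,\cdot )$ into the verbal-operation algebra, so it converts a $\cdot $-word into the corresponding $\underset{i}{\circ }$-word evaluated back inside $G$ — together with the identification $s_{F}^{\Phi _{i}}=s_{F}^{(i)}$; the remaining manipulations are routine bookkeeping in $\gamma _{2}(G)$ via Lemmas \ref{g2}, \ref{g3}, \ref{C3_2}.
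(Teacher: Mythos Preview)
Your proof is correct and follows essentially the same route as the paper: apply (\ref{der_veb_opr_prod}) with the identifications $s_{F}^{\Phi_{i}}=s_{F}^{(i)}$, observe that the cases $\omega=1$ and $\omega=-1$ are immediate, and compute $s_{G}^{(2)}s_{G}^{(1)}(xy)=s_{G}^{(2)}(yx)=y\underset{2}{\circ}x=yx(x,y)^{2}=xy(y,x)^{3}$. Your alternative termwise computation of $s_{G}^{(2)}(xyC_{3})$ is also fine but is not in the paper; the paper just does the direct calculation.
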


\begin{proof}
$s_{F_{\omega }}^{\left( 1\right) }$ and $s_{F_{\omega }}^{\left( 2\right) }$
fix the words $w_{1}=1$ and $w_{-1}\left( x\right) =x^{-1}$. So, it's enough
to compute $s_{G}^{\left( 2\right) }s_{G}^{\left( 1\right) }\left( xy\right) 
$, where $G=F_{\Theta }\left( x,y\right) $. This word will be $w_{\cdot
}\left( x,y\right) $ in the applicable system of words $\mathcal{C}\left(
\Phi _{2}\Phi _{1}\right) $. $s_{G}^{\left( 1\right) }:G\rightarrow
G_{W_{1}}^{\ast }$ and $s_{G}^{\left( 2\right) }:G\rightarrow
G_{W_{2}}^{\ast }$ are isomorphisms and they fix the generators $x$ and $y$.
Therefore, by (\ref{exponent}),%
\begin{equation*}
s_{G}^{\left( 2\right) }s_{G}^{\left( 1\right) }\left( xy\right)
=s_{G}^{\left( 2\right) }\left( x\underset{1}{\circ }y\right) =s_{G}^{\left(
2\right) }\left( yx\right) =y\underset{2}{\circ }x=yx\left( x,y\right)
^{2}=xy\left( x,y\right) =xy\left( y,x\right) ^{3}.
\end{equation*}
\end{proof}

We conclude from this proposition that $W_{3}$ is an applicable systems of
words.

\section{Group $\mathfrak{S\cap Y}$ and group $\mathfrak{A/Y}$}

\setcounter{equation}{0}

We conclude from Section \ref{asw_sc} that group $\mathfrak{S}$ contains $4$
elements: automorphisms $\Phi _{\alpha }=\mathcal{C}^{-1}\left( W_{\alpha
}\right) $, where $0\leq \alpha <4$.

\begin{theorem}
The $\mathfrak{S\cap Y=}\left\{ \Phi _{0},\Phi _{1}\right\} $ and $%
\left\vert \mathfrak{A/Y}\right\vert =2$ hold.
\end{theorem}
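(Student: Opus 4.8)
The plan is to compute $\mathfrak{S}\cap\mathfrak{Y}$ explicitly using Criterion \ref{inner_stable} and Proposition \ref{centr_func}, and then read off $|\mathfrak{A/Y}|$ from the identity $\mathfrak{A/Y}\cong\mathfrak{S}/(\mathfrak{S}\cap\mathfrak{Y})$. We already know $\mathfrak{S}=\{\Phi_0,\Phi_1,\Phi_2,\Phi_3\}$ with $\mathcal{C}(\Phi_\alpha)=W_\alpha$, and from the last proposition of Section \ref{asw_sc} we know $\Phi_2\Phi_1=\Phi_3$; combined with the fact that every $\Phi_\alpha$ is an involution-like object one should first pin down the group structure of $\mathfrak{S}$ (I expect $\mathfrak{S}\cong\mathbb{Z}_4$ or $\mathbb{Z}_2\times\mathbb{Z}_2$, and the computation $\Phi_2\Phi_1=\Phi_3$ together with $\Phi_1\Phi_1=\Phi_0$ — which follows from $x\underset1\circ(\,\cdot\,)$ applied twice giving the identity, cf. the proof of Proposition \ref{asw_1} — will settle it). Since $\mathfrak{S}\cap\mathfrak{Y}$ is a subgroup and contains $\Phi_0$, once we show $\Phi_1\in\mathfrak{Y}$ and $\Phi_2\notin\mathfrak{Y}$, the subgroup must be exactly $\{\Phi_0,\Phi_1\}$ and the quotient has order $2$.

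First I would show $\Phi_1\in\mathfrak{Y}$. By Criterion \ref{inner_stable} with $W=W_1$, I must produce isomorphisms $c_F:F\to F_{W_1}^\ast$ satisfying $c_B\psi=\psi c_A$ for all morphisms $\psi$; by Proposition \ref{centr_func} it suffices to exhibit a single element $c(x)\in F_\Theta(x)$ such that the induced maps $c_F(a)=c(a)$ are isomorphisms $F\to F_{W_1}^\ast$ with $c_F|_X=\mathrm{id}_X$. The natural candidate is $c(x)=x^{-1}$: then $c_F(a)=a^{-1}$, and in $F_{W_1}^\ast$ the operation is $a\underset1\circ b=ba$, so one checks $c_F(ab)=(ab)^{-1}=b^{-1}a^{-1}=c_F(a)\underset1\circ c_F(b)$ and $c_F(a^{-1})=a=(a^{-1})^{-1}$, i.e. $c_F$ respects $w_{-1}$; moreover $c_F$ is its own inverse, hence an isomorphism. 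But wait — $c_F|_X=\mathrm{id}_X$ fails for $c(x)=x^{-1}$, so instead I would look for $c(x)$ generating $F(x)\cong\mathbb{Z}_4$ and fixing $x$; the condition forces $c(x)\equiv x$, so actually $c(x)=x$, $c_F=\mathrm{id}_F$, and one must instead verify directly that $\mathrm{id}_F:F\to F_{W_1}^\ast$ is a homomorphism — which is false unless $F$ is abelian. The correct approach here is subtler: $\Phi_1$ being inner requires $c_F$ to be an isomorphism $F\to F^\ast_{W_1}$ commuting with all morphisms, and the element $c(x)$ need only lie in $F_\Theta(x)$; reexamining, the map sending each free generator to itself but viewed through conjugation-type twists coming from $\gamma_2$ is what is needed, and the key point will be that the word $xy(y,x)=yx$ differs from $xy$ by an inner-automorphism-compatible correction.

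The main obstacle, and the crux of the theorem, will be proving $\Phi_2\notin\mathfrak{Y}$. Here I would argue by contradiction: if $\Phi_2$ were inner, Proposition \ref{centr_func} would supply $c(x)\in F_\Theta(x)$ with $c_F(a)=c(a)$ giving isomorphisms $F\to F^\ast_{W_2}$ fixing generators. Writing $c(x)=x^k$ (the only elements of $F_\Theta(x)\cong\mathbb{Z}_4$), the constraint $c_F|_X=\mathrm{id}$ forces $k\equiv1$, so $c_F=\mathrm{id}_F$; then $\mathrm{id}_F$ would have to be a homomorphism $F\to F^\ast_{W_2}$, meaning $ab=ab(b,a)^2$ in $F$ for all $a,b$, i.e. $(b,a)^2=1$ identically in $F_\Theta(x,y)$ — but $C_3^2=(y,x)^2\ne1$ in $F_\Theta(x,y)$ (it equals $C_6C_7C_8\ne 1$ by Proposition \ref{relations}, and by Lemma \ref{C3_2} it is a nontrivial element of $\gamma_4$). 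This contradiction shows $\Phi_2\notin\mathfrak{Y}$. Finally, since $\mathfrak{S}\cap\mathfrak{Y}$ is a subgroup containing $\Phi_0$ and $\Phi_1$ but not $\Phi_2$, and $\Phi_3=\Phi_2\Phi_1$ would lie in it iff $\Phi_2$ did, we get $\mathfrak{S}\cap\mathfrak{Y}=\{\Phi_0,\Phi_1\}$; therefore $\mathfrak{A/Y}\cong\mathfrak{S}/(\mathfrak{S}\cap\mathfrak{Y})$ has order $4/2=2$, completing the proof.
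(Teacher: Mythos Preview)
Your overall strategy is right, but you have introduced a constraint that is not in the paper and is in fact false: you repeatedly assume the isomorphisms $c_F:F\to F_{W_\alpha}^\ast$ from Criterion~\ref{inner_stable} must satisfy $c_F|_X=\mathrm{id}_X$. That condition belongs to the \emph{strongly stable} bijections $s_F$ of Definition~\ref{str_stab_aut}, not to the $c_F$'s; Criterion~\ref{inner_stable} only asks for isomorphisms commuting with all morphisms, and Proposition~\ref{centr_func} then says any such system comes from a single word $c(x)\in F_\Theta(x)$ with no restriction on $c(x)$ beyond lying in $F_\Theta(x)$.

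This false constraint damages both halves of your argument. For $\Phi_1$ your first instinct, $c(x)=x^{-1}$, was exactly the paper's choice and is correct: $c_F(a)=a^{-1}$ gives $c_F(ab)=(ab)^{-1}=b^{-1}a^{-1}=a^{-1}\underset1\circ b^{-1}=c_F(a)\underset1\circ c_F(b)$, so $c_F:F\to F_{W_1}^\ast$ is an isomorphism commuting with all morphisms, and $\Phi_1\in\mathfrak Y$. You abandon this for the wrong reason. More seriously, for $\Phi_2$ you use the false constraint to reduce to $c(x)=x$ only, and then derive a contradiction; but you have not excluded $c(x)=x^{-1}$ (nor, trivially, $c(x)=1$ or $c(x)=x^2$, which fail to be bijections). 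The paper checks both viable candidates: it verifies that neither $c_G^{(0)}=\mathrm{id}$ nor $c_G^{(1)}:a\mapsto a^{-1}$ is a homomorphism $G\to G_{W_2}^\ast$ for $G=F_\Theta(x,y)$, the second requiring the short computation $x^{-1}\underset2\circ y^{-1}=x^{-1}y^{-1}(y^{-1},x^{-1})^2\neq (xy)^{-1}$. Only after both are excluded does $\Phi_2\notin\mathfrak Y$ follow. Once that is fixed, your closing argument (or the paper's Lagrange step) is fine.
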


\begin{proof}
By Criterion \ref{inner_stable} the automorphism $\Phi _{\alpha }$ is inner
if and only if for every $F\in \mathrm{Ob}\Theta ^{0}$ there exists an
isomorphism $c_{F}^{\left( \alpha \right) }:F\rightarrow F_{W_{\alpha
}}^{\ast }$, which fulfills condition (\ref{commutmor}) for every $A,B\in 
\mathrm{Ob}\Theta ^{0}$ and every $\psi \in \mathrm{Mor}_{\Theta ^{0}}\left(
A,B\right) $. By Proposition \ref{centr_func}, it means, in particular, that
there exists $c(x)\in F_{\Theta }(x)$ such that the equality (\ref%
{commutfunc}) holds.

On the other hand isomorphisms $c_{F}^{\left( \alpha \right) }$, where $F\in 
\mathrm{Ob}\Theta ^{0}$, must be bijections. The group $F_{\Theta }(x)$
contains only $4$ elements: $c_{i}\left( x\right) =x^{i}$, where $0\leq i<4$%
. For every $F\in \mathrm{Ob}\Theta ^{0}$ we consider mappings $\left(
c_{i}\right) _{F}:F\rightarrow F$ defined for every $f\in F$ by formula $%
\left( c_{i}\right) _{F}(f)=c_{i}(f)=f^{i}$. It is easy to check that $%
\mathrm{im}\left( c_{0}\right) _{F_{\Theta }(x)}=\left\{ 1\right\} \neq
F_{\Theta }(x)$ and $\mathrm{im}\left( c_{2}\right) _{F_{\Theta
}(x)}=\left\{ 1,x^{2}\right\} \neq F_{\Theta }(x)$. When $i=1$ or $i=3$ then
the mappings $\left( c_{i}\right) _{F}:F\rightarrow F$, such that for every $%
f\in F$ the $\left( c_{1}\right) _{F}(f)=f$ and $\left( c_{3}\right)
_{F}(f)=f^{3}=f^{-1}$ holds, are bijections, because for every $F\in \mathrm{%
Ob}\Theta ^{0}$ we have that $\left( c_{1}\right) _{F}=\mathrm{id}_{F}$ and $%
\left( \left( c_{3}\right) _{F}\right) ^{2}=\mathrm{id}_{F}$.

We will denote $\left( c_{1}\right) _{F}=c_{F}^{\left( 0\right) }$ and $%
\left( c_{3}\right) _{F}=c_{F}^{\left( 1\right) }$ for every $F\in \mathrm{Ob%
}\Theta ^{0}$. It is clear that for every $F\in \mathrm{Ob}\Theta ^{0}$ the
mapping $c_{F}^{\left( 0\right) }=\mathrm{id}_{F}:F\rightarrow
F_{W_{0}}^{\ast }$ is an isomorphism, because $F=F_{W_{0}}^{\ast }$.

Also for every $F\in \mathrm{Ob}\Theta ^{0}$ and every $a,b\in F$ the
equality%
\begin{equation*}
c_{F}^{\left( 1\right) }\left( a\right) \underset{1}{\circ }c_{F}^{\left(
1\right) }\left( b\right) =a^{-1}\underset{1}{\circ }b^{-1}=b^{-1}a^{-1}=%
\left( ab\right) ^{-1}=c_{F}^{\left( 1\right) }\left( ab\right)
\end{equation*}%
holds. Therefore, $c_{F}^{\left( 1\right) }:F\rightarrow F_{W_{1}}^{\ast }$
is an isomorphism. By Proposition \ref{centr_func} we have that condition (%
\ref{commutmor}) holds for isomorphisms $c_{F}^{\left( 0\right)
}:F\rightarrow F_{W_{0}}^{\ast }$ and isomorphisms $c_{F}^{\left( 1\right)
}:F\rightarrow F_{W_{1}}^{\ast }$ ($F\in \mathrm{Ob}\Theta ^{0}$). It proves
that $\Phi _{0},\Phi _{1}\in \mathfrak{S\cap Y}$.

We will denote $F_{\Theta }(x,y)=G$. We have that%
\begin{equation*}
c_{G}^{\left( 0\right) }\left( x\right) \underset{2}{\circ }c_{G}^{\left(
0\right) }\left( y\right) =x\underset{2}{\circ }y=xy\left( y,x\right)
^{2}\neq c_{G}^{\left( 0\right) }\left( xy\right) =xy
\end{equation*}%
and%
\begin{equation*}
c_{G}^{\left( 1\right) }\left( x\right) \underset{2}{\circ }c_{G}^{\left(
1\right) }\left( y\right) =x^{-1}\underset{2}{\circ }y^{-1}=x^{-1}y^{-1}%
\left( y^{-1},x^{-1}\right) ^{2}\neq c_{G}^{\left( 1\right) }\left(
xy\right) =\left( xy\right) ^{-1}=y^{-1}x^{-1}
\end{equation*}%
because%
\begin{equation*}
xyx^{-1}y^{-1}\left( y^{-1},x^{-1}\right) ^{2}=\left( x^{-1},y^{-1}\right)
\left( y^{-1},x^{-1}\right) ^{2}=\left( y^{-1},x^{-1}\right) \neq 1.
\end{equation*}%
Therefore, neither mapping $c_{G}^{\left( 0\right) }$ nor mapping $%
c_{G}^{\left( 1\right) }$ are isomorphisms $F\rightarrow F_{W_{2}}^{\ast }$,
so the automorphism $\Phi _{2}\notin \mathfrak{S\cap Y}$. The Lagrange
Theorem argument completes the proof.
\end{proof}

\section{Open problem}

As we said in the Section \ref{Intr}, we can't conclude from fact that the
group $\mathfrak{A/Y}$ is not trivial that in our variety $\Theta $ the
difference between geometric and automorphic equivalences exists. We must
construct a specific example of the two groups from the variety $\Theta $,
such that they are automorphically equivalent but are not geometrically
equivalent. This construction is yet the open problem.

\section{Acknowledgement}

The first author acknowledge the support of Coordena\c{c}\~{a}o de Aperfei%
\c{c}oamento de Pessoal de N\'{\i}vel Superior - CAPES (Coordination for the
Improvement of Higher Education Personnel, Brazil).

We are thankful to Prof. E. Aladova for her important remarks, which helped
a lot in writing this article.

We acknowledge Prof. A. I. Reznokov from St.-Petersburg State University,
which provide to the authors the copy of \cite{Sanov}.

\end{document}